\documentclass{article}
\usepackage[margin=1in]{geometry}

\usepackage{amsmath}
\usepackage{amsfonts}
\usepackage{amssymb}
\usepackage{amsthm}
\usepackage{graphics}

\usepackage{marvosym}
\newcommand{\envelope}{\raisebox{-.5pt}{\scalebox{1.45}{\Letter}}\kern-1.7pt}

\usepackage[margin=1in]{geometry}

\newtheorem{thm}{Theorem}[section]
\newtheorem{prop}[thm]{Proposition}

\newtheorem{lem}[thm]{Lemma}

\newtheorem{defn}[thm]{Definition}

\newtheorem{conj}{Conjecture}

\newtheorem{perty}{Property}

\newcommand{\A}{\mathcal{A}}
\newcommand{\N}{\mathbb{N}}
\newcommand{\Z}{\mathbb{Z}}
\newcommand{\Q}{\mathbb{Q}}
\newcommand{\G}{\mathcal{G}}

\newcommand{\ol}[1]{\overline{#1}}

\newcommand{\la}{\langle}
\newcommand{\ra}{\rangle}

\DeclareMathOperator*{\bigdoublewedge}{\bigwedge\mkern-15mu\bigwedge}
\DeclareMathOperator*{\bigdoublevee}{\bigvee\mkern-15mu\bigvee}

\title{Scott sentences for certain groups}
\author{Julia F.\ Knight \and Vikram Saraph}

\begin{document}
\maketitle

\begin{abstract}
We give Scott sentences for certain computable groups, and we use index set calculations as a way of checking that our Scott sentences are as simple as possible. We consider finitely generated groups and torsion-free abelian groups of finite rank.  For both kinds of groups, the computable ones all have computable $\Sigma_3$ Scott sentences.  Sometimes we can do better.  In fact, the computable finitely generated groups that we have studied all have Scott sentences that are ``computable $d$-$\Sigma_2$'' (the conjunction of a computable $\Sigma_2$ sentence and a computable $\Pi_2$ sentence).  In \cite{freegroups}, this was shown for the finitely generated free groups. Here we show it for all finitely generated abelian groups, and for the infinite dihedral group.\footnote{Since this paper was written (in 2013), there have been further results obtained.  Ho \cite{Ho} showed that the computable finitely generated groups in several further classes also have computable $d$-$\Sigma_2$ Scott sentences.  He also showed that for a finitely generated computable group $G$, if there is a computable $\Sigma_2$ formula that defines a non-empty set of generating tuples, then there is a computable $d$-$\Sigma_2$ Scott sentence.  
Still more recently, Harrison-Trainor and Ho \cite{HH} characterized the finitely generated groups that have a $d$-$\Sigma_2$ Scott sentence as those that do not have a generating tuple $\bar{a}$ and a further tuple $\bar{b}$, not a generating tuple, such that all existential formulas true of $\bar{b}$ are true of $\bar{a}$.  They gave an example of a computable finitely generated group that does not have a $d$-$\Sigma_2$ Scott sentence.  Alvir, Knight, and McCoy \cite{AKM} gave a different characterization.  For a finitely generated group $G$, there is a $d$-$\Sigma_2$ Scott sentence if and only if for every (some) generating tuple $\bar{a}$, the orbit of $\bar{a}$ is defined by a $\Pi_1$ formula.  For a computable finitely generated group, there is a computable $d$-$\Sigma_2$ Scott sentence if and only if for every (some) generating tuple $\bar{a}$, the orbit of $\bar{a}$ is defined by a computable $\Pi_1$ formula.}  Among the computable torsion-free abelian groups of finite rank, we focus on those of rank $1$.  These are exactly the additive subgroups of $\mathbb{Q}$.  We show that for some of these groups, the computable $\Sigma_3$ Scott sentence is best possible, while for others, there is a computable $d$-$\Sigma_2$ Scott sentence.\end{abstract}

\section{Introduction}

We can describe any finite group, or other finite structure for a finite language, up to isomorphism, by a single elementary (finitary) first order sentence.  However, for an infinite structure $\mathcal{A}$, the complete elementary first order theory of $\mathcal{A}$ always has models not isomorphic to $\mathcal{A}$.  Some countable groups, such as the infinite group in which every nonidentity element has order $p$, have an $\aleph_0$-categorical elementary first order theory.  However, for most countable groups, the theory has non-isomorphic countable models.  Sela \cite{S} showed that all non-abelian free groups have the same elementary first order theory (see also work of Kharlampovich and Myasnikov \cite{KM}).  

In this paper, we use infinitary formulas to describe our groups.  Recall that for language $L$, the logic $L_{\omega_1\omega}$ allows countably infinite conjunctions and disjunctions.  Scott \cite{scott} showed that for any countable structure $\mathcal{A}$ for a countable language $L$, there is a sentence of $L_{\omega_1\omega}$ whose countable models are exactly the isomorphic copies of $\mathcal{A}$.  Such a sentence is called a \emph{Scott sentence} for $\mathcal{A}$.  
The \emph{computable infinitary formulas} are formulas of $L_{\omega_1\omega}$ in which the infinite disjunctions and conjunctions are over computably enumerable (c.e.) sets.  We consider computable infinitary formulas to be comprehensible, even though they may be infinitely long.  

We consider only infinitary formulas in ``normal form'', with negations brought inside, next to the atomic formulas.  Computable infinitary formulas in normal form are classified as ``computable $\Sigma_\alpha$'' or ``computable $\Pi_\alpha$'', according to the number of alternations of 
$\bigdoublevee/(\exists)$ with $\bigdoublewedge/(\forall)$.     
\begin{enumerate}
\item $\varphi(\ol{x})$ is \emph{computable $\Sigma_0$} and \emph{computable $\Pi_0$} if it is finitary and quantifier-free;
\item for a computable ordinal $\alpha > 0$, 
\begin{enumerate}
\item  $\varphi(\ol{x})$ is \emph{computable $\Sigma_\alpha$} if it is a c.e.\  disjunction of formulas of the form $(\exists \ol{y}) \psi(\ol{x}, \ol{y})$, where each $\psi$ is computable $\Pi_\beta$ for some $\beta<\alpha$;
\item $\varphi(\ol{x})$ is $\Pi_\alpha$ if it is a c.e.\ conjunction of formulas of the form $(\forall \ol{y}) \psi(\ol{x}, \ol{y})$, where each $\psi$ is computable $\Sigma_\beta$ for some $\beta<\alpha$.
\end{enumerate}
\end{enumerate}

In computable structure theory, it is standard to identify a structure with its atomic diagram.  Thus, a structure $\A$ is \emph{computable} if the atomic diagram $D(\A)$ is computable.  This means that the universe is computable and the functions and relations are uniformly computable.  The usual language of groups has a binary operation symbol for the group operation, a unary operation symbol for the inverse, and a constant for the identity.  We note that if a group $\mathcal{G}$ has computable universe and computable group operation, then the inverse operation is also computable, so $\mathcal{G}$ is computable.  For a finitely generated group 
$\mathcal{G}$, being computable is the same as having solvable word problem \cite{WordProb}.  Henceforth, the groups that we consider are all computable unless we say otherwise.  

For a computable structure $\mathcal{A}$, the complexity of an ``optimal'' (i.e., simplest possible) Scott sentence provides an intrinsic measure of internal complexity of $\mathcal{A}$.  Not every computable structure has a computable infinitary Scott sentence.  One example is the \emph{Harrison $p$-group}, where this is a computable abelian $p$-group of length $\omega_1^{CK}$ with all infinite Ulm invariants, and with a divisible part of infinite dimension \cite{GHKS}.  Many familiar kinds of computable structures \emph{do} have computable infinitary Scott sentences.  This is true of the groups that we consider.  To show that the Scott sentences we find are optimal, we calculate the complexity of the ``index set''.     

\begin{defn} [Index set]\

\begin{enumerate}

\item  For a computable structure $\mathcal{A}$, the \emph{index set}, denoted by $I(\mathcal{A})$, is the set of all indices $e \in \N$ such that 
$\varphi_e = \chi_{D(\mathcal{B})}$ for some $\mathcal{B}\cong\mathcal{A}$.  

\item  For a class $K$ of structures (for a fixed language), closed under isomorphism, the \emph{index set}, denoted by $I(K)$, is the set of all $e$ such that 
$\varphi_e = \chi_{D(\mathcal{B})}$ for some $\mathcal{B}\in K$. 

\end{enumerate} 

\end{defn}

For a language $L$, we write $Mod(L)$ for the class of $L$-structures.  For a sentence $\varphi$, finitary or infinitary, we write $Mod(\varphi)$ for the class of models of $\varphi$.  For computable structures, satisfaction of computable $\Sigma_\alpha$, or computable $\Pi_\alpha$, formulas is $\Sigma^0_\alpha$, or $\Pi_\alpha$, with all possible uniformity \cite{AK}.  For a computable language $L$, $I(Mod(L))$ is $\Pi^0_2$.  For a computable $\Sigma_\alpha$ $L$-sentence $\varphi$, $I(Mod(\varphi))$ has $\Sigma^0_\alpha$ intersection with $I(Mod(L))$.  Similarly, for a computable $\Pi_\alpha$ $L$-sentence $\varphi$, $I(Mod(\varphi))$ has $\Pi^0_\alpha$ intersection with $I(Mod(L))$.  In particular, if 
$\mathcal{A}$ has a computable $\Sigma_3$ Scott sentence, then $I(\mathcal{A})$ is $\Sigma^0_3$.  If we can show that $I(\mathcal{A})$ is $m$-complete $\Sigma^0_3$, then we know that there is no simpler Scott sentence.  Similarly, if $\mathcal{A}$ has a Scott sentence that is computable $d$-$\Sigma_2$, the conjunction of a computable $\Sigma_2$ sentence with one that is computable $\Pi_2$, then $I(\mathcal{A})$ is $d$-$\Sigma^0_2$.  If we can show that $I(\mathcal{A})$ is $m$-complete $d$-$\Sigma^0_2$, then this Scott sentence is optimal.    

Scott sentences and index sets have been studied for a number of different kinds of structures \cite{idx1}, \cite{idx2}, \cite{idx3}, \cite{idx4}, \cite{idx5}, \cite{idx6}, \cite{idx7}, \cite{idx8}, \cite{idx9}, \cite{idxsets}, \cite{freegroups}, \cite{DescribingII}.  In \cite{freegroups} and \cite{DescribingII}, as in the current paper, the main goal was to find optimal Scott sentences.  In some of the other papers, such as \cite{idxsets}, the main goal was to calculate the complexity of the index set, but finding an optimal Scott sentence was an essential step.  The following thesis was stated in \cite{idxsets}.

\begin{quote}

For a given computable structure $\mathcal{A}$, to calculate the precise complexity of $I(\mathcal{A})$, we need a good description of 
$\mathcal{A}$, and once we have an ``optimal" description (a Scott sentence), the complexity of $I(\mathcal{A})$ will match that of the description.
\end{quote}

In \cite{idxsets}, there are results assigning to each computable reduced abelian $p$-group $G$ of length less than $\omega^2$ a computable infinitary Scott sentence, and in each case, it is shown that the index set is $m$-complete at the level of complexity matching that of the Scott sentence.  Hence, there can be no simpler Scott sentence.  In \cite{freegroups}, it is shown that the free group $F_n$ on $n$ generators has a Scott sentence that is computable $d$-$\Sigma_2$.  It is also shown that $I(F_n)$ is $m$-complete $d$-$\Sigma^0_2$, so there is no simpler Scott sentence.  For the free group $F_\infty$ on 
$\aleph_0$ generators, there is a computable $\Pi_4$ Scott sentence in \cite{freegroups}.  In \cite{DescribingII}, it is shown that $I(F_\infty)$ is $m$-complete $\Pi^0_4$, so there is no simpler Scott sentence.  In these results, the most obvious Scott sentences were not optimal.  Belief in the thesis above led the authors to keep looking until they found an optimal Scott sentences.         

In this paper, we shall give optimal Scott sentences for some further finitely generated groups, and for some torsion-free abelian groups of finite rank.  We show that for both kinds of groups, there is always a computable $\Sigma_3$ Scott sentence.  For all of the finitely generated groups for which we have precise results, there is a computable $d$-$\Sigma_2$ Scott sentence.  We prove that this is the case for all finitely generated abelian groups and for the dihedral group---both quite different from free groups. For the torsion-free abelian groups, we focus on those of rank $1$---subgroups of the additive group of rationals.  For some computable subgroups of $\mathbb{Q}$, the index set is $m$-complete $\Sigma^0_3$, so our computable $\Sigma_3$ Scott sentence is optimal.  For others, we have a computable $d$-$\Sigma_2$ Scott sentence.  For certain computable subgroups of $\mathbb{Q}$, we can show that the index set is $d$-$\Sigma^0_2$, and we are unable to give a computable $d$-$\Sigma_2$ Scott sentence.\footnote{After the present paper was written, in \cite{KM}, Knight and McCoy showed that the group in question does not have computable $d$-$\Sigma_2$ Scott sentence.} 

The results on finitely generated groups are in Section 2. The results on torsion-free abelian groups of finite rank are in Section 3.  In Section 4, we give some open questions.  For background in computability, see \cite{soare}, and for background on computable structures and computable infinitary formulas, see \cite{AK}.           

\section{Finitely generated groups}

In this section, we consider Scott sentences for certain finitely generated groups.  

\begin{prop}
\label{Sigma_3}

Every computable finitely generated group $G$ has a computable $\Sigma_3$ Scott sentence.

\end{prop}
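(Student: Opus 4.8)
The plan is to build the Scott sentence around a single fixed generating tuple. Fix generators $\bar{a} = (a_1,\dots,a_n)$ of $G$, and let $R$ be the set of group words $w$ (in $n$ variables) with $w(\bar{a}) = e$. Since $G$ is a computable finitely generated group, its word problem is solvable, so $R$ and its complement among words are computable, hence c.e. The structural fact I will exploit is that the isomorphism type of a finitely generated group is determined by this kernel: if $\bar{x}$ is a generating tuple of a group $H$ with $\{w : w(\bar{x}) = e\} = R$, then the surjection $F_n \to H$ sending the $i$-th free generator to $x_i$ has the same kernel as the surjection $F_n \to G$, so $H \cong G$.

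With that in hand I would write
\[
\varphi \;=\; \exists \bar{x}\,\big[\, \gamma(\bar{x}) \wedge \rho(\bar{x}) \,\big],
\]
where $\gamma(\bar{x}) = (\forall z)\,\bigdoublevee_{w}\big(z = w(\bar{x})\big)$ is the generation clause, the disjunction ranging over all words $w$ in $n$ variables, and
\[
\rho(\bar{x}) \;=\; \bigdoublewedge_{w \in R}\big(w(\bar{x}) = e\big) \;\wedge\; \bigdoublewedge_{w \notin R}\big(w(\bar{x}) \neq e\big)
\]
is the relation clause. Here $\gamma$ asserts that $\bar{x}$ generates $H$, and $\rho$ asserts that $\bar{x}$ satisfies exactly the relations that $\bar{a}$ satisfies in $G$ (the positive conjunction forcing $G \twoheadrightarrow H$, and the negative conjunction forcing this map to be injective).

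Next I would check that $\varphi$ is a Scott sentence. Clearly $G \models \varphi$, witnessed by $\bar{x} = \bar{a}$. Conversely, if $H$ is a countable model with witnessing tuple $\bar{x}$, then $\gamma(\bar{x})$ makes $\bar{x}$ generate $H$ while $\rho(\bar{x})$ forces $\{w : w(\bar{x}) = e\} = R$, so $H \cong G$ by the remark above. For the complexity count: each $z = w(\bar{x})$ is quantifier-free, so the inner disjunction in $\gamma$ is computable $\Sigma_1$ and $\gamma$ is computable $\Pi_2$; the clause $\rho$ is a c.e.\ conjunction of atomic and negated-atomic formulas (c.e.\ precisely because $R$ and its complement are computable), hence computable $\Pi_1$. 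Thus $\gamma \wedge \rho$ is computable $\Pi_2$, and prefixing $\exists \bar{x}$ yields a computable $\Sigma_3$ sentence.

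The main thing to get right is not a deep structural argument but the reduction of everything to the single set $R$: solvability of the word problem is what guarantees simultaneously that the infinite conjunctions and disjunctions are over c.e.\ index sets (so that $\varphi$ is genuinely a computable infinitary sentence) and that the kernel condition can be expressed both positively and negatively. I expect this bookkeeping, together with the verification that matching $R$ suffices to pin down the isomorphism type, to be the only real content; the bound $\Sigma_3$ then falls out of the quantifier count.
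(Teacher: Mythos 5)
Your sentence is, clause for clause, the same as the paper's: the paper's $\langle\bar{x}\rangle \cong G$ is your $\rho(\bar{x})$, its $(\forall y)\bigvee_w w(\bar{x}) = y$ is your $\gamma(\bar{x})$, and both proofs use decidability of the word problem to make the conjunctions and disjunctions c.e. But the paper conjoins one further thing that you drop: the group axioms (a finitary $\Pi_1$ sentence), and this omission is a genuine gap, not a formality. A Scott sentence must have as its countable models exactly the isomorphic copies of $G$ among \emph{all} structures in the language of groups, and your $\varphi$ has models that are not groups. Your key remark (generating tuple with kernel $R$ implies isomorphic to $G$) is valid only when $H$ is a group: well-definedness of the map $w(\bar{a}) \mapsto w(\bar{x})$ rests on associativity and cancellation, and your clauses $\gamma$ and $\rho$ constrain only the values of terms that are (say, left-associated) words in $\bar{x}$, saying nothing about the operation applied to arbitrary pairs of elements.

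Here is a concrete failure. Take $G = \mathbb{Z}$, so $R$ is the set of words of exponent sum $0$. Let $H$ have universe $\mathbb{Z}$, identity $0$, inverse $m \mapsto -m$, and product defined by $m \cdot k = m+k$ if $k \in \{-1,0,1\}$, and $m \cdot k = m-k$ otherwise. Evaluating any word left-associated at $x = 1$, the right-hand factor at each step is $\pm 1$, so the value of every word is its exponent sum; hence $x = 1$ witnesses $\gamma \wedge \rho$ and $H \models \varphi$. But $(2 \cdot 2)\cdot 1 = 1 \neq -1 = 2 \cdot (2 \cdot 1)$, so $H$ is not a group and therefore $H \not\cong \mathbb{Z}$. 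The repair is exactly the paper's move: conjoin the finitary $\Pi_1$ group axioms $\theta$. Since $\theta \wedge (\exists \bar{x})\,\psi(\bar{x})$ is equivalent to $(\exists \bar{x})\,(\theta \wedge \psi(\bar{x}))$, and the conjunction of a computable $\Pi_1$ formula with a computable $\Pi_2$ formula is computable $\Pi_2$, the resulting sentence is still computable $\Sigma_3$. With that conjunct added, your argument and your complexity count (which is otherwise correct) go through.
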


\begin{proof}

As a Scott sentence for $G$, we take the conjunction of the group axioms, which are finitary $\Pi_1$, and the sentence $(\exists \ol{x})\,[\langle\ol{x}\rangle\cong G\ \&\ (\forall y) \bigvee_w w(\ol{x}) = y]$.  We write $\langle\ol{x}\rangle\cong G$ for the conjunction of the formulas $w(\ol{x}) = e$ and $w(\ol{x}) \not= e$ that are true of a fixed generating tuple $\bar{a}$.  This formula says that $\bar{x}$ generates a copy of $G$. The formula $(\forall y) \bigvee_w w(\ol{x}) = y$ says that all elements of the structure are included in $\langle\bar{x}\rangle$.  
\end{proof}

For all of the computable finitely generated groups that we know, there is a Scott sentence of lower complexity.  In this section, we first recall known results on free groups. Next, we consider finitely generated abelian groups.  Here we appeal to the fundamental theorem of finitely generated abelian groups.  Finally, we consider the infinite dihedral group, an infinite group that is neither free nor abelian.  

\subsection{Free groups}

In this subsection, we recall results from \cite{freegroups}. For $n$ finite and $n > 1$, it was shown that the free group on $n$ generators, denoted by $F_n$, has a computable $d$-$\Sigma_2$ Scott sentence, and the index set $I(F_n)$ is $m$-complete $d$-$\Sigma^0_2$.  The Scott sentence for $F_n$ is based on old results of Nielsen, characterizing the orbit under isomorphisms of a basis for $F_n$.  We state Nielsen's results briefly below.  For a more complete account, see the classic text of Lyndon and Schupp \cite{lyndonschupp}.

\begin{defn} [Elementary Nielsen transformation]

Let $(a_1, \ldots, a_n)$ be a basis for $F_n$.  The \emph{elementary Nielsen transformations} are as follows:  

\begin{enumerate}

\item Permute the $a_i$'s.

\item Replace some $a_i$ by $a_i^{-1}$.

\item Replace some $a_i$ by $a_ia_j$, for some $j \ne i$.

\end{enumerate}

\end{defn}

\begin{defn} [Nielsen transformation]

A \emph{Nielsen transformation} is a finite composition of elementary Nielsen transformations.  Two tuples $\ol{a}$ and $\ol{b}$ are said to be \emph{Nielsen equivalent} if there is a Nielsen transformation $\ol{w}(\bar{x})$ such that $\bar{w}(\ol{a}) = \bar{b}$.

\end{defn}

Nielsen showed the following.   

\begin{thm} [Nielsen]
\label{bases}

Any two bases of $F_n$ are Nielsen equivalent.

\end{thm}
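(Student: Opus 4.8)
The plan is to prove the stronger statement via the Nielsen reduction method, which shows not merely that two bases are equivalent but that every basis is Nielsen equivalent to the distinguished basis $(x_1,\ldots,x_n)$; transitivity of Nielsen equivalence then yields the theorem. The central device is the notion of a \emph{Nielsen reduced} tuple. Writing each $u_i$ as a reduced word in the fixed generators, I call a tuple $(u_1,\ldots,u_m)$ Nielsen reduced if no $u_i$ is trivial and if, for all $v,w,z$ chosen from the $u_i^{\pm 1}$, the standard cancellation conditions hold: in a product $vw$ with $vw\neq 1$ at most half of each factor cancels, and in a product $vwz$ with $vw\neq 1$ and $wz\neq 1$ the middle factor is not entirely cancelled ($|vwz| > |v| - |w| + |z|$). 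First I would establish the two standard facts about such tuples.

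The first is the \emph{reduction lemma}: every finite tuple can be carried to a Nielsen reduced tuple by a Nielsen transformation. This is proved by induction on the complexity measure $\sum_i |u_i|$: if some condition fails because, say, a large piece of $u_j^{\pm 1}$ cancels into $u_i$, then replacing $u_i$ by $u_i u_j^{\pm 1}$ strictly decreases the total length, and this is an allowed move since $a_i \mapsto a_i a_j^{-1}$ is itself a composition of elementary Nielsen transformations (invert $a_j$, apply a transvection, invert $a_j$ back). Since the lengths are nonnegative integers, the process terminates. The second is the \emph{length lemma}: if $(u_1,\ldots,u_m)$ is Nielsen reduced, then for any nontrivial reduced word $w = u_{i_1}^{\epsilon_1}\cdots u_{i_k}^{\epsilon_k}$ in which no factor cancels its neighbor, the length of $w$ in $F_n$ is at least $k$, so in particular $w \neq 1$. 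This shows a Nielsen reduced tuple freely generates its span and, more importantly, constrains what a generating reduced tuple can look like.

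With these in hand the argument is short. Given a basis $\bar{a} = (a_1,\ldots,a_n)$, I apply the reduction lemma to obtain a Nielsen equivalent tuple $\bar{a}'$ that is Nielsen reduced. Since elementary Nielsen transformations preserve both the length $n$ of the tuple and the subgroup it generates, $\bar{a}'$ still generates $F_n$. Now each standard generator $x_j$ lies in $F_n = \langle \bar{a}' \rangle$, hence equals some reduced product in the $(a_i')^{\pm 1}$; by the length lemma that product has a single factor, so $x_j = (a_i')^{\pm 1}$ for some $i$. Distinct $x_j$ must use distinct indices $i$, so as $j$ ranges over $1,\ldots,n$ this matches the $n$ standard generators bijectively with the $n$ entries of $\bar{a}'$ up to inversion, forcing $\bar{a}'$ to be $(x_1,\ldots,x_n)$ up to a permutation and some inversions---both elementary Nielsen transformations. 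Hence $\bar{a}$ is Nielsen equivalent to the standard basis. Since Nielsen transformations are invertible and closed under composition, Nielsen equivalence is an equivalence relation, and any two bases, each equivalent to $(x_1,\ldots,x_n)$, are equivalent to each other.

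The main obstacle is the combinatorial core, namely the reduction lemma together with the length lemma. Both require a careful case analysis of how subwords cancel in products of the $u_i^{\pm 1}$, and the real work lies in verifying that the chosen transformation genuinely decreases the total length and does not create new violations that raise it again. Once that core is in place, the passage from a Nielsen reduced generating tuple to the standard basis, and the reduction of the full theorem to equivalence with a single fixed basis, are routine.
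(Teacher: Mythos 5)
The paper does not actually prove this theorem: it is quoted as a classical result of Nielsen, with Lyndon--Schupp cited for the details. What you have written is precisely the proof from that cited source---the Nielsen reduction method---and its architecture is correct: carry an arbitrary basis to a Nielsen reduced tuple, use the length estimate to force each standard generator $x_j$ to equal some entry up to inversion, conclude by a counting argument that the reduced tuple is the standard basis up to permutation and inversion, and finish using the fact that Nielsen equivalence is an equivalence relation. One small point you should make explicit in the final step: the pigeonhole argument (and the tacit assumption that no entry of the reduced tuple becomes trivial, or equal or inverse to another entry) rests on the fact that $F_n$ cannot be generated by fewer than $n$ elements, which one gets by abelianizing to $\mathbb{Z}^n$; your length lemma as stated does not by itself rule out these degeneracies for tuples with repeated entries.

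The one genuine gap is in your sketch of the reduction lemma. You claim that whenever a cancellation condition fails, some elementary move strictly decreases $\sum_i |u_i|$, so that induction on total length terminates. That is true for violations of the half-cancellation condition (your first condition, call it (N1)): if more than half of a factor cancels in $u_i u_j^{\pm 1}$, the corresponding transvection shortens the tuple. But it is false for violations of the middle-survival condition (N2): a tuple can satisfy (N1), have minimal total length in its entire Nielsen equivalence class, and still violate (N2), in which case \emph{no} elementary Nielsen move strictly decreases total length. This is exactly why Lyndon--Schupp, after minimizing total length (which secures (N0) and (N1)), must introduce a secondary, lexicographic-style descent---a well-ordering on words obtained by splitting each word into its left and right halves---and show that an (N2) violation permits a move that preserves total length but decreases this auxiliary order. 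You cannot sidestep (N2) either, since your length lemma needs it essentially: (N2) is precisely the condition preventing a middle factor from being swallowed, and without it the inequality $|w| \ge k$ fails. So the induction as you describe it does not close; with the secondary ordering added, the proof is complete and is the one the paper implicitly invokes by its citation.
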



\begin{defn} [Primitive tuple]

For an $n$-tuple of variables $\bar{x}$, and $k \le n$, a $k$-tuple of words $\bar{w}(\bar{x})$ is said to be \emph{primitive} if it can be extended to a basis for $F_n$. In particular, if $k = n$, then $\bar{w}(\bar{x})$ represents a Nielsen transformation; i.e., if $\bar{a}$ is a basis for $F_n$, then $\bar{w}(\bar{a})$ is another basis. Otherwise, $\bar{w}(\bar{x})$ is \emph{imprimitive}.

\end{defn} 

It is easy to see that the set of $n$-tuples of words $\bar{w}(\bar{x})$ representing Nielsen transformations is computably enumerable.  It is less obvious that the set of $n$-tuples of words $\bar{w}(\bar{x})$ that are \emph{not} Nielsen transformations is computably enumerable, but Nielsen showed that this is also true.  

\begin{thm} [Nielsen]
\label{prim}

There is an algorithm to determine whether any given $n$-tuple of words $\ol{w}$ is primitive.

\end{thm}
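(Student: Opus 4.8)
The plan is to give a direct decision procedure based on \emph{Nielsen reduction}. Call a tuple $\ol{u} = (u_1, \ldots, u_k)$ of nontrivial reduced words \emph{Nielsen reduced} if no product of two or three of the $u_i^{\pm 1}$ admits the kind of cancellation that would shorten the tuple; concretely, for $v, v' \in \{u_i^{\pm 1}\}$ with $vv' \ne 1$ one requires $|vv'| \ge \max(|v|, |v'|)$, together with the analogous three-term condition. The first ingredient I would recall from Lyndon and Schupp \cite{lyndonschupp} is that any finite tuple of words can be carried by a Nielsen transformation to a Nielsen-reduced one, and that this can be done \emph{effectively}: the total length $\sum_i |u_i|$ is a nonnegative integer that strictly drops whenever a length-reducing elementary transformation is available, so greedily applying such transformations terminates after finitely many steps in a reduced tuple $\ol{u}$. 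This yields a computable map $\ol{w} \mapsto \ol{u}$ with $\la\ol{u}\ra = \la\ol{w}\ra$ and $\ol{u}$ Nielsen equivalent to $\ol{w}$.

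The second ingredient is the cancellation (``landmark'') lemma for Nielsen-reduced tuples: when an element $g \in \la\ol{u}\ra$ is written as a reduced product $u_{i_1}^{\epsilon_1}\cdots u_{i_m}^{\epsilon_m}$, each factor leaves at least one uncancelled letter in $g$, so $|g| \ge m$. In particular a Nielsen-reduced tuple freely generates its subgroup, so the number of nontrivial entries of $\ol{u}$ equals $\rank\la\ol{w}\ra$. For the case $k = n$ at issue, $\ol{w}$ is primitive exactly when it is a basis, i.e.\ $\la\ol{w}\ra = F_n$, and I would read this off $\ol{u}$: if $\la\ol{u}\ra = F_n$ then each generator $a_j$ lies in $\la\ol{u}\ra$, and applying the landmark lemma to $|a_j| = 1$ forces $m = 1$, so $a_j = u_i^{\pm 1}$ for a single $i$; since there are $n$ generators and $n$ entries, $\ol{u}$ must be a signed permutation of $a_1, \ldots, a_n$. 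The converse is immediate. Hence the algorithm is: Nielsen-reduce $\ol{w}$ to $\ol{u}$ and accept iff $\ol{u}$ is a signed permutation of the free generators. This also certifies the dichotomy stressed just before the statement: the primitive $n$-tuples are precisely the images of the standard basis under Nielsen transformations (Theorem \ref{bases}), so they form a c.e.\ set, while the reduction algorithm shows the complement is c.e.\ too.

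The step I expect to be the main obstacle is the landmark lemma itself: controlling the surviving letters in products of a Nielsen-reduced tuple is exactly what makes the reduced form a faithful invariant of the generated subgroup and lets one detect $\la\ol{w}\ra = F_n$ by inspection. I would budget most of the effort there. Finally, for the more general statement about $k$-tuples with $k < n$, Nielsen reduction alone is not enough, since a reduced $\ol{u}$ can be primitive without consisting of generators (for instance $a_1a_2$ in $F_2$); there primitivity is equivalent to $\la\ol{w}\ra$ being a free factor of $F_n$, and one must invoke Whitehead's algorithm for recognizing free factors.
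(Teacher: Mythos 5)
Your proposal is correct, but it takes a genuinely different route from the paper's. The paper's proof is a bounded search: by Theorem \ref{bases}, a primitive $n$-tuple $\ol{w}$ is the image of the basis under some composition of elementary Nielsen transformations, and the key point is that the number of elementary moves can be bounded by the total word length of $\ol{w}$; decidability then follows by checking the finitely many compositions within that bound against $\ol{w}$. You instead run the reduction deterministically and never search: Nielsen-reduce $\ol{w}$, then decide by inspecting the output, using the cancellation (``landmark'') lemma to show that a Nielsen-reduced $n$-tuple generates $F_n$ only if it is a signed permutation of the basis. The two arguments draw on the same underlying theory---the paper's length bound is itself extracted from the reduction process you describe---but your version makes the algorithm explicit and yields more information: the reduced tuple freely generates its subgroup, so you also read off $\rank\la\ol{w}\ra$, and your closing observation correctly delimits the theorem (for $k$-tuples with $k<n$, primitivity means being part of a basis, and one genuinely needs Whitehead's algorithm, which neither argument supplies). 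One caution on a technical point: your parenthetical justification that greedily applying length-reducing elementary transformations already terminates in a Nielsen-reduced tuple is an oversimplification. Greedy length reduction guarantees only the two-factor condition; the three-factor condition, which is exactly what the landmark lemma requires, may force the length-preserving ``tie-breaking'' transformations used in Lyndon and Schupp's proof \cite{lyndonschupp}. Since you invoke their effectivity result rather than reprove it, this does not break the argument, but the justification as written should be corrected or dropped.
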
 

\begin{proof} [Proof sketch]

By Theorem \ref{bases}, if $\ol{w}$ is a Nielsen transformation, then it can be obtained as a composition of elementary Nielsen transformations. Furthermore, we can take the number of these elementary Nielsen transformations to be at most equal to the total word length of $\ol{w}$. 
\end{proof}

The following theorem from \cite{freegroups} uses the above result on Nielsen transformations.

\begin{thm} [Carson-Harizanov-K-Lange-McCoy-Morosov-Quinn-Safranski-Wallbaum]
Let $F_n$ denote the free group of rank $n$. Then $F_n$ has a computable $d$-$\Sigma_2$ Scott sentence. Furthermore, $I(F_n)$ is $m$-complete $d$-$\Sigma^0_2$.
\end{thm}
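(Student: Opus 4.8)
The statement has two halves: exhibiting a computable $d$-$\Sigma_2$ Scott sentence for $F_n$, and showing that $I(F_n)$ is $m$-complete $d$-$\Sigma^0_2$. The easy half of the index-set claim is the upper bound: once a computable $d$-$\Sigma_2$ Scott sentence is in hand, the general principle recorded in the introduction (a computable $\Sigma_2$ sentence cuts out a set with $\Sigma^0_2$ intersection with $I(Mod(L))$, and similarly a computable $\Pi_2$ sentence gives a $\Pi^0_2$ intersection) yields at once that $I(F_n)$ is $d$-$\Sigma^0_2$. So the real work is (a) the Scott sentence and (b) $d$-$\Sigma^0_2$ hardness.

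For (a), the plan is to produce a computable $\Pi_1$ formula $\Phi(\ol x)$ that, inside $F_n$, defines exactly the set of bases (equivalently, by Theorem \ref{bases}, the orbit of a fixed basis $\ol a$ under automorphisms). The obstruction is that ``$\ol x$ is a basis'' appears to need both ``$\ol x$ generates freely'' ($\Pi_1$) and ``$\ol x$ generates everything'' ($\Pi_2$), and the second clause is what pushes the obvious Scott sentence up to $\Sigma_3$ as in Proposition \ref{Sigma_3}. The trick I would use is to characterize non-bases existentially: for each imprimitive $n$-tuple of words $\ol v(\ol y)$ put in the conjunct $(\forall \ol c)\,\ol x \neq \ol v(\ol c)$, and let $\Phi(\ol x)$ be the conjunction over all imprimitive $\ol v$. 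Since primitivity is decidable (Theorem \ref{prim}), the imprimitive tuples form a c.e.\ set, so $\Phi$ is computable $\Pi_1$. Writing $\ol b = \ol v(\ol a)$ for the words expressing $\ol b$ in $\ol a$, one checks using Theorem \ref{bases} that $\ol b$ is a basis iff $\ol v$ is a Nielsen transformation; hence in $F_n$ we have $\Phi(\ol b)$ iff $\ol b$ is a basis. The Scott sentence is then the conjunction of the group axioms with the $\Sigma_2$ sentence $(\exists \ol x)[\beta(\ol x)\wedge \Phi(\ol x)]$ and the $\Pi_2$ sentence $(\forall \ol x)[(\beta(\ol x)\wedge \Phi(\ol x))\rightarrow \gamma(\ol x)]$, where $\beta(\ol x)$ is the $\Pi_1$ formula saying $\ol x$ generates freely and $\gamma(\ol x)$ is the $\Pi_2$ formula $(\forall y)\bigvee_w w(\ol x)=y$. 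This is computable $d$-$\Sigma_2$. For correctness, the first conjunct produces a free-generating $\ol x^\ast$ with $\Phi(\ol x^\ast)$, and the second conjunct forces $\ol x^\ast$ to generate, so the canonical surjection $F_n\to M$ sending a basis to $\ol x^\ast$ is injective, giving $M\cong F_n$. The role of $\Phi$ is precisely to make the $\Pi_2$ conjunct true in $F_n$: without it $(\forall\ol x)[\beta\rightarrow\gamma]$ fails, since, e.g., in $F_2$ the tuple $(a,b^2)$ generates freely but not everything.

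For (b), I would reduce a fixed $m$-complete $d$-$\Sigma^0_2$ set to $I(F_n)$. A convenient choice is the product $D=\{\la i,j\ra : W_i \text{ finite and } W_j \text{ infinite}\}$, where finiteness is $\Sigma^0_2$-complete and infiniteness is $\Pi^0_2$-complete; the product is $m$-complete $d$-$\Sigma^0_2$. The plan is to build a uniformly computable family of groups $G_{i,j}$ (uniformly in $i,j$ an index for $\chi_{D(G_{i,j})}$) with $G_{i,j}\cong F_n$ iff $\la i,j\ra\in D$. I would drive a stagewise presentation by the enumerations of $W_i$ and $W_j$, arranging two independent features: a $\Sigma^0_2$-feature whereby each new element of $W_i$ threatens to impose a collapsing relation, so that the copy remains free of rank $n$ with respect to this feature exactly when $W_i$ is finite; and a $\Pi^0_2$-feature whereby an auxiliary generator is absorbed each time $W_j$ grows, so that the rank stays $n$ exactly when $W_j$ is infinite (if $W_j$ is finite, an uncancelled generator survives and changes the isomorphism type). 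Making the two features act on disjoint parts of the presentation ensures $G_{i,j}\cong F_n$ iff both conditions hold.

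The main obstacle is this last construction together with its verification. One must keep each $G_{i,j}$ a genuinely computable group---equivalently, maintain a uniformly solvable word problem throughout the stagewise presentation---while guaranteeing that the limiting isomorphism type is $F_n$ in exactly the coded case and is provably different (a free group of the wrong rank, or a group with torsion) otherwise. The delicate points are the independence of the two features and the proof that the ``collapse'' and ``surviving generator'' outcomes really do destroy isomorphism with $F_n$; here one leans again on the rigidity of free groups, in particular that any generating $n$-tuple of $F_n$ is a basis and that $F_n$ has rank exactly $n$. The Scott-sentence half, by contrast, is essentially finished once the identity ``$\Phi$ defines the bases'' is established from Nielsen's theorems.
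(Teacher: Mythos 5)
Your Scott-sentence half is correct and is essentially the paper's construction. Your $\Sigma_2$ conjunct---there is a free $n$-tuple $\ol{x}$ such that $\ol{w}(\ol{y})\neq\ol{x}$ for every tuple $\ol{y}$ and every imprimitive $\ol{w}$---is literally the paper's, and it rests on the same two Nielsen ingredients (Theorems \ref{bases} and \ref{prim}, which make the set of imprimitive tuples c.e.). The only difference is the $\Pi_2$ conjunct: the paper uses ``every tuple is generated by some $n$-tuple'' and then argues that the word tuple carrying the $n$-tuple to the $\Sigma_2$-witness must be primitive, hence invertible, so the witness generates; you instead relativize generation to the computable $\Pi_1$ formula $\beta\wedge\Phi$, i.e., you first show the orbit of a basis is $\Pi_1$-definable and then assert that its members generate. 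Both verifications go through, so this is a correct minor variant (it is exactly the ``orbit defined by a computable $\Pi_1$ formula'' packaging attributed in the footnote to Alvir, Knight, and McCoy).

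The gap is in the hardness half; the paper defers it to \cite{freegroups}, but its own Lemmas \ref{abhard} and \ref{dihhard} show the intended technique, and your sketch diverges from it at the crucial point. Your reduction source $D$ is fine ($m$-complete $d$-$\Sigma^0_2$), but the ``$\Sigma^0_2$-feature'' as you describe it---each new element of $W_i$ imposes a collapsing relation---cannot work: a relation, once written into the atomic diagram, can never be retracted, so if $W_i$ is finite but nonempty your group satisfies a nontrivial relation among its designated generators and is not free of rank $n$; you would get $G_{i,j}\cong F_n$ only when $W_i=\emptyset$, which breaks the reduction (and if the ``threat'' is never executed, the feature does nothing at all). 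The correct move, used in the paper's proof of Lemma \ref{dihhard} where $a$ is re-expressed as $a'ba'$, is a re-embedding: declare the current generators to be imprimitive words in fresh generators, so that each single move leaves the group free of rank $n$, while infinitely many moves produce a locally free, non-finitely-generated group, hence not $F_n$. Similarly, your ``absorb the auxiliary generator'' move for the $\Pi^0_2$-feature needs justification that absorption is consistent with the finite piece of diagram already committed: this is the free-group analogue of the Fact highlighted in the proof of Lemma \ref{abhard} for $\Z^m\subseteq\Z^{m+1}$, namely that any finite quantifier-free configuration true of a basis $(\ol{a},c)$ of $F_{n+1}$ is realized in $F_n$ by $(\ol{a},w(\ol{a}))$ for a suitable word $w$ (equivalently, $F_n$ is existentially closed in $F_{n+1}$). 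That is true for $n\geq 2$ but is a genuine theorem about free groups (e.g., via residual freeness of $F_n\ast\Z$), and without identifying it the ``delicate points'' you flag cannot be discharged.
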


\begin{proof} [Proof sketch]

Let $N$ denote the set of all primitive $n$-tuples of words.  Then consider the computable $d$-$\Sigma_2$ sentence that is the conjunction of the following:
\begin{enumerate}

\item the group axioms,

\item  a computable $\Pi_2$ sentence saying that every tuple is generated by an $n$-tuple, 

\item  a computable $\Sigma_2$ sentence saying that there exists an $n$-tuple $\ol{x}$, with no non-trivial relations, such that for any $n$-tuple $\ol{y}$ and any $n$-tuple of words $\ol{w} \notin N$, 
$\ol{w}(\ol{y})\not=\ol{x}$. 

\end{enumerate}

This sentence is in fact a Scott sentence for $F_n$. See \cite{freegroups} for the proof of this fact, along with the proof that the index set of $F_n$ is $d$-$\Sigma^0_2$-hard.
\end{proof}  

\subsection{Generalizing results for free groups}   

If $G$ is a group of rank $n$, it is clear that any $n$-tuple of words representing a Nielsen transformation takes one generating tuple to another.  There may be other tuples of words that also take one $n$-tuple to another.  We have already noted that the following property is true of the finitely generated free groups.               

\begin{perty}[Nielsen Uniqueness Property] 
\label{p1}

All generating tuples of minimal size are Nielsen equivalent.

\end{perty}

A group satisfying Property \ref{p1} has only one equivalence class of minimal generating sets. It may be helpful to think of this property in the following way.  Let $G$ be a group of rank $n$, with presentation $\langle\ol{a}, \bar{R}\rangle$.  Let $f : F_n \rightarrow G$ be a homomorphism onto $G$. Let $\ol{c}$ be a basis for $F_n$ with $f(\ol{c}) = \ol{a}$.  Property \ref{p1} says that if $\ol{b}$ is another $n$-tuple generating $G$, then there is a corresponding basis $\ol{d}$ for $F_n$ with $f(\ol{d}) = \ol{b}$ such that $\ol{a}$ and $\ol{b}$ are Nielsen equivalent.  If $\ol{w}$ is a Nielsen transformation carrying that $\ol{c}$ to $\ol{d}$, then in $G$, via the relators $R$, we have $\ol{w}(\ol{a}) = \ol{b}$.  

Kapovich and Schupp \cite{KS} have shown that a randomly chosen finitely generated group with a single relator has Property \ref{p1}.  Here is a precise statement of their result.     

\begin{thm} [Kapovich-Schupp]

Let $N(n,s)$ be the number of group presentations with $n$ generators $a_1,\ldots,a_n$, and with a single relator $R$ of length at most $s$.  Let $P(n,t)$ be the number of these presentations for which the group has Property \ref{p1}.  Then $\displaystyle\lim_{t\rightarrow\infty} \frac{P(n,s)}{N(n,s)} = 1$.

\end{thm}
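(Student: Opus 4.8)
The plan is to recognize this as a genericity (``random group'') statement and reduce it to two ingredients: a counting estimate showing that a generic single relator enjoys strong small cancellation, and a structural fact that small cancellation one-relator groups are rigid enough that all minimal generating tuples are Nielsen equivalent. The guiding idea is that a generic relator $R$ is ``long and random,'' which forces a metric small cancellation condition, and such one-relator groups cannot have exotic generating sets.

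First I would prove a counting lemma. Fix any $\lambda > 0$. Among cyclically reduced words $R$ over $a_1^{\pm 1}, \ldots, a_n^{\pm 1}$ of length at most $s$, the proportion satisfying the metric small cancellation condition $C'(\lambda)$ tends to $1$ as $s \to \infty$. This is a direct estimate: a word fails $C'(\lambda)$ only when two distinct cyclic translates of $R^{\pm 1}$ share a common subword (a ``piece'') of length at least $\lambda |R|$, and the number of words of length $s$ carrying such a forced repetition is smaller than the total count $\sim (2n-1)^s$ by an exponential factor of order $(2n-1)^{-\lambda s}$ (after paying a polynomial factor for the choice of positions). The same style of counting yields the companion generic facts I will need: generically $R$ is not a proper power, and the group $G = \langle a_1, \ldots, a_n \mid R \rangle$ has rank exactly $n$ (a generic $R$ is nowhere near primitive), so that the $n$-tuples are precisely the generating tuples of minimal size referenced in Property \ref{p1}. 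Together with the non-proper-power condition, small cancellation also makes $G$ torsion-free and word-hyperbolic, which supplies the rigidity exploited below.

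Second, and this is the heart of the matter, I would show that whenever $R$ satisfies a sufficiently strong $C'(\lambda)$ condition, every generating $n$-tuple $\bar{b}$ of $G$ is Nielsen equivalent to the standard tuple $\bar{a}$. Here I would invoke the Arzhantseva--Ol'shanskii method: a generating $n$-tuple determines a surjection $F_n \to G$, and one analyzes reduced van Kampen diagrams (or labeled Stallings-type graphs) recording how the $b_i$, written as words in the $a_j$, interact with the single defining relator. Strong small cancellation forces any such diagram to have a very restricted shape, and the combinatorial analysis produces, for any non-standard tuple, a length-reducing elementary Nielsen transformation. Iterating drives the total length of $\bar{b}$ downward until the tuple coincides with $\bar{a}$ up to trivial Nielsen moves, giving Nielsen equivalence and hence Property \ref{p1}.

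The main obstacle is precisely this second step. The counting in the first step is routine, but controlling \emph{all} generating tuples---ruling out generating sets that are not obviously reducible---is where the real difficulty lies, and it is what requires the full graph-theoretic machinery of Arzhantseva and Ol'shanskii rather than any soft argument. One must bound the number and type of relator cells that can occur in a reduced diagram for a candidate reduction, and verify that the small cancellation hypothesis always leaves a usable length-reducing move available; the degenerate case in which no reduction exists is exactly the case where $\bar{b}$ is already Nielsen equivalent to $\bar{a}$, and closing that case carefully is the crux of the proof.
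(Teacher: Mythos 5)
The paper does not actually prove this theorem: it is imported from Kapovich--Schupp \cite{KS} (whose title names the very method you invoke), so there is no internal argument to compare yours against. At the level of architecture your outline does track the cited proof---an exponential-genericity counting lemma combined with an Arzhantseva--Ol'shanskii analysis of an arbitrary generating $n$-tuple, driving its total length down by Nielsen moves. But as a proof your proposal has a genuine gap, in two respects.

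First, the step you yourself identify as the heart of the matter is not carried out: ``strong small cancellation forces any such diagram to have a very restricted shape, and the combinatorial analysis produces \ldots a length-reducing elementary Nielsen transformation'' is a restatement of what must be proved, with all content deferred to ``the full graph-theoretic machinery of Arzhantseva and Ol'shanskii.'' Second, and more concretely, the hypothesis you propose to feed into that machinery is the wrong one. The generic property that makes the A--O analysis work is not a metric condition $C'(\lambda)$ on $R$; it is a non-readability (incompressibility) condition: no definite fraction of $R$ can be read on any folded graph over $\{a_1^{\pm 1},\dots,a_n^{\pm 1}\}$ of bounded complexity (first Betti number at most $n$, boundedly many edges) other than, essentially, the standard rose. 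Small cancellation only constrains overlaps of $R$ with its own cyclic translates and does not imply non-readability. For instance, a $C'(1/10)$ word $R$ written in $a_1,a_2$ alone, viewed as a relator in $F_3$, is fully readable on the two-petal subrose, so the A--O analysis cannot even begin for the presentation $\langle a_1,a_2,a_3 \mid R\rangle$, although $R$ satisfies as strong a small cancellation condition as you like (and indeed the resulting group is a nontrivial free product, the degenerate structure the generic condition must exclude). Consequently your ``routine'' counting lemma establishes genericity of the wrong property; the substantive counting in Kapovich--Schupp is the estimate, taken over all graphs of bounded complexity, that a random word admits no compressed reading---that count, together with the diagram analysis it feeds, is where the real work of the theorem lies. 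The companion facts you need (that $R$ is not a proper power, lies in no proper free factor, and that $G$ has rank exactly $n$) likewise come out of the readability condition rather than out of $C'(\lambda)$.
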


In the free group $F_n$, the only $n$-tuples of words that can take one basis to another are the ones that represent Nielsen transformations.  For other groups $G$, an $n$-tuple of words representing a Nielsen transformation takes one minimal-size generating tuple $\bar{a}$ to another.  There may be further $n$-tuples of words that also do this.  For example, if $G$ has the presentation $\la a,b \; | \; aba^{-1}b^{-1} = 1\ra$ (i.e., $G$ is the free abelian group of rank $2$), then the pair of words $(bab,b)$ is not obtained from $(a,b)$ by applying a Nielsen transformation.  However, in $G$, $bab = ab^2$, and $(ab^2,b)$ is obtained from $(a,b)$ by applying a Nielsen transformation, so it is also a generating tuple.

\begin{defn}[Primitive tuple relative to a presentation]

Let $G$ be a group with presentation $\la\bar{a},\bar{R}\ra$, 
where $\bar{a}$ is a generating tuple of minimal length 
$n$.  We say that an $n$-tuple of words 
$\bar{w}$ is \emph{primitive} (for the given presentation) if $\bar{w}$ takes 
$\bar{a}$ to a tuple that is automorphic to $\bar{a}$. That is, $\ol{w}(\ol{a})$ must be in the orbit of $\ol{a}$.

\end{defn}

In the next two subsections, we show that computable finitely generated abelian groups all have computable 
$d$-$\Sigma_2$ Scott sentences, and so does the infinite dihedral group.  It is not known whether every finitely presented group has a $d$-$\Sigma_2$ Scott sentence.  We conjecture that the typical group of the kind considered by Kapovich and Schupp has a $d$-$\Sigma_2$ Scott sentence.          

\begin{conj}

Let $N(n,s)$ be the number of group presentations with $n$ generators $a_1,\ldots,a_n$, and with a single relator $R$ of length at most $s$.  Let $P(n,s)$ be the number of these presentations for which the group has Property \ref{p1}.  Then $lim_{s\rightarrow\infty} \frac{P(n,s)}{N(n,s)} = 1$.   

\end{conj}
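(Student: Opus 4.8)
The plan is to split the statement into two pieces and lean on the Kapovich--Schupp theorem for one of them. Reading $P(n,s)$ as counting the presentations whose group carries a computable $d$-$\Sigma_2$ Scott sentence (as in the discussion preceding the statement, rather than the verbatim copy of Property~\ref{p1}), it suffices to prove a structural implication: every computable finitely generated group $G$ with Property~\ref{p1} whose orbit of generating tuples is suitably definable has a computable $d$-$\Sigma_2$ Scott sentence. The Kapovich--Schupp theorem \cite{KS} already gives that asymptotically almost all single-relator presentations have Property~\ref{p1}, so once the structural implication is in place the density of groups with a $d$-$\Sigma_2$ Scott sentence is at least the density of those with Property~\ref{p1}, which tends to $1$.

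For the structural implication I would imitate the $F_n$ construction recalled above. Fix the standard generating tuple $\bar a$ of length $n$ coming from the presentation $\la \bar a, R\ra$. The candidate Scott sentence is the conjunction of (i) the group axioms, (ii) a computable $\Pi_2$ sentence asserting that every element lies in $\la \bar x\ra$, and (iii) a computable $\Sigma_2$ sentence asserting the existence of an $n$-tuple $\bar x$ satisfying exactly the relations true of $\bar a$ and such that for every $n$-tuple $\bar y$ and every $n$-tuple of words $\bar w \notin N_G$ one has $\bar w(\bar y) \ne \bar x$. Here $N_G$ is the set of tuples that are \emph{primitive relative to the presentation}, i.e.\ those $\bar w$ with $\bar w(\bar a)$ in the orbit of $\bar a$; it plays the role that Nielsen's set $N$ plays for $F_n$. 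Property~\ref{p1} is exactly what guarantees that controlling the orbit of the single tuple $\bar a$ suffices, since every minimal generating tuple is then Nielsen equivalent to $\bar a$.

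The obstacle --- and the reason this remains a conjecture rather than a theorem --- lies entirely in making clauses (ii) and (iii) \emph{computable} at the stated levels, which amounts to showing that $N_G$ and its complement are both computably enumerable, equivalently that the orbit of $\bar a$ is defined by a computable $\Pi_1$ formula. One direction is easy: $N_G$ is c.e.\ granting solvability of the word problem, since one can enumerate $\bar w$ together with a candidate automorphism and certify membership when it succeeds. The hard direction is the complement: for $F_n$ this rested on Nielsen's algorithm (Theorem~\ref{prim}) deciding primitivity, and no such algorithm is available for a general one-relator group. By the Alvir--Knight--McCoy characterization \cite{AKM} quoted in the footnote, a computable finitely generated group has a computable $d$-$\Sigma_2$ Scott sentence precisely when the orbit of a generating tuple is computable $\Pi_1$, so the whole problem collapses to proving that for asymptotically almost all single-relator presentations this $\Pi_1$-definability holds.

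I would attack this last point using the small-cancellation and genericity geometry that already underlies the Kapovich--Schupp argument. The aim is a Nielsen-type length bound: to show that, for a density-$1$ family of relators, any $\bar w$ carrying a generating tuple into the orbit of $\bar a$ can be taken of complexity bounded in terms of the lengths involved, so that the failures $\bar w \notin N_G$ can be enumerated and clause (iii) becomes genuinely computable $\Sigma_2$. Extracting such a uniform bound from the geometry of random one-relator groups is the main difficulty, and it is exactly what is missing from a complete proof; absent it, one has the correct infinitary Scott sentence but no control over the complexity of its conjuncts.
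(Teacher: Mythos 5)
You should first register what the target statement actually is: it is a \emph{Conjecture} in the paper, and the paper offers no proof of it whatsoever. (Indeed, as printed, the conjecture is a verbatim repetition of the Kapovich--Schupp theorem about Property~\ref{p1} stated just above it --- evidently a slip; the intended content, announced in the sentence immediately preceding it, is that the typical one-relator group has a $d$-$\Sigma_2$ Scott sentence.) You correctly detected this textual problem and re-read $P(n,s)$ accordingly, which is the right interpretive move. But it also means there is no ``paper proof'' to compare your proposal against, and your proposal --- by your own explicit admission --- is not a proof either, so the statement remains exactly as open after your argument as before it.

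The genuine gap is the one you name yourself, and it is not a technical loose end but the entire mathematical content of the conjecture. Your use of \cite{KS} together with the $F_n$/$D_\infty$ template reduces the problem to: for asymptotically almost all one-relator presentations $\la \bar{a}, R\ra$, the set of imprimitive $n$-tuples of words is c.e.\ (equivalently, by the characterization in \cite{AKM}, the orbit of $\bar{a}$ is defined by a computable $\Pi_1$ formula). For $F_n$ this rests on Nielsen's algorithm (Theorem~\ref{prim}); for $D_\infty$ it rests on the Benois normal-form argument (Proposition~\ref{Benois}); for generic one-relator groups no analogue is proved or cited, and your appeal to ``small-cancellation and genericity geometry'' is a research program, not an argument. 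Two further points deserve attention even in the infinitary (non-computable) half of your sketch. First, in your clause (iii) you quantify over \emph{all} $n$-tuples $\bar{y}$, but verifying that $G$ itself satisfies such a clause requires restricting $\bar{y}$ to tuples satisfying the relations of $\bar{a}$ (exactly as $\sigma_2$ and $\sigma_3$ do in the $D_\infty$ proof, via Lemma~\ref{lem2}), and it additionally needs $G$ to be Hopfian, so that a generating tuple Nielsen-equivalent to $\bar{a}$ and satisfying its relations actually lies in the orbit of $\bar{a}$. Second, Property~\ref{p1} speaks only of generating tuples of \emph{minimal} size, so your clause (ii) must produce generating $n$-tuples satisfying the relations of $\bar{a}$, which is an extra demand on the presentation. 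None of this is fatal to the outline, but it confirms that even the ``easy'' part of your argument needs more hypotheses than Property~\ref{p1} alone, while the hard part --- the effective enumeration of imprimitivity for random one-relator groups --- is missing entirely, and is precisely why the paper left this as a conjecture.
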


\subsection{Finitely generated abelian groups}

Apart from $\Z$, the finitely generated abelian groups are not free. Nevertheless, they all have computable $d$-$\Sigma_2$ Scott sentences.

\begin{thm}
\label{abcomp}

If $G$ is a finitely generated abelian group, then $G$ has a computable $d$-$\Sigma_2$ Scott sentence. Furthermore, $I(G)$ is $m$-complete $d$-$\Sigma^0_2$ for infinite $G$.

\end{thm}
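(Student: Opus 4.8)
The plan is to follow the template used for the free groups in the previous subsection, replacing Nielsen's theorems by their linear-algebraic analogues. By the fundamental theorem of finitely generated abelian groups, $G\cong\Z^r\oplus T$ with $T$ finite, so $G$ is finitely presented; fix a minimal generating tuple $\ol{a}$ of length $n$ and a presentation $\la\ol{a},\ol{R}\ra$. Because $G$ is abelian, every word $w(\ol{x})$ is an integer-linear combination $\sum_i m_i x_i$, so an $n$-tuple of words $\ol{w}$ is just an integer $n\times n$ matrix $M$ and $\ol{w}(\ol{a})=M\ol{a}$. The first thing I would establish is the abelian analogue of Theorem \ref{prim}: the set $N$ of $n$-tuples $\ol{w}$ that are primitive for $\la\ol{a},\ol{R}\ra$ --- that is, for which $M\ol{a}$ lies in the orbit of $\ol{a}$ --- is computable. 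This is pure linear algebra modulo the relations: the assignment $\ol{a}\mapsto M\ol{a}$ induces an endomorphism of $G$ exactly when $M$ respects the relation lattice, and one decides via the Smith normal form of the relation matrix of $G$ whether that endomorphism is an automorphism, a finite computation. Hence both $N$ and its complement are c.e.

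Next I would write the Scott sentence as the conjunction of three parts, exactly as in the free case. The first is the finitary $\Pi_1$ conjunction of the group axioms together with commutativity. For the other two, let $\theta(\ol{x})$ abbreviate the $\Pi_1$ formula
\[
\la\ol{x}\ra\cong G\ \wedge\ (\forall\ol{y})\bigdoublewedge_{\ol{w}\notin N}\ol{w}(\ol{y})\neq\ol{x},
\]
where $\la\ol{x}\ra\cong G$ is the c.e.\ conjunction of the relations $w(\ol{x})=e$ and $w(\ol{x})\neq e$ that hold of $\ol{a}$; this first conjunct is $\Pi_1$ because $G$ has solvable word problem, and the second is $\Pi_1$ because $N^c$ is c.e. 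Then take the computable $\Sigma_2$ sentence $(\exists\ol{x})\,\theta(\ol{x})$ together with the computable $\Pi_2$ sentence $(\forall y)(\forall\ol{x})[\,\theta(\ol{x})\rightarrow\bigdoublevee_w w(\ol{x})=y\,]$, which asserts that every tuple satisfying $\theta$ generates the whole group. The conjunction of these two is computable $d$-$\Sigma_2$, as required.

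The heart of the argument is verifying that this sentence is a Scott sentence, and this splits into two claims about $\theta$. First, $G\models(\exists\ol{x})\theta(\ol{x})$ via $\ol{x}=\ol{a}$: if $\ol{w}(\ol{y})=\ol{a}$, then $\ol{y}$ generates $G$, and one checks that the matrix of $\ol{w}$ must then induce an automorphism, so $\ol{w}\in N$. Second, and conversely, if $\ol{x}$ satisfies $\la\ol{x}\ra\cong G$ but generates a \emph{proper} subgroup of $G$ (for instance $\ol{x}=(2)$ in $\Z$, where $\la2\ra\cong\Z$ but $\la2\ra\neq\Z$), then I must show $\ol{x}$ is an imprimitive image $\ol{w}(\ol{y})$ of a genuine generating tuple $\ol{y}$, so that $\theta(\ol{x})$ fails. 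Granting these, any countable model $H$ of the sentence contains, by the $\Sigma_2$ part, a tuple $\ol{x}_0$ with $\theta(\ol{x}_0)$, whence $\ol{x}_0$ is a genuine generating tuple of the right isomorphism type; applying the $\Pi_2$ part to $\ol{x}_0$ forces every element of $H$ into $\la\ol{x}_0\ra$, so $H=\la\ol{x}_0\ra\cong G$. I expect the second claim --- that every tuple generating a proper isomorphic copy is realised as an imprimitive image of a true generating tuple --- to be the main obstacle; it is exactly where the structure theory of $G$ (surjectivity of the word map from generating tuples, and the classification of the relevant integer matrices) enters, and it is the abelian replacement for Property \ref{p1} and Nielsen's theorems.

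Finally, for the index set, the computable $d$-$\Sigma_2$ Scott sentence immediately gives $I(G)\in d$-$\Sigma^0_2$ by the remarks in the introduction, so only $d$-$\Sigma^0_2$-hardness remains, and here the hypothesis that $G$ is infinite (so $r\geq1$) is used. I would fix an $m$-complete $d$-$\Sigma^0_2$ set and build a uniformly computable family of abelian groups $H_e$ with $H_e\cong G$ if and only if $e$ lies in that set, using the free summand to create room for coding: a $\Sigma^0_2$ event would be arranged to keep an auxiliary summand finite (or to collapse an extra copy of $\Z$), while a $\Pi^0_2$ event controls the torsion part, so that membership genuinely requires the difference of a $\Sigma^0_2$ and a $\Pi^0_2$ condition. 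The obstacle in this half is designing the coding so that both conditions are simultaneously necessary, matching the $d$-$\Sigma^0_2$ level exactly rather than collapsing to $\Sigma^0_2$ or $\Pi^0_2$.
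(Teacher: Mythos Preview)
Your approach diverges substantially from the paper's, and in the first half it contains a real gap. You follow the free-group template, defining $N$ as the set of word-tuples $\bar w$ with $\bar w(\bar a)$ in the $\mathrm{Aut}(G)$-orbit of the fixed minimal generating tuple $\bar a$, and you claim that $G\models\theta(\bar a)$, i.e.\ that $\bar w(\bar y)=\bar a$ forces $\bar w\in N$. This is exactly where the free-group argument does \emph{not} transfer: it relies on all minimal generating $n$-tuples lying in a single automorphism orbit, which fails once $G$ has torsion. Take $G=\Z\oplus\Z/2$ with $\bar a=((1,0),(0,1))$. The pair $\bar w=(x_2,\,x_1-2x_2)$ has $\bar w(\bar a)=((0,1),(1,0))$, which is a generating pair but is \emph{not} in the orbit of $\bar a$ (no automorphism sends the infinite-order element $(1,0)$ to the order-$2$ element $(0,1)$); hence $\bar w\notin N$. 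Yet for the generating pair $\bar y=((2,1),(1,0))$ one has $\bar w(\bar y)=\bar a$. So $\theta(\bar a)$ is false, and by the same token $\theta(\bar x)$ fails for every $\bar x$ satisfying $\langle\bar x\rangle\cong G$; your $\Sigma_2$ conjunct is not satisfied by $G$ itself. You flag your ``second claim'' as the main obstacle, but the scheme already breaks at the first claim. (For $G=\Z^n$ the approach does go through, since there all minimal generating tuples are $GL_n(\Z)$-equivalent; the difficulty is genuinely the torsion.)

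The paper sidesteps this entirely by abandoning the primitive-tuple template and writing a structural sentence directly from the decomposition $G\cong\Z^n\oplus T$: a $\Pi_1$ formula pinning down the finite torsion subgroup $T$, a $\Sigma_2$ sentence asserting the existence of $n$ $\Z$-independent elements each of which is \emph{indivisible} (no $k>1$ divides it), and a $\Pi_2$ sentence saying any $n+1$ elements are $\Z$-dependent. Indivisibility plus independence is what replaces ``not an imprimitive image'' and is robust in the presence of torsion. For the hardness half, your outline is in the right spirit; the paper's construction uses three target structures $\Z^{k-1}$, $\Z^{k}$, $\Z^{k+1}$ (adding or collapsing a free generator as the $\Sigma^0_2$ approximations to $S_1$ and $S_2$ change), with the key fact that passing from $\Z^{m+1}$ to $\Z^m$ preserves all existential formulas of a generating tuple, so the finite diagram built so far survives each collapse.
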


Let $G$ be an infinite, finitely generated abelian group. We first prove that $G$ has a computable $d$-$\Sigma_2$ Scott sentence. Then we show that $I(G)$ is $d$-$\Sigma^0_2$-hard.  This implies that the computable $d$-$\Sigma_2$ Scott sentence is optimal.

\begin{proof}

To formulate a $d$-$\Sigma_2$ description of $G$, we invoke the well-known fundamental theorem of finitely generated abelian groups, which states that every such group can be expressed as a direct sum of cyclic groups. In particular, $G$ is isomorphic to $\Z^n \oplus T$ for some 
$n > 0$ and some finite abelian group $T$. We first give a Scott sentence for $\Z^n$.  

\begin{lem}

We take the conjunction of the abelian group axioms and the following sentences to obtain a Scott sentence for $\Z^n$.

\begin{enumerate}

\item $(\forall x) [ x \ne 0 \rightarrow \displaystyle\bigdoublewedge_{n > 0} nx \ne 0]$

\item $(\exists x_1) \cdots (\exists x_n) \left[ \left( (\forall y) \displaystyle\bigdoublewedge_{1 \le i \le n; \; k > 0} ky \ne x_i \right) \wedge \displaystyle\left(\bigdoublewedge_{k_i \emph{\text{ not all }} 0} k_1x_1 + \cdots + k_nx_n \ne 0 \right) \right]$

\item $(\forall x_1) \cdots (\forall x_{n+1})\displaystyle\bigdoublevee_{k_i \emph{\text{ not all }} 0} k_1x_1 + \cdots + k_{n+1}x_{n+1} = 0$
\end{enumerate}
\end{lem}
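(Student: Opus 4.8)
The plan is to show that the conjunction of the abelian group axioms with the three listed sentences is a Scott sentence for $\Z^n$, by verifying that $\Z^n$ satisfies each sentence and that, conversely, any countable model $A$ of all of them is isomorphic to $\Z^n$. The three sentences have a clear intended meaning: sentence (1) is a computable $\Pi_2$ assertion that the group is torsion-free; sentence (2) is a computable $\Sigma_2$ assertion that there exist $n$ elements $x_1,\dots,x_n$ that are each non-divisible and $\Z$-linearly independent; and sentence (3) is a computable $\Pi_1$ (hence $\Pi_2$) assertion that any $n+1$ elements are $\Z$-linearly dependent, so the torsion-free rank is at most $n$. The conjunction of (1) with (2) and (3) is thus computable $d$-$\Sigma_2$, which is the complexity we want.

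First I would check that $\Z^n$ satisfies all three sentences, which is routine: $\Z^n$ is torsion-free, the standard basis vectors witness (2), and any $n+1$ elements of a free abelian group of rank $n$ satisfy a nontrivial integer relation, giving (3). The substance is the converse. So let $A$ be a countable abelian group satisfying (1)--(3). By (1), $A$ is torsion-free, so it embeds in the $\Q$-vector space $A \otimes \Q$, and its rank (the dimension of $A \otimes \Q$) equals the maximal number of $\Z$-linearly independent elements of $A$. Sentence (2) produces $n$ independent elements, so the rank is at least $n$, while sentence (3) forces the rank to be at most $n$; hence $A$ has rank exactly $n$. Fix the witnessing tuple $\bar{x} = (x_1,\dots,x_n)$ from (2); it is a maximal independent set and therefore a basis of $A \otimes \Q$.

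The key step is then to show that $\bar{x}$ actually generates $A$ over $\Z$, so that $A \cong \Z^n$ via $x_i \mapsto e_i$. Here is where the divisibility clause of sentence (2) does the work. Given any $a \in A$, independence of the rank-$n$ tuple forces a relation $m a = k_1 x_1 + \cdots + k_n x_n$ for some nonzero integer $m$ and integers $k_i$; take $m > 0$ minimal. The main obstacle is to rule out $m > 1$: I would argue that if some prime $p$ divided $m$, one could use minimality of $m$ together with torsion-freeness to exhibit an element $y$ and an index $i$ with $p y = x_i$ (after a change of the integer combination, possibly invoking that the $k_i$ are not all divisible by $p$), contradicting the non-divisibility clause $(\forall y)\bigdoublewedge_{i,k>0} ky \ne x_i$ of sentence (2). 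Making this reduction precise---showing that non-minimal $m$ forces a genuine division of one of the $x_i$---is the delicate part and will use the fact that $\bar{x}$ is a $\Q$-basis to pin down the coefficients uniquely. Once $m = 1$ is established for every $a$, we get $A = \la x_1,\dots,x_n \ra = \Z^n$, completing the proof that the displayed sentence is a Scott sentence for $\Z^n$.
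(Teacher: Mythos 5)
The step you flag as ``the delicate part''---showing that if the minimal $m>1$ with $ma \in \la \bar{x}\ra$ has a prime factor $p$, then some $x_i$ is divisible by $p$---is a genuine gap, and it cannot be repaired, because the claim is false; indeed the lemma as literally stated fails for $n \ge 2$. First, witnesses of sentence (2) need not generate, even in $\Z^2$: the pair $(1,0)$, $(1,2)$ is linearly independent and neither element is divisible by any $k>1$, yet it generates only the index-two subgroup of points with even second coordinate, so no argument can establish your key claim that the witnessing tuple $\bar{x}$ generates $A$. Second, and worse, consider $A = \Z \oplus \Z[1/2]$, where $\Z[1/2] \subseteq \Q$ is the group of dyadic rationals. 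Then $A$ is torsion-free of rank $2$, so it satisfies sentences (1) and (3); and the pair $x_1 = (1,0)$, $x_2 = (1,1)$ is independent with neither element divisible by any $k>1$ (if $ky = x_i$ then the first coordinate $u \in \Z$ of $y$ satisfies $ku = 1$, forcing $k = 1$), so $A$ satisfies sentence (2), reading the subscript $k>0$ as $k>1$ (as you implicitly do; with $k=1$ allowed the sentence is unsatisfiable). But $A \not\cong \Z^2$, since the nonzero element $(0,1)$ of $A$ is divisible by every power of $2$ and no nonzero element of $\Z^2$ is. Your proposed reduction fails concretely here: for $a = (0,1/2)$ the minimal relation is $2a = x_2 - x_1$, yet neither $x_1$ nor $x_2$ is divisible by $2$ in $A$.

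The fix is to strengthen the first conjunct of sentence (2) from individual non-divisibility of the $x_i$ to purity of the subgroup $\la x_1, \ldots, x_n\ra$: for all $y$, all $m > 1$, and all integer tuples $(k_1, \ldots, k_n)$ not all divisible by $m$, $my \ne k_1x_1 + \cdots + k_nx_n$. This is still a universally quantified computable conjunction of inequations, so the modified sentence (2) remains computable $\Sigma_2$ and the whole conjunction remains computable $d$-$\Sigma_2$. With purity, your argument closes immediately and needs no minimality induction: given $a \in A$, sentence (3) together with independence of $\bar{x}$ yields $ma = k_1x_1 + \cdots + k_nx_n$ for some $m \ge 1$; purity forces $m \mid k_i$ for every $i$; and torsion-freeness then gives $a = (k_1/m)x_1 + \cdots + (k_n/m)x_n \in \la\bar{x}\ra$. (The standard basis of $\Z^n$ clearly satisfies the purity condition, so $\Z^n$ is still a model.) For comparison, the paper's own proof merely restates what the three sentences say and asserts that the conjunction ``clearly'' describes $\Z^n$ up to isomorphism; it never confronts this issue, and your attempt to supply the missing details has in fact located a real error in the statement rather than a removable difficulty in your argument.
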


Sentence (1) says that $\Z^n$ is torsion-free. Sentence (2) asserts the existence of $n$ linearly independent elements not divisible by any $k$.  Finally, Sentence (3) says that every set of $n+1$ elements is linearly dependent.  It is clear that the conjunction describes $\mathbb{Z}^n$ up to isomorphism.  We may also use the sentence to describe the torsion-free component of $G$. 

\bigskip  

We now focus on the torsion component of $G$.  This is a finite group.  We recall a result from \cite{idxsets}.  

\begin{thm}[Calvert-Harizanov-K-Miller]
\label{finstruct}

Let $L = (R_1, \ldots, R_\ell)$ be a finite relational language, and let $\mathcal{A}$ be a finite $L$-structure.  Then $\mathcal{A}$ has a finitary Scott sentence that is the conjunction of an existential sentence and one that is universal.  

\end{thm}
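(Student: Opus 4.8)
The plan is to show that any finite $L$-structure $\mathcal{A}$, with $L$ a finite relational language, can be pinned down up to isomorphism by the conjunction of a single existential sentence and a single universal sentence. The existential half will assert the existence of a tuple enumerating the entire universe with exactly the atomic relations that hold in $\mathcal{A}$, and the universal half will assert that no further elements exist and no forbidden relations hold. Since the language is finite relational and $\mathcal{A}$ is finite, every atomic fact is one of finitely many, so each of these two assertions is finitary.

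First I would fix an enumeration $a_1, \ldots, a_m$ of the universe of $\mathcal{A}$, where $m = |\mathcal{A}|$. Let $\delta(x_1, \ldots, x_m)$ be the conjunction, over all atomic formulas $R_j(x_{i_1}, \ldots, x_{i_{r_j}})$ in the $x$'s, of $R_j(\ol{x})$ when that relation holds of the corresponding $a$'s in $\mathcal{A}$ and of $\neg R_j(\ol{x})$ when it does not; I would also include $x_p \ne x_q$ for all $p \ne q$. This $\delta$ is a quantifier-free formula completely specifying the isomorphism type of the named tuple. The \emph{existential} sentence is then $(\exists x_1)\cdots(\exists x_m)\,\delta(\ol{x})$, which says $\mathcal{A}$ embeds into any model. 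The \emph{universal} sentence is $(\forall x_1)\cdots(\forall x_{m+1})\,\bigvee_{p \ne q} x_p = x_q$, forcing the universe to have size at most $m$. Taking the conjunction of these two finitary sentences gives the desired Scott sentence: any model has exactly $m$ elements (at most $m$ from the universal sentence, at least $m$ from the distinctness clauses in $\delta$), and the witnessing tuple realizes precisely the relational diagram of $\mathcal{A}$, so the map $a_i \mapsto x_i$ is an isomorphism.

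The main thing to verify carefully is that $\delta$ truly determines the full atomic diagram, not merely the positive relations. Because $L$ is a finite relational language and only finitely many tuples can be formed from $x_1, \ldots, x_m$, the collection of atomic formulas is finite, so I can genuinely list, for each relation symbol and each argument tuple, whether it holds; including the negative literals is what guarantees that a model of $\delta$ cannot satisfy any relation that $\mathcal{A}$ omits. The distinctness clauses $x_p \ne x_q$ ensure the witnesses are genuinely $m$ distinct points, which, combined with the universal cardinality bound, forces equality of cardinality rather than merely a surjection or injection.

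I expect the only subtlety — and the closest thing to an obstacle — to be bookkeeping rather than mathematics: confirming that the language being purely relational is what keeps everything finitary and quantifier-free, since the presence of function symbols would require handling term values and could introduce unbounded nesting. Here, however, the hypothesis rules that out, so each conjunct of $\delta$ is a literal in finitely many variables, the existential sentence has existential quantifier prefix over a quantifier-free matrix, and the universal sentence is universal over a quantifier-free matrix. Hence the conjunction is exactly of the promised form, completing the argument.
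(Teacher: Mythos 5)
Your proposal is correct and takes essentially the same approach as the paper: the existential sentence asserts a tuple realizing the full atomic diagram of $\mathcal{A}$ (your explicit positive literals, negated literals, and distinctness clauses are exactly what the paper packages as ``the conjunction of sentences in $D(\mathcal{A})$''), and the universal sentence bounds the cardinality by $|\mathcal{A}|$.
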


\begin{proof} [Proof]

Let $k$ be the cardinality of $\mathcal{A}$.  We take the conjunction of the following sentences:

\begin{enumerate}

\item $(\exists x_1) \cdots (\exists x_k) \; \delta(x_1, \ldots, x_k)$, where $\delta$ is the conjunction of sentences in $D(\mathcal{A})$

\item $(\forall x_1) \cdots (\forall x_{k+1}) \displaystyle\bigvee_{i \ne j} x_i = x_j$ 

\end{enumerate}
Sentence (1) guarantees that there is a substructure isomorphic to $\mathcal{M}$, and sentence (2) says that the structure does not have $k+1$ distinct elements.  \end{proof}

\noindent
\textbf{Remark}.  The language of groups is not relational.  Nonetheless, Theorem \ref{finstruct} holds for any finite group $T$.  What is important is that we have a finitary quantifier-free formula $\delta_T(\bar{x})$ guaranteeing that $\bar{x}$ forms a group isomorphic to $T$.     

\bigskip

We return to the finitely generated abelian group $G \cong \mathbb{Z}^n \oplus T$, where $T$ is finite. 

\begin{lem}
\label{abdesc}

Let $G$ be a finitely generated abelian group.  Say $G \cong \mathbb{Z}^n \oplus T$, where $T$ is the torsion part.  Let $\delta_T(\bar{x})$ be a finitary quantifier-free formula guaranteeing that $\bar{x}$ is a copy of $T$.  Then we obtain a Scott sentence for $G$ by taking the conjunction of the axioms for abelian groups and the following sentences:

\begin{enumerate}

\item $(\exists x_1) \cdots (\exists x_k) (\forall x) \left(\delta_T(x_1, \ldots, x_k) \wedge \displaystyle\left(\left(\displaystyle\bigdoublewedge_{m > 0} (mx \ne 0)\right) \vee \bigvee_{1 \le i \le k} x = x_i\right)\right)$

\item $(\exists x_1) \cdots (\exists x_n) \left[ \left( (\forall y) \displaystyle\bigdoublewedge_{1 \le i \le n, k > 0} ky \ne x_i \right) \wedge \displaystyle\left(\bigdoublewedge_{k_i \emph{\text{ not all }} 0} k_1x_1 + \cdots + k_nx_n \ne 0 \right) \right]$

\item $(\forall x_1) \cdots (\forall x_{n+1})\displaystyle\bigdoublevee_{k_i \emph{\text{ not all }} 0} k_1x_1 + \cdots + k_{n+1}x_{n+1} = 0$
\end{enumerate}
\end{lem}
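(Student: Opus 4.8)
The plan is to take an arbitrary countable model $M$ of the conjunction of the abelian group axioms with sentences (1)--(3) and show $M \cong G = \mathbb{Z}^n \oplus T$; that $G$ itself models the conjunction is routine, so this suffices to make it a Scott sentence. First I would read off the torsion part from sentence~(1). Its existential block produces $x_1, \dots, x_k$ with $\delta_T(x_1, \dots, x_k)$, so $\{x_1, \dots, x_k\}$ is a subgroup isomorphic to $T$; in particular each $x_i$ is torsion. The universal clause says every element of $M$ is either torsion-free (that is, $mx \neq 0$ for all $m > 0$) or equal to some $x_i$, which is exactly the assertion that $\mathrm{Tor}(M) = \{x_1, \dots, x_k\}$. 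Hence $\mathrm{Tor}(M) \cong T$ and it contains every torsion element of $M$.

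Next I would handle the torsion-free part using sentences (2) and (3), which are the sentences that the preceding lemma uses to describe $\mathbb{Z}^n$. Sentence~(2) supplies an independent, non-divisible $n$-tuple $a_1, \dots, a_n$; independence ($\sum k_i a_i = 0 \Rightarrow$ all $k_i = 0$) shows that $A := \langle a_1, \dots, a_n\rangle \cong \mathbb{Z}^n$. Sentence~(3) forces every $(n{+}1)$-tuple to be dependent, so $M$ has rank exactly $n$ and $M/\mathrm{Tor}(M)$ is torsion-free of rank $n$. Because $A$ is torsion-free we get $A \cap \mathrm{Tor}(M) = 0$ for free, so the remaining task is to prove $A + \mathrm{Tor}(M) = M$; equivalently, that the image of $A$ in $M/\mathrm{Tor}(M)$ exhausts that quotient.

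Granting $A + \mathrm{Tor}(M) = M$, the internal direct sum decomposition $M = A \oplus \mathrm{Tor}(M)$ gives $M \cong \mathbb{Z}^n \oplus T = G$ immediately; one may equally phrase the last step as noting that $M/\mathrm{Tor}(M) \cong \mathbb{Z}^n$ is free, hence projective, so the sequence $0 \to \mathrm{Tor}(M) \to M \to M/\mathrm{Tor}(M) \to 0$ splits. Either way the isomorphism with $G$ follows.

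The hard part will be the surjectivity $A + \mathrm{Tor}(M) = M$, i.e.\ showing that the torsion quotient $M/(A + \mathrm{Tor}(M))$ is trivial. Given $y \in M$, sentence~(3) only yields a dependence $k_0 y - \sum k_i a_i \in \mathrm{Tor}(M)$ with $k_0 \neq 0$, placing $k_0 y$ (not $y$) in $A + \mathrm{Tor}(M)$; passing from $k_0 y$ to $y$ requires $A + \mathrm{Tor}(M)$ to be pure in $M$, and this purity has to be extracted from the non-divisibility clause of sentence~(2). For rank $1$ this is clean, since a rank-$1$ torsion-free group with a non-divisible element is forced to be $\mathbb{Z}$. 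In higher rank it is the genuinely delicate point: non-divisibility of the individual $a_i$ need not descend to $M/\mathrm{Tor}(M)$, and even in the torsion-free quotient one must still exclude groups such as $\mathbb{Z}[1/2] \oplus \mathbb{Z}$, which are torsion-free of rank $2$ and do contain independent non-divisible pairs yet are not free. Settling this point is where I would concentrate the work, very likely by strengthening the non-divisibility clause of sentence~(2) so that it asserts the chosen tuple generates a free direct summand of full rank rather than merely consisting of non-divisible elements.
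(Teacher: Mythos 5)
Your reduction is correct as far as it goes---sentence (1) pins down $\mathrm{Tor}(M)\cong T$, sentences (2) and (3) give a free subgroup $A\cong\mathbb{Z}^n$ and rank exactly $n$, and everything comes down to $A+\mathrm{Tor}(M)=M$---but the difficulty you isolate at the end is not a gap that more work can close: it is a refutation of the lemma as stated, and your own example already does the job. Take $n=2$, $T=0$, and $M=\mathbb{Z}[1/2]\oplus\mathbb{Z}$. Then $M$ is a countable torsion-free abelian group, so the group axioms and sentence (1) hold; the pair $x_1=(1,1)$, $x_2=(0,1)$ is linearly independent, and neither element is divisible by any $k>1$, since divisibility would require solving $kw=1$ in the second (integer) coordinate; and every triple in $M$ is dependent because $M$ has rank $2$. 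So $M$ satisfies the whole conjunction, yet $M\not\cong\mathbb{Z}^2$, as $M$ is not finitely generated. Adding a summand $\mathbb{Z}^{n-2}\oplus T$ gives countermodels for every $n\geq 2$ and every $T$. Moreover, the second issue you mention in passing---that indivisibility of the chosen elements need not descend to $M/\mathrm{Tor}(M)$---kills the lemma even for $n=1$ once $T\neq 0$: in $M=\mathbb{Z}[1/2]\oplus\mathbb{Z}/2$ the element $(1,t)$, with $t$ the nonzero element of $\mathbb{Z}/2$, is divisible by no $k>1$ (odd $k$ fail in the first coordinate, even $k$ in the second), so $M$ satisfies all three sentences written for $\mathbb{Z}\oplus\mathbb{Z}/2$, but $M\not\cong\mathbb{Z}\oplus\mathbb{Z}/2$. (A smaller matter: the clause $(\forall y)\bigwedge_{k>0}ky\neq x_i$ must in any case be read with $k>1$; allowing $k=1$ makes it unsatisfiable.) Thus the lemma is true essentially only for $G\cong\mathbb{Z}$. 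For comparison, the paper's own proof consists of the assertion that sentences (2) and (3) ``remain true for groups with torsion elements'' and that the conjunction ``characterizes $G$ up to isomorphism''; it passes over exactly the purity point you identified, so the error is the paper's, not yours.

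The repair you propose is the right one, and it is precisely the abelian analogue of the Nielsen-primitivity clause in the free-group Scott sentence recalled earlier in the paper (and the way later work such as Ho \cite{Ho} handles finitely generated abelian groups). Replace the indivisibility clause of sentence (2) by: there exist linearly independent $x_1,\dots,x_n$ such that for every $n$-tuple $\bar{y}$, every tuple $\bar{t}$ with $|T|\,\bar{t}=\bar{0}$, and every $n\times n$ integer matrix $A$ with $\det A\neq\pm 1$, we have $A\bar{y}+\bar{t}\neq\bar{x}$. The imprimitive matrices form a computable set, so this is still a computable $\Sigma_2$ sentence, and your outline then closes. In a model $M$ of the corrected conjunction, put $R=\mathrm{Tor}(M)\cong T$ and suppose some $z\notin\langle\bar{x}\rangle+R$. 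Then $N=\langle\bar{x},z\rangle+R$ is finitely generated with torsion exactly $R$ and rank exactly $n$ (by sentences (1) and (3)), so by the fundamental theorem $N=\langle\bar{y}\rangle\oplus R$ for some basis $\bar{y}$. Write $\bar{x}=A\bar{y}+\bar{t}$; here $\det A\neq 0$ because $\bar{x}$ is independent, and the index of $\langle\bar{x}\rangle+R$ in $N$ equals $|\det A|$, which is greater than $1$ because $z$ witnesses that the inclusion is proper---contradicting the primitivity clause. Hence $M=\langle\bar{x}\rangle\oplus R\cong\mathbb{Z}^n\oplus T$. Since the hardness argument (Lemma \ref{abhard}) does not depend on the form of the Scott sentence, Theorem \ref{abcomp} survives intact once the lemma's sentence (2) is corrected in this way.
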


\begin{proof} 

Sentence (1) states that every element is either in $T$ or has infinite order. Sentences (2) and (3) are as in the Scott sentence for $\Z^n$, since they remain true for groups with torsion elements. The above sentences characterize $G$ up to isomorphism.
This finishes the proof that every finitely generated abelian group has a computable $d$-$\Sigma_2$ Scott sentence.  
\end{proof}

Next, we show that the computable $d$-$\Sigma_2$ sentence is optimal.   

\begin{lem}
\label{abhard}

Let $G$ be a finitely generated abelian group.  Then $I(G)$ is $d$-$\Sigma^0_2$-hard.

\end{lem}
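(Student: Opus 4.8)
The plan is to prove $d$-$\Sigma^0_2$-hardness by an $m$-reduction from a fixed complete $d$-$\Sigma^0_2$ set. Write $G \cong \Z^n \oplus T$ with $n \ge 1$ (so $G$ is infinite) and $T$ finite. As the source I will take
\[
S = \{(a,b) : W_a \text{ is finite and } W_b \text{ is infinite}\},
\]
which is $m$-complete $d$-$\Sigma^0_2$: any set of the form $A \cap C$ with $A$ $\Sigma^0_2$ and $C$ $\Pi^0_2$ reduces to $S$ by composing the reductions of $A$ to $\mathrm{Fin}$ and of $C$ to $\mathrm{Inf}$ coordinatewise. The goal is then to build, uniformly and computably from $(a,b)$, an abelian group $B_{a,b}$ with $B_{a,b} \cong G$ exactly when $(a,b) \in S$.

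I would present $B_{a,b}$ as an internal direct sum of three gadgets, $B_{a,b} = H_a \oplus (\Z^{n-1} \oplus T) \oplus D_b$, where the middle summand is a fixed copy of $\Z^{n-1}\oplus T$. The summand $H_a$ encodes the $\Sigma^0_2$ half: let $H_a$ be the subgroup of $\Q$ generated by $1$ together with $1/q_i$ for each $i$ that enters $W_a$, where $q_i$ is the $i$-th prime. Then $1$ is divisible by each prime to at most the first power, only finitely many primes occur iff $W_a$ is finite, and a rank-$1$ group is cyclic iff only finitely many primes divide $1$ and each only finitely often. Hence $H_a \cong \Z$ iff $W_a$ is finite, and otherwise $H_a$ is a non-cyclic, non-finitely-generated rank-$1$ group. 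The key point making this clean is that torsion-free rank $1$ \emph{absorbs} any finite amount of divisibility, so the finitely many primes that occur when $W_a$ is finite do no harm.

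The summand $D_b$ encodes the $\Pi^0_2$ half, and getting its parity right is the main conceptual obstacle: I need the \emph{good} case ($D_b$ trivial, hence $B_{a,b}\cong G$) to correspond to the infinitary condition ``$W_b$ infinite,'' even though infinite activity in a construction naturally tends to build a non-finitely-generated (bad) group. The device is to introduce one potential order-$q$ torsion generator $t_j$ at stage $j$ (so infinitely many are threatened), and to enumerate the relation $t_j = 0$ as soon as the $(j{+}1)$-st element enters $W_b$. Thus $t_j$ dies iff $|W_b| \ge j+1$, so every $t_j$ eventually dies iff $W_b$ is unbounded: if $W_b$ is infinite then all $t_j$ are killed and $D_b = 0$, whereas if $W_b$ is finite with $m$ elements then the whole tail $t_m, t_{m+1}, \dots$ survives as independent elements of order $q$, giving $D_b \cong (\Z/q)^{(\omega)}$, an infinite torsion group. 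With $q$ any fixed prime this is exactly the behaviour required.

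Assembling the gadgets: if $(a,b)\in S$ then $H_a \cong \Z$ and $D_b = 0$, so $B_{a,b} \cong \Z^n \oplus T = G$; if either condition fails then $B_{a,b}$ has either a non-cyclic rank-$1$ summand or infinite torsion, hence is not finitely generated and so not isomorphic to $G$. Thus $(a,b)\in S \iff B_{a,b}\cong G$. What remains, and what I expect to be the main technical obstacle, is to verify that $(a,b)$ can be sent to an index for a genuinely \emph{computable} copy of $B_{a,b}$: the obvious presentation by generators and relations is only c.e.\ (membership in $H_a$ and each event $t_j = 0$ are c.e., not decidable). I would resolve this by the standard effective technique of laying the group out on a computable domain and defining the operation with the adjunction of each $1/q_i$ and the killing of each $t_j$ scheduled on a computable clock, so that the atomic diagram is decidable and uniform in $(a,b)$. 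Granting this routine construction, $S \le_m I(G)$, and therefore $I(G)$ is $d$-$\Sigma^0_2$-hard.
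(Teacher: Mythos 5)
Your source set $S=\{(a,b): W_a \text{ finite and } W_b \text{ infinite}\}$ is indeed $m$-complete $d$-$\Sigma^0_2$, and your $\Sigma^0_2$ gadget is sound: $H_a$ is an infinite c.e.\ subgroup of the computable group $\Q$, so it has a computable copy uniformly in $a$, and $H_a\cong\Z$ iff $W_a$ is finite. The fatal problem is the torsion gadget $D_b$, and it is precisely the step you defer as a ``routine'' effectivization at the end; no scheduling trick can supply it. In a computable copy, the universe is a computable set of constants denoting pairwise distinct elements, and every diagram fact, once declared, is permanent, so ``enumerating the relation $t_j=0$'' later is simply not an available move. Thus either you declare $qt_j=0$ (and $t_j\neq 0$) when $t_j$ is introduced --- and then, when $W_b$ is infinite (your \emph{good} case), you have created infinitely many distinct elements of order $q$, forcing $B_{a,b}\not\cong G$ exactly when you need isomorphism --- or you postpone the declaration; but the diagram must be total, so the facts about each $t_j$ are fixed at some finite stage on the basis of a finite part of $W_b$, and the hypothesis ``$W_b$ has at most $j$ elements'' is never confirmed at any finite stage. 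In fact your specification is impossible outright, not merely hard to implement: in any uniformly computable family of groups, the torsion part $\{x:\exists m\geq 1\ (mx=0)\}$ is uniformly c.e., so ``$B_{a,b}$ has infinite torsion'' is a $\Pi^0_2$ property of $(a,b)$; your design makes this equivalent to ``$W_b$ is finite'', which would place the $m$-complete $\Sigma^0_2$ set $\{b:W_b \text{ finite}\}$ in $\Pi^0_2$, a contradiction.

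This parity problem is exactly what the paper's proof is built to avoid: it keeps every outcome torsion-free. Writing $S=S_1-S_2$, it constructs $\G_n$ isomorphic to $\Z^{k-1}$, $\Z^k$, or $\Z^{k+1}$ according as $n\notin S_1$, $n\in S_1-S_2$, or $n\in S_1\cap S_2$, so the bad outcome on the $\Pi^0_2$ side is a \emph{larger free abelian group}, not an infinite torsion group. The engine is the Fact proved there: any existential formula true of a generating tuple $\bar{c}$ of $\Z^m$ inside $\Z^{m+1}$ is already true of $\bar{c}$ in $\Z^m$. Consequently a generator added during a wrong guess can later be reinterpreted as a linear combination of the remaining generators, consistently with the finitely many diagram facts already committed; generators may be added and reabsorbed infinitely often, and along the true stages the partial isomorphisms union to an isomorphism with the intended finite-rank target. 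Torsion admits no such reabsorption --- once more than $|T|$ constants are committed to order $q$, they can never all be mapped into $\Z^n\oplus T$ --- which is why your gadget fails where the paper's succeeds. Your two-coordinate reduction can be repaired in either of two ways: replace $D_b$ by a rank gadget of the paper's kind (an extra $\Z$-summand added while $W_b$ looks finite and reabsorbed, via the Fact, each time $W_b$ grows), or give up producing a total structure when $W_b$ is finite and instead freeze the enumeration of the diagram during stages at which $W_b$ looks finite, as the paper does for $D_\infty$ in Lemma \ref{dihhard}; then $W_b$ finite yields only a finite fragment of a diagram, which is automatically outside $I(G)$.
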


\begin{proof}

We have $G \cong \mathbb{Z}^k \oplus T$, for some $k\geq 1$ and some finite group $T$.  Let $S$ be an arbitrary $d$-$\Sigma^0_2$ set.  We must show that 
$S\leq_m I(G)$.  To do this, we produce a uniformly computable sequence of groups $(\G_n)_{n\in\omega}$ such that $\G_n\cong G$ if and only if $n\in S$.  For simplicity, we suppose that $G \cong \mathbb{Z}^k$.  We also assume that $k > 1$, since the case $G \cong \Z$ is proved in \cite{idxsets}.  Say that $S = S_1 - S_2$, where $S_1$ and $S_2$ are 
$\Sigma^0_2$ sets. We construct $\G_n$ such that 
\[\G_n \cong \left\{ \begin{array}{ll}
\mathbb{Z}^{k-1} & \mbox{if $n \notin S_1$} \\ 
\mathbb{Z}^k & \mbox{if $n \in S_1 - S_2$} \\
\mathbb{Z}^{k+1} & \mbox{if $n \in S_1 \cap S_2$} \end{array} \right. \]

For each $n \in \N$, we construct the diagram $D(\G_n)$ corresponding to $\G_n$ above.  Let $Z_n$ denote the \emph{target structure} for the construction. That is, let $Z_n = \Z^{k-1}$ if $n \in S_1$, $Z_n = \Z^k$ if $n \in S_1 - S_2$, and $Z_n = \Z^{k+1}$ if 
$n \in S_1 \cap S_2$.  

We recall that for a $\Sigma^0_2$ set $A$, there is a uniformly computable sequence of computable approximations $(A_s)_{s\in\omega}$ such that $n\in A$ if and only if there is some $s_0$ such that for all $s\geq s_0$, $n\in A_s$.  For the $\Sigma^0_2$ sets $S_1$ and $S_2$, we take the standard computable approximation sequences 
$S_{1, s}$ and $S_{2, s}$.  At stage $s$, we believe that $n\in S_i$ just in case $n\in S_{i,s}$.  We will construct an isomorphism $f_n : \G_n \rightarrow Z_n$.  
At stage $s$, we have a partial isomorphism $f_{n, s}$ from $\G_n$ to a structure $Z_{n, s}$, representing our stage $s$ guess at the target structure, where this changes among the three possibilities above exactly as our belief of $n \in S_i$ changes.  Also, at stage $s$, we determine a finite part $d_{n,s}$ of the diagram of 
$\mathcal{G}_n$ such that all constants appearing in $d_{n,s}$ are in the domain of $f_{n,s}$, and $f_{n,s}$ interprets these constants so as to make the sentences of $d_{n,s}$ true.  Below, we say precisely how $f_{n, s}$ is determined.  We must have $d_{n,s}\subseteq d_{n,s+1}$.  For this, we use the following.  

\bigskip
\noindent
\textbf{Fact}.  Let $\bar{c}$ be a generating tuple for $\mathbb{Z}^m$, and let $\bar{c},c'$ generate $\mathbb{Z}^{m+1}$.  Then any existential formula true of $\bar{c}$ in $\mathbb{Z}^{m+1}$ is also true of $\bar{c}$ in $\mathbb{Z}^m$.        

\begin{proof} [Proof of Fact]

We could prove this using results of Szmielew \cite{Szmielew}.  We can also give a direct proof as follows.  There is an elementary extension  $G$ of $\mathbb{Z}^m$ with an element $d$ that is linearly independent of $\bar{c}$.  Let $G'$ be the subgroup of $G$ generated by $\bar{c},d$.  Then 
$(G',\bar{c},d)\cong(\mathbb{Z}^{m+1},\bar{c},c')$.  Any existential formula true of $\bar{c}$ in $\mathbb{Z}^{m+1}$ is true in $G'$, so it is true in $G$, and in 
$\mathbb{Z}^m$.
\end{proof}     

Let $C$ be an infinite computable set of constants.  For all $n$, this will serve as the universe of $\mathcal{G}_n$, and the domain of $f_n$. We start by assuming that $n \notin S_{1, 0}$, so that at stage $0$, our target structure is $Z_{n, 0} = \mathbb{Z}^{k-1}$.  We designate initial generators $a_1, \ldots, a_{k-1} \in C$.  Passing from stage $s$ to $s+1$, if we see that $n \in S_{i, s}$ and $n \in S_{i, s+1}$ (or, respectively, if $n \notin S_{i, s}$ and $n \notin S_{i, s+1}$), then our belief of $n \in S_i$ has not changed, and we extend $f_{n,s}$ to $f_{n,s+1}$, with the same target structure.  However, if our guess at whether $n \in S_i$ changes, then the target structure also changes.  We consider the different kinds of changes that may occur.  

First, suppose $n \notin S_{1, s}$ but $n \in S_{1, s+1}$. Then we add a new generator $a$ that is independent of the current generators $a_i$ and extend $f_{n,s}$ accordingly, using the target structure $Z_{n, s+1} = \Z^k$.  On the other hand, if we see $n \in S_{1, s}$ but $n \notin S_{1, s+1}$, our target structure falls back to $Z_{n, s+1} = \Z^{k-1}$.  In this case, $f_{n,s+1}$ extends $f_{n,s'}$ for the greatest $s' < s$ such that $n\notin S_1$.  By the fact above, we can express the extra generator as a linear combination of the others.  Now, suppose our target structure changes because of $S_2$.  As in the case of $S_1$, we either introduce or collapse a generator.  Say $n \in S_{1, s+1}$, and $n \notin S_{2, s}$, but $n \in S_{2, s+1}$.  Then we add a new element $a$, independent of the current generators $a_i$, and we extend $f_{n,s}$ accordingly.  If for some $s'' > s$, we see $n \in S_{2, s''}$ but $n \notin S_{2, s''+1}$, then we collapse $a$.  Then $f_{n,s''+1}$ extends $f_{n,s'}$ for the greatest $s' < s''$ such that $n\in S_{1,s'} - S_{2,s'}$.    

Given the partial isomorphisms $(f_s)_{s \in \omega}$, we build $f : \G_n \rightarrow Z_n$.  If $n \notin S_1$, then we take the union of all $f_{n,s}$ such that 
$n \notin S_{1, s}$. If $n \in S_1 - S_2$, then for some $s_1$, $n \in S_{1, s}$ for all $s \ge s_1$, so we take the union of $f_{n,s}$ for $s \ge s_1$ such that $n \notin S_{2, s}$. If $n \in S_1 \cap S_2$, then for some $s_2$, $n \in S_{2, s}$ for all $s \ge s_2$, so we take the union $f_{n,s}$ for $s \ge s_2$. This gives us the required isomorphism between $\G_n$ and $Z_n$, and therefore gives a uniform way of computing the $\G_n$. So $S$ is reducible to $I(G)$.  This completes the proof of Lemma \ref{abhard}.  
\end{proof}

By Lemma \ref{abdesc}, $G$ has a computable $d$-$\Sigma_2$ Scott sentence, so $I(G)$ is $d$-$\Sigma^0_2$.  By Lemma \ref{abhard}, $I(G)$ is $m$-complete $d$-$\Sigma^0_2$.  This completes the proof of Theorem \ref{abcomp}.
\end{proof}

\subsection{The infinite dihedral group} 

In this subsection, we look at an example of a group that is neither free nor abelian, but still has a computable 
$d$-$\Sigma_2$ Scott sentence. 

\begin{defn}

The infinite dihedral group $D_\infty$ is defined by the standard presentation $\langle a, b \; | \; a^2 = b^2 = 1 \rangle$. 

\end{defn}

We are grateful to the referee for providing a reference to work of Benois \cite{Benois}, showing the following. We give a proof below, because the reference \cite{Benois} is somewhat difficult to find, and because it is relevant to the following proofs. We apply the result of Benois to show that $D_\infty$ has a computable $d$-$\Sigma_2$ Scott sentence.

\begin{prop} [Benois]
\label{Benois}

Let $G$ be a copy of $D_\infty$ with presentation $\la a,b \; | \; a^2 = b^2 = 1\ra$.  Then we can effectively determine whether a given pair of words $\bar{w}$ takes $(a,b)$ to a generating pair.      

\end{prop}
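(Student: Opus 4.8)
The plan is to reduce the question ``does $\bar{w}$ take $(a,b)$ to a generating pair of $D_\infty$?'' to a decidable combinatorial property of words by exploiting the very explicit structure of $D_\infty$. The key observation is that $D_\infty$ has a canonical normal form: every nonidentity element is uniquely a reduced alternating product $aba\cdots$ or $bab\cdots$, i.e.\ a word in which $a$ and $b$ strictly alternate. Since $a^2=b^2=1$, the word problem is solved by freely reducing and then cancelling adjacent repeated letters, so given any word $w(a,b)$ I can compute its unique alternating normal form, and in particular decide whether $w(a,b)=1$. Thus the atomic diagram of $G$ is computable, and evaluating the pair $\bar{w}=(w_1,w_2)$ at $(a,b)$ is effective.

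The main step is to characterize which pairs $(g_1,g_2)$ of elements generate $D_\infty$, and to check that this characterization is decidable once we have normal forms. Here I would use the structure of $D_\infty$ as the infinite dihedral group: it has a unique infinite cyclic subgroup of index $2$, namely the ``rotations'' $\langle ab\rangle$, and the remaining coset consists of ``reflections'' (involutions). A pair generates $D_\infty$ exactly when it is not contained in any proper subgroup; the proper subgroups are understood (finite dihedral-type and cyclic subgroups of the rotation subgroup, plus subgroups generated by a single reflection). Concretely, I expect the clean criterion to be: $(g_1,g_2)$ generates $D_\infty$ iff at least one of $g_1,g_2$ is a reflection (an involution lying outside $\langle ab\rangle$) and the subgroup they generate contains a rotation $(ab)^{\pm m}$ with the right ``primitivity''—that is, the generated rotation subgroup is all of $\langle ab\rangle$, not a proper $\langle (ab)^d\rangle$ for $d>1$. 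Each of these conditions translates into reading off, from the normal forms of $w_1(a,b)$ and $w_2(a,b)$, whether each is a reflection or a rotation and, for rotations, the exponent $m$ with respect to $ab$; then deciding generation amounts to a $\gcd$ computation on those exponents together with a parity/coset check. All of this is manifestly computable.

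I would carry out the argument in this order: first record the normal-form algorithm and the rotation/reflection dichotomy; second, compute from $\bar{w}$ the normal form of each $w_i(a,b)$ and classify each as rotation or reflection, extracting the rotation exponent when applicable; third, state the generation criterion in terms of these data and verify it decides membership of $\langle w_1(a,b),w_2(a,b)\rangle$ equal to the whole group; fourth, conclude that the overall procedure is effective.

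The hard part will be pinning down the exact generation criterion so that it is both correct and visibly decidable, since a pair of reflections generates only a dihedral subgroup (finite unless they are ``far apart'' in a suitable sense), and a pair containing a rotation requires a reflection present to escape the rotation subgroup. Getting the $\gcd$/coset condition precisely right—so that $\langle w_1(a,b),w_2(a,b)\rangle=D_\infty$ is captured exactly—is where the real content lies; once the criterion is correct, decidability is immediate from the normal-form computation, which is why I expect the cited work of Benois to supply essentially this rational/automatic description of the generating pairs.
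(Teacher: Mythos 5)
Your overall strategy is sound and genuinely different from the paper's. The paper does not invoke the structure theory of $D_\infty$ at all: it argues by induction on total word length that every pair of words can be carried by Nielsen transformations, each step strictly shortening, to one of three explicit base forms ($(w,\epsilon)$, pairs of the form $(a(ba)^*, b(ab)^*)$, or $(a,b)$ itself), each of which is visibly a generating pair or not; the shortening procedure itself is the algorithm. That route buys something yours does not: it shows that a generating pair is carried to $(a,b)$ or $(b,a)$ by explicit Nielsen moves justified by the relations $a^2=b^2=1$ alone, and this is precisely what gets reused in Lemma \ref{lem2} and in the verification that the \dst{} sentence is a Scott sentence for $D_\infty$ (the paper says as much when it explains why it includes a proof of Proposition \ref{Benois}). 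Your route instead decides generation directly from the semidirect-product picture --- the index-$2$ infinite cyclic subgroup $\langle ab\rangle$, everything outside it an involution --- which is cleaner and more self-contained for the proposition as stated, and closer in spirit to Benois's rational-subsets viewpoint.

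The genuine gap is the one you flagged yourself: you never state or prove the generation criterion, and both of your heuristic guesses about it are wrong, so the proof as outlined cannot be finished by ``a gcd computation'' as you describe it. Write $t=ab$, so every element has normal form $t^m$ (rotation) or $t^m a$ (reflection), and note the single identity that does all the work: $\rho t^k \rho^{-1}=t^{-k}$ for every reflection $\rho$. Then: (i) two rotations generate a subgroup of $\langle t\rangle$, hence never generate $D_\infty$; (ii) a rotation $t^m$ and a reflection $\rho=t^n a$ generate exactly $\langle t^m\rangle\cup\langle t^m\rangle\rho$, so the pair generates $D_\infty$ if and only if $|m|=1$, \emph{independently of $n$} --- the naive gcd of the two exponents is the wrong invariant, e.g.\ $(t^2,t^3a)$ has coprime exponents but generates only the index-$2$ subgroup $\{t^{2k}:k\in\Z\}\cup\{t^{2k+3}a:k\in\Z\}$; (iii) two reflections $t^m a$ and $t^n a$ generate $\langle t^{m-n}\rangle\cup\langle t^{m-n}\rangle t^n a$, so the pair generates $D_\infty$ if and only if $|m-n|=1$; in particular your aside that a pair of reflections generates a group that is ``finite unless they are far apart'' is backwards twice over: the group is infinite whenever $m\neq n$, and it is all of $D_\infty$ exactly when the two reflections are as close as possible. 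Each of (i)--(iii) is a two-line computation from the identity above, and with them in place decidability is immediate from the normal-form computation, so your proof would then be complete. One caution if you compare your finished argument with the paper: criterion (iii) shows that $(aba,a)=(ta,a)$ is a generating pair of involutions, so the literal statement of Lemma \ref{lem2} is too strong; what is true, and what the later Scott-sentence argument actually needs, is that every generating pair of involutions is a pair of adjacent reflections, i.e., lies in the automorphism orbit of $(a,b)$.
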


\begin{proof}

Recall that $\bar{w}$ is primitive if it takes $(a,b)$ to a pair in the orbit of $(a,b)$; i.e., $\bar{w}(a,b)$ is a generating pair satisfying exactly the same relations as $(a,b)$. Before starting the proof, we recall regular expressions (see Sipser \cite{sipser}). Let $S$ be any set.  Then a regular expression is a syntactic object that represents a set of strings in $S$.  We define \emph{regular expressions} on $S$, and the corresponding sets of strings, inductively, as follows:

\begin{enumerate}

\item $\epsilon$ is a regular expression, called the \emph{empty expression}.  The corresponding set of strings is $\emptyset$.

\item For each $s \in S$ there is a regular expression $x_s$, with corresponding set of strings given by $\{s\}$. It is common to simply write $s$ instead of $x_s$.

\item If $x$ and $y$ are regular expressions, then the \emph{concatenation}, $xy$, is a regular expression.  The corresponding set of strings consists of the strings $vw$, where $v$ is in the set corresponding to $x$ and $w$ is in the set corresponding to $w$.  

\item If $x$ and $y$ are regular expressions, then the \emph{union}, $x \cup y$, is a regular expression.  The corresponding set of strings is the union of the set corresponding to $x$ and that corresponding to $y$.   

\item If $x$ is a regular expression, then the \emph{Kleene star}, $x^*$, is a regular expression.  The corresponding set of strings is the set of all finite concatenations of strings in the set corresponding to $x$.      

\end{enumerate}
In practice, if $x$ is a regular expression, and $w$ is a word on $S$, we say \emph{$w$ has form $x$}, or \emph{$w\in x$}, if $w$ is in the set of strings corresponding to $x$.

The relations $a^2 = b^2 = 1$ guarantee that $a^{-1} = a$ and $b^{-1} = b$.  We begin the proof with the assumption that any word representing an element of $D_\infty$ has form $(ab)^*(a \cup \epsilon)$ or $(ba)^*(b \cup \epsilon)$, since any consecutive pair $aa$ or $bb$ may be eliminated. This significantly restricts the form a word may take: a word $w$ in $D_\infty$ is now determined by the last letter of $w$, and the length of $w$.  We start with the base cases:
\begin{enumerate}
\item $(w_1', \epsilon)$ is not a generating pair,

\item a pair of the form $(a(ba)^*, b(ab)^*)$ is not a generating pair unless it is $(a, b)$,

\item $(a, b)$ is a generating pair.

\end{enumerate}
Pairs satisfying (1) and (2) are not generating pairs because they do not generate $D_\infty$, except for the pair $(a,b)$ itself.

We prove that every pair of words is Nielsen equivalent to one of these cases. With these base cases, we use induction on the total word length of a pair.  Consider a pair of words $(w_1, w_2)$ such that $|w_1| + |w_2| = n$, and inductively assume that every pair of words $(w_1', w_2')$ such that $|w_1'| + |w_2'| < n$ is equivalent to one of the base cases above. We show that either $(w_1, w_2)$ \emph{is} one of the base cases, or we may find a pair $(w_1', w_2')$ that is equivalent to 
$(w_1, w_2)$, yet strictly shorter in total word length.

Suppose $(w_1, w_2)$ is not one of the above base cases, and without loss of generality, suppose $|w_1| \ge |w_2|$. Again, without loss of generality, we suppose that $w_1$ starts with $a$. We now consider two cases: $w_2$ starts with $a$, and $w_2$ starts with $b$. Suppose $w_2$ starts with $a$. Then $w_2$ is an initial segment of $w_1$. Also, observe that $w_2^{-1}$ is simply the reverse of $w_2$. Therefore $(w_1, w_2^{-1}w_1)$ is an equivalent pair that is strictly shorter in total word length.  Now, we suppose that $w_2$ starts with $b$. We have two further sub-cases: $w_1$ does not end in $a$, or $w_2$ does not end in $b$ (both may be true). Suppose $w_2$ does not end in $a$, in which case it ends in $b$. Then $(w_1w_2, w_2)$ is shorter than $(w_1, w_2)$. Suppose instead that $w_1$ does not end in $b$, in which case it ends in $a$. Then $(w_2w_1, w_2)$ is shorter than $(w_1, w_2)$.  

Thus, by induction, any $(w_1, w_2)$ is equivalent to one of the base case forms. This means we can keep shortening our pair of words until we reach a base case, which gives us an algorithm to do so. This in turn gives us an algorithm to decide whether $(w_1(a, b), w_2(a, b))$ is a generating pair. 

\end{proof}

\begin{lem}
\label{lem2}

The only generating pairs $(x_1,x_2)$ that satisfy $x_1^2 = x_2^2 = 1$ are $(a,b)$ and $(b,a)$.  

\end{lem}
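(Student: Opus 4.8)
The plan is to reduce the statement to a short computation inside $D_\infty$ by first classifying the elements of order dividing $2$. Using the relations $a^2=b^2=1$, which give $a^{-1}=a$ and $b^{-1}=b$, every element of $D_\infty$ has a reduced alternating form $(ab)^k$, $(ab)^k a$, $(ba)^k$, or $(ba)^k b$. A direct check (cancelling the two equal middle letters of $x\cdot x$) shows that $x^2=1$ holds exactly when $x$ is the identity or $x$ is one of the odd-length alternating words $(ab)^k a$ or $(ba)^k b$; these are the reflections. In a generating pair the identity can be discarded, since it contributes nothing and the remaining element then generates a group of order at most $2$. Hence in any generating pair $(x_1,x_2)$ with $x_1^2=x_2^2=1$, both $x_1$ and $x_2$ are reflections.

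Next I would normalize the reflections using the infinite-order element $r=ab$. Since $b=r^{-1}a$ and $ar^m a=r^{-m}$, every reflection is uniquely $r^m a$ for some $m\in\Z$, with $a=r^0a$ and $b=r^{-1}a$. Writing $x_1=r^i a$ and $x_2=r^j a$, the product is $x_1x_2=r^i(ar^j a)=r^{\,i-j}$, a rotation, and $x_2=x_1\,(x_1x_2)$, so $\langle x_1,x_2\rangle=\langle r^i a,\; r^{\,i-j}\rangle$. Here $\langle r^{\,i-j}\rangle$ is normalized by the reflection $x_1$ (which inverts rotations) with quotient of order at most $2$, so the rotation subgroup of $\langle x_1,x_2\rangle$ is exactly $\langle r^{\,i-j}\rangle$. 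Thus $\langle x_1,x_2\rangle=D_\infty$ if and only if $\langle r^{\,i-j}\rangle=\langle r\rangle$, i.e. $i-j=\pm 1$. So the generating pairs of reflections are precisely those with adjacent indices, and these are exactly the pairs in the orbit of $(a,b)=(r^0a,r^{-1}a)$ under automorphisms (the assignment $a\mapsto x_1$, $b\mapsto x_2$ respects the only defining relations $x_1^2=x_2^2=1$ and is onto, hence an automorphism since $D_\infty$ is Hopfian).

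To match the base-case form of the statement, I would then invoke Proposition \ref{Benois}: the Benois reduction carries any such pair, through Nielsen transformations, to a base case. Base case (1) is ruled out since $\epsilon$ is not a reflection, and among the base-case pairs $(a(ba)^*,b(ab)^*)$ of case (2) the adjacency condition $i-j=\pm1$ forces the pair to be $(a,b)$; case (3) is $(a,b)$ itself, and the symmetry exchanging coordinates also yields $(b,a)$. Hence every generating pair of involutions reduces to $(a,b)$ or $(b,a)$. The main obstacle is the generation criterion of the second step: the ``only if'' direction requires seeing that non-adjacent indices produce a proper rotation subgroup $\langle r^{\,i-j}\rangle\subsetneq\langle r\rangle$, hence a proper subgroup of $D_\infty$. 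Once the reflections are put in the normalized form $r^m a$ this is immediate, so the real work is the bookkeeping that establishes that normal form.
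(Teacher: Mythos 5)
Your core computation is correct, and it is worth being precise about what it actually proves. With $r=ab$, the involutions of $D_\infty$ are exactly the reflections $r^m a$, and a pair of reflections $(r^ia,r^ja)$ generates $D_\infty$ if and only if $i-j=\pm 1$; equivalently (your Hopfian argument gives this directly), the generating pairs of involutions are exactly the pairs in the automorphism orbit of $(a,b)$. But that orbit is infinite, so your work in fact \emph{refutes} Lemma \ref{lem2} as literally stated: the pair $(aba,a)=(ra,r^0a)$ satisfies $(aba)^2=a^2=1$ and generates $D_\infty$ (it yields $aba\cdot a=ab$), yet it is neither $(a,b)$ nor $(b,a)$. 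The paper's own proof has the same defect: it asserts that a generating pair other than $(a,b)$, $(b,a)$ must have a component of reduced form $a(ba)^*b$ or $b(ab)^*a$, i.e., a nontrivial power of $r$, and $(aba,a)$ contradicts that assertion. The honest fix is to restate the lemma in orbit form, and that restated version is exactly what the later Scott-sentence argument needs: in verifying $\sigma_3$, one only needs that a pair $(y_1,y_2)$ with $y_1^2=y_2^2=1$ which some word pair $\bar{w}$ carries onto a standard generating pair lies in the orbit of that pair, whence $\bar{w}$ is primitive.

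The genuine gap in your write-up is the final paragraph, where you try to force your (correct) orbit characterization into the literal wording of the lemma via Proposition \ref{Benois}. Nielsen equivalence is not equality: the elementary moves do not preserve the property of being a pair of involutions (e.g., $(a,aba)\mapsto(a,ab)$), and \emph{every} generating pair of $D_\infty$ Nielsen-reduces to $(a,b)$ --- that is precisely what the Benois algorithm shows --- so the conclusion ``every generating pair of involutions reduces to $(a,b)$ or $(b,a)$'' is true but vacuous, and it does not yield the statement that such a pair \emph{equals} $(a,b)$ or $(b,a)$. No argument could, since that statement is false. If you replace the stated conclusion by the orbit statement, the first two-thirds of your proposal proves it cleanly --- and more rigorously than the paper, whose word-by-word reduction is here replaced by the transparent normal form $r^ma$ coming from the semidirect product structure.
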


\begin{proof}

As in the proof of Proposition \ref{Benois} above, if $(x_1,x_2)$ is a generating pair that is not simply $(a,b)$ or $(b,a)$, then one of $x_1$ or $x_2$ must have a reduced form of either $a(ba)^*b$ or $b(ab)^*a$.  The square of such an $x_i$ is not equal to $1$, therefore $x_i^2 \ne 1$, as required. On the other hand, it is clear that $(a, b)$ and $(b, a)$ satisfy the above equality.    
\end{proof}

A pair of words $\bar{w}(x_1,x_2)$ is primitive (with respect to the standard presentation of $D_\infty$) if $\bar{w}(a,b)$ is in the orbit of $(a,b)$, and otherwise, it is imprimitive.  We can effectively determine, for a pair of words $\bar{w}(a,b)$, whether it gives a generating pair, and whether it is equal, after reduction, to $(a,b)$ or $(b,a)$.  Thus, the set of primitive pairs of words, and the set of imprimitive pairs of words, are both computable.           

\begin{prop}

There is a computable $d$-$\Sigma_2$ Scott sentence for $D_\infty$.

\end{prop}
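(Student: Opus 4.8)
The plan is to imitate the free-group Scott sentence, using the computability of the set of imprimitive pairs of words (noted just above) in place of Nielsen's algorithm, but with one essential modification forced by Lemma \ref{lem2}. Let $\Imp$ denote the computable set of pairs of words $\bar w = (w_1,w_2)$ that are imprimitive for the standard presentation, i.e.\ $\bar w(a,b)\notin\{(a,b),(b,a)\}$. I would take as the key formula the computable $\Pi_1$ formula
\[
\theta(x_1,x_2)\ \equiv\ x_1^2 = 1\ \wedge\ x_2^2 = 1\ \wedge\ (\forall y_1)(\forall y_2)\Big[\, y_1^2 = 1 \wedge y_2^2 = 1\ \rightarrow\ \bigwedge_{\bar w\in\Imp}\bar w(y_1,y_2)\neq (x_1,x_2)\,\Big],
\]
and then let the Scott sentence $\Phi$ be the conjunction of (i) the group axioms, (ii) the sentence $(\exists x_1)(\exists x_2)\,\theta(x_1,x_2)$, and (iii) the sentence $(\forall x_1)(\forall x_2)\big[\theta(x_1,x_2)\rightarrow (\forall y)\bigvee_w y = w(x_1,x_2)\big]$, the disjunction ranging over all words $w$. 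Since $\theta$ is $\Pi_1$, clause (ii) is computable $\Sigma_2$ and clause (iii) is computable $\Pi_2$, so $\Phi$ is computable $d$-$\Sigma_2$.

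The first task is to check that in $D_\infty$ the formula $\theta$ defines exactly the orbit $\{(a,b),(b,a)\}$ of $(a,b)$. For the inclusion of the orbit, suppose $\bar w(\bar y)=(a,b)$ with $y_1^2=y_2^2=1$ and $\bar w\in\Imp$; since $(a,b)$ generates, so does $\bar y$, and Lemma \ref{lem2} then forces $\bar y$ into the orbit, say $\bar y=\sigma(a,b)$ for an automorphism $\sigma$. Because words commute with homomorphisms, $\bar w(a,b)=\sigma^{-1}(a,b)$ lies in the orbit, contradicting $\bar w\in\Imp$; hence $(a,b)$ (and symmetrically $(b,a)$) satisfies $\theta$. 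Conversely, if $(x_1,x_2)$ satisfies $\theta$ but fails to generate, write $x_i=w_i(a,b)$; then $\bar w=(w_1,w_2)$ has $\bar w(a,b)=(x_1,x_2)$ outside the orbit, so $\bar w\in\Imp$, and taking $\bar y=(a,b)$ violates the exclusion clause. Thus every $\theta$-pair generates, and being an involution pair it lies in the orbit by Lemma \ref{lem2}. I would stress that restricting the auxiliary tuple $\bar y$ to involution pairs is exactly what makes Lemma \ref{lem2} applicable; without it the orbit elements themselves would fail the clause, since $(a,b)$ can be recovered from the non-orbit generating pair $(a,ab)$ by an imprimitive word.

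It then remains to see that $\Phi$ is a Scott sentence. Soundness is immediate: $D_\infty$ satisfies the group axioms, $(a,b)$ witnesses (ii), and since the $\theta$-pairs are exactly the generating pairs in the orbit, (iii) holds. For completeness, let $M\models\Phi$ and pick $(x_1,x_2)$ with $M\models\theta(x_1,x_2)$; then $x_1^2=x_2^2=1$ and, by (iii), $M=\la x_1,x_2\ra$, so $a\mapsto x_1,\ b\mapsto x_2$ gives a surjection $\psi\colon D_\infty\to M$. I claim $\psi$ is injective. If not, its kernel is a nontrivial normal subgroup, hence contains some power $t^n$ of the translation $t=ab$ with $n\ge 1$, so $(x_1x_2)^n=1$ in $M$. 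But then the pair of words $\bar w=\big((st)^n s,\ t\big)$ has $\bar w(x_1,x_2)=(x_1,x_2)$ in $M$, while $\bar w(a,b)=(t^n a, b)\notin\{(a,b),(b,a)\}$, so $\bar w\in\Imp$; taking $\bar y=(x_1,x_2)$ this contradicts the exclusion clause of $\theta(x_1,x_2)$. Hence $\psi$ is an isomorphism and $M\cong D_\infty$.

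The routine points are the complexity bookkeeping and the standard facts about $D_\infty$ (every nontrivial normal subgroup contains a translation, so every proper quotient is finite). The main obstacle is the design of $\theta$: one must notice that the direct free-group-style exclusion ``$\bar w(\bar y)\neq\bar x$ for \emph{all} $\bar y$ and all imprimitive $\bar w$'' is too strong for $D_\infty$, and that the correct remedy is to quantify only over involution pairs $\bar y$, which is precisely the hypothesis of Lemma \ref{lem2}. The secondary difficulty is the completeness direction, where one must produce, in each proper (finite dihedral) quotient, an explicit imprimitive word pair fixing $\bar x$ to defeat $\theta$; the relation $(x_1x_2)^n=1$ supplies exactly such a pair.
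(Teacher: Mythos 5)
Measured against the paper's own proof, your construction is a genuinely different route built from the same two ingredients: the decidability results of Proposition \ref{Benois} and Lemma \ref{lem2}. The paper's $\Sigma_2$ clause demands a pair realizing the \emph{full} atomic type of $(a,b)$ (every relation and non-relation), its $\Pi_2$ clause says that every triple is generated by an involution pair, and completeness is proved by taking an arbitrary element $c$, finding an involution pair $\bar y$ generating $(a,b,c)$, writing $\bar x=\bar w(\bar y)$, and using the exclusion clause to force $\bar w$ to be primitive. You instead isolate a computable $\Pi_1$ formula $\theta$ intended to define the orbit of the generating pair, require of $\bar x$ only the involution relations, take as the $\Pi_2$ part ``every $\theta$-pair generates'', and prove completeness by mapping $D_\infty$ onto the model and killing the kernel with an explicit imprimitive pair $((st)^n s, t)$, using the fact that every nontrivial normal subgroup of $D_\infty$ contains a power of $t=ab$. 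Your complexity bookkeeping is correct, and your route has a conceptual advantage: it exhibits the orbit of the generating pair as the solution set of a computable $\Pi_1$ formula, which is exactly the criterion of Alvir, Knight, and McCoy \cite{AKM} mentioned in the abstract.

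There is, however, a genuine problem, inherited from the paper but made worse by your formulation: Lemma \ref{lem2} is false as stated. The pair $(a,aba)$ is a generating pair of involutions distinct from $(a,b)$ and $(b,a)$---it is the conjugate of $(a,b)$ by $a$---and in fact the generating involution pairs form the infinite orbit $\{((ab)^k a,(ab)^{k\pm1}a) : k\in\Z\}$. The paper's sentence refers to imprimitivity via the \emph{orbit}, so it survives once the lemma is restated as ``the generating involution pairs are exactly the automorphic images of $(a,b)$,'' and its proofs then need only routine adjustment. Your sentence does not survive as written, because you \emph{defined} $\Imp$ to be $\{\bar w : \bar w(a,b)\notin\{(a,b),(b,a)\}\}$. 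With that definition the pair $\bar w=(s,sts)$ lies in $\Imp$ (its value at $(a,b)$ is $(a,aba)$), yet $\bar w(a,aba)=(a,b)$ and $(a,aba)$ is an involution pair, so $(a,b)$ itself fails $\theta$; combining this with your own argument that non-generating pairs fail $\theta$, no pair satisfies $\theta$, hence $D_\infty\not\models\Phi$ and $\Phi$ is not a Scott sentence. The repair is to define $\Imp$ as the set of pairs of words whose value at $(a,b)$ is \emph{not a generating involution pair}, i.e.\ not of the form $((ab)^k a,(ab)^{k\pm1}a)$; this set is still computable by the algorithm of Proposition \ref{Benois}. With that definition, and with Lemma \ref{lem2} replaced by its corrected form, every step of your argument goes through: in your orbit-inclusion step $\bar y$ is now an arbitrary automorphic image of $(a,b)$, and in your completeness step $((ab)^n a, b)$ is still not a generating involution pair for $n\ge 1$, so it remains in $\Imp$.
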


\begin{proof}

Let $\sigma$ be the conjunction of a computable $\Pi_1$ sentence $\sigma_1$ axiomatizing groups, a computable 
$\Pi_2$ sentence $\sigma_2$ saying that every triple is generated by a pair $(x_1,x_2)$ such that 
$x_1^2 = x_2^2 = 1$, and a computable $\Sigma_2$ sentence $\sigma_3$ saying that there is a pair $(x_1,x_2)$ satisfying exactly the relations satisfied by $(a,b)$ and such that for all $(y_1,y_2)$ satisfying $y_1^2 = y_2^2 = 1$, and for all imprimitive pairs of words $\bar{w}$, $\bar{w}(y_1,y_2)\not= (x_1,x_2)$.  We show that $\sigma$ is a Scott sentence.         

First, we show that $D_\infty$ satisfies $\sigma$.  Clearly, $D_\infty$ satisfies $\sigma_1$ and $\sigma_2$.  For $\sigma_3$, we let $(x_1,x_2)$ be $(a,b)$.  For a pair $(y_1,y_2)$, if there is a pair of words $\bar{w}$ such that $\bar{w}$ takes $(y_1,y_2)$ to $(a,b)$, then $(y_1,y_2)$ is a generating pair.  If $y_1^2 = y_2^2 = 1$, then by Lemma \ref{lem2}, $(y_1,y_2)$ must be either $(a,b)$ or $(b,a)$, so the pair of words $\bar{w}(y_1,y_2)$ cannot be imprimitive.  Therefore, $D_\infty$ satisfies $\sigma$.      

Next, we show that any countable group $G$ that satisfies $\sigma$ is isomorphic to $D_\infty$.  Take $(a,b)$ witnessing that $G$ satisfies $\sigma_3$.  Then the subgroup $H$ of $G$ generated by $(a,b)$ is isomorphic to $D_\infty$.  Now take any $c$ in $G$.  Since $G$ satisfies $\sigma_2$, the triple $(a,b,c)$ is generated by a pair $(a',b')$ such that $(a')^2 = (b')^2 = 1$.  Say that $\bar{w}$ is a pair of words such that $\bar{w}(a',b') = (a,b)$.  Since $G$ satisfies $\sigma_3$, it follows that $\bar{w}$ is primitive.  Then $\bar{w}(a,b)$ is equal to $(a,b)$ or $(b,a)$, and the equality is proved just from the group axioms and the relations $a^2 = b^2 = 1$.  This means that $\bar{w}(a',b')$ is either $(a',b')$ or $(b',a')$, so $(a',b') = (a,b)$ or $(b',a') = (a,b)$.  From this, it is clear that $H = G$.  This completes the proof that $\sigma$ is a Scott sentence.        
\end{proof}

We see that $D_\infty$ has a computable $d$-$\Sigma_2$ Scott sentence.  We can show that this is optimal.  

\begin{lem}
\label{dihhard}

$I(D_\infty)$ is $d$-$\Sigma^0_2$-hard.

\end{lem}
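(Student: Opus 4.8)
The plan is to avoid building copies of $D_\infty$ directly and instead to transport the hardness of $I(\mathbb{Z})$ through the assignment $A \mapsto \mathrm{Dih}(A) := A \rtimes \mathbb{Z}/2$, where $\mathbb{Z}/2$ acts on the abelian group $A$ by inversion. Recall that $I(\mathbb{Z})$ is already known to be $m$-complete $d$-$\Sigma^0_2$ (this is the case $G \cong \mathbb{Z}$ of Theorem \ref{abcomp}, established in \cite{idxsets}), so it suffices to exhibit an $m$-reduction $I(\mathbb{Z}) \leq_m I(D_\infty)$. Concretely, given an arbitrary $d$-$\Sigma^0_2$ set $S$, the construction behind the $\mathbb{Z}$ case supplies a uniformly computable sequence of computable \emph{abelian} groups $(\mathcal{A}_n)_{n \in \omega}$ with $\mathcal{A}_n \cong \mathbb{Z}$ iff $n \in S$. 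I would then pass to $(\mathrm{Dih}(\mathcal{A}_n))_{n \in \omega}$ and check that $\mathrm{Dih}(\mathcal{A}_n) \cong D_\infty$ iff $n \in S$.

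The heart of the argument is a purely group-theoretic observation: for every countable abelian group $A$,
\[ \mathrm{Dih}(A) \cong D_\infty \iff A \cong \mathbb{Z}. \]
The direction $(\Leftarrow)$ is immediate, since $\mathrm{Dih}(\mathbb{Z})$ is exactly the standard presentation of $D_\infty$. For $(\Rightarrow)$, note that $A$ sits inside $\mathrm{Dih}(A)$ as an abelian normal subgroup of index $2$, so an isomorphism $\mathrm{Dih}(A) \cong D_\infty$ carries $A$ onto an abelian index-$2$ subgroup of $D_\infty$. Since $D_\infty^{\mathrm{ab}} \cong (\mathbb{Z}/2)^2$, the group $D_\infty$ has exactly three index-$2$ subgroups: the cyclic rotation subgroup $\langle ab \rangle \cong \mathbb{Z}$, and two further subgroups, each generated by $(ab)^2$ together with a reflection and hence isomorphic to $D_\infty$ (in particular non-abelian). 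As the image of $A$ is abelian, it must be the rotation subgroup, so $A \cong \mathbb{Z}$.

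Granting this equivalence, the reduction is routine: from a computable presentation of an abelian group $A$ one uniformly computes a presentation of $\mathrm{Dih}(A)$ on the universe $A \times \{0,1\}$ with operation $(x,i)(y,j) = (x + (-1)^i y,\, i+j \bmod 2)$, so $(\mathrm{Dih}(\mathcal{A}_n))_n$ is uniformly computable; combined with the displayed equivalence this yields $n \in S \iff \mathrm{Dih}(\mathcal{A}_n) \cong D_\infty$, i.e. $S \leq_m I(D_\infty)$. The main point to verify carefully is the displayed equivalence (and the fact that the groups produced by the $\mathbb{Z}$ construction are genuinely abelian, so that $\mathrm{Dih}$ is defined on them). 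I would stress that a naive imitation of the proof of Lemma \ref{abhard} does \emph{not} work here: that argument places $\mathbb{Z}^k$ strictly between a smaller group $\mathbb{Z}^{k-1}$ and a larger group $\mathbb{Z}^{k+1}$ and relies on the Fact that existential formulas pull back to a generating subtuple, whereas $D_\infty$ has no infinite proper subgroup for which the analogous Fact could hold—its only infinite proper subgroup type is $\mathbb{Z}$, and the existential statement asserting an involution inverting the rotation is true in $D_\infty$ but false in $\mathbb{Z}$, so one cannot collapse into a group below $D_\infty$. Routing through $\mathrm{Dih}$ and the already-established hardness of $I(\mathbb{Z})$ is precisely what lets us sidestep this obstruction.
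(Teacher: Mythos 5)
Your reduction is correct, but it is a genuinely different argument from the paper's. The paper proves Lemma \ref{dihhard} by a direct movable-marker construction: given $S = S_1 - S_2$ with $S_1, S_2 \in \Sigma^0_2$, it enumerates, uniformly in $n$, the diagram of a copy of $D_\infty$ if $n \in S_1 - S_2$; the diagram of an infinitely generated group $H$ if $n \notin S_1$, where $H$ is the union of a strictly increasing chain of copies of $D_\infty$, each change of mind about $S_1$ re-expressing the current generator $a$ as $a'ba'$ for a new generator $a'$ (so $H \cong \mathbb{Z}[1/2] \rtimes \mathbb{Z}/2$); and only a finite fragment of a diagram (hence no computable structure at all) if $n \in S_1 \cap S_2$. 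You instead transport the known $m$-completeness of $I(\mathbb{Z})$ through the functor $A \mapsto \mathrm{Dih}(A) = A \rtimes \mathbb{Z}/2$, and your rigidity lemma --- that for countable abelian $A$, $\mathrm{Dih}(A) \cong D_\infty$ iff $A \cong \mathbb{Z}$, proved via the three index-$2$ subgroups of $D_\infty$, only $\langle ab \rangle$ being abelian --- is stated and proved correctly, as is the uniform computability of $\mathrm{Dih}(\mathcal{A}_n)$ on universe $\mathcal{A}_n \times \{0,1\}$. What your route buys is modularity: no stage-by-stage construction is needed here at all, and the combinatorial work is outsourced to \cite{idxsets}. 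What it costs is a dependence on more than the bare statement of hardness of $I(\mathbb{Z})$: an arbitrary $m$-reduction $S \leq_m I(\mathbb{Z})$ hands you indices about which nothing can be assumed when $n \notin S$ (the functions may be partial, or not code groups), and $\mathrm{Dih}$ of such an object is undefined. You flag this correctly, and the construction in \cite{idxsets} does produce a uniformly computable sequence of total abelian groups with $\mathcal{A}_n \cong \mathbb{Z}$ iff $n \in S$; but this property of the cited \emph{proof}, rather than of the cited \emph{theorem}, is exactly the load-bearing hypothesis and should be stated as such.

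One side remark in your last paragraph is false and should be deleted or repaired: you assert that the ``only infinite proper subgroup type'' of $D_\infty$ is $\mathbb{Z}$. In fact $\langle a, bab \rangle$ and $\langle b, aba \rangle$ are proper index-$2$ subgroups isomorphic to $D_\infty$ (more generally $\langle (ab)^n, a \rangle \cong D_\infty$ for every $n \geq 2$); indeed your own classification of the index-$2$ subgroups of $D_\infty$, two lines earlier, exhibits two non-abelian ones isomorphic to $D_\infty$, so the remark contradicts the correct part of your proof. Ironically, the feature you deny is precisely what the paper's construction exploits: its auxiliary group $H$ is an ascending union of proper copies of $D_\infty$, so the ``upward'' collapse $a \mapsto a'ba'$ is always available, and the third target is handled not by collapsing below $D_\infty$ (you are right that this is impossible) but by leaving the diagram unfinished. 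This error is not load-bearing for your reduction, which stands, but it misdescribes the landscape of alternative proofs.
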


\begin{proof}

We show that for any $d$-$\Sigma^0_2$ set $S$, $S\leq_m I(D_\infty)$.  Let $S = S_1 - S_2$, where $S_1$ and $S_2$ are $\Sigma^0_2$ sets.  Let $H$ be an infinitely generated group, with elements $\{a_i\}_{i \in \omega}$ and $b$ such that $a_{i+1} = a_iba_i$.  Uniformly in $n$, we will enumerate the atomic diagram of a copy of $H$, if $n\notin S_1$ and a copy of $D_\infty$ if $n\in S_1 - S_2$.  If $n\in S_1\cap S_2$, then we will enumerate only finitely much of the diagram of a copy of $D_\infty$.   
Let $S_{1, s}$ and $S_{2, s}$ be computable approximations for $S_1$ and $S_2$.

Without loss of generality, we may assume $n \in S_{1, 0} - S_{2, 0}$, so at stage $0$, our target set is $D_\infty$. We start with generators $a$ and $b$. At stage $s+1$, if there is no change to the target structure, then we continue to build the structure according to the diagram of the current target. However, if we change our belief of whether $n \in S_i$, we must change our target set accordingly.

First suppose we change our structure because of $S_1$.  Say $n \notin S_{1, s}$ and $n \in S_{1, s+1}$. Then we continue building our isomorphism, without having to collapse any elements.  On the other hand, if we see $n \in S_{1, s}$ but $n \notin S_{1, s+1}$, then we express $a$ as $a'ba'$, and we think of $a'$ and $b$ as our new generating pair. Notice that if infinitely often we come back to believing that $n \notin S_1$, we will produce a group that is not finitely generated, and thus not isomorphic to $D_\infty$. In this case we construct the diagram of a copy of $H$. If $n \in S_1 - S_2$, then we will eventually stop returning to $H$ and we will have a group isomorphic to $D_\infty$.

Now suppose our structure changes because of $S_2$. This time, we modify our isomorphism by either stopping or continuing our construction of a copy of $D_\infty$. Say $n \notin S_{2, s}$ but $n \in S_{2,s+1}$. Then we stop adding to the diagram. If we later see $n \in S_{2, s}$ but $n \notin S_{2, s+1}$, then we continue enumerating the diagram of a copy of $D_\infty$ where we left off. If at infinitely many stages, we believe that $n \in S_1 - S_2$, then we will end up with the atomic diagram of a copy of $D_\infty$. If we never return to believing $n \in S_1 - S_2$, then $n \in S_1 \cap S_2$, and we will be left with a finite fragment of the diagram.  This shows that $S\leq_m I(D_\infty)$, so $I(D_\infty)$ is $d$-$\Sigma^0_2$-hard.
\end{proof}

Our results on $D_\infty$ are summarized in the following theorem.

\begin{thm}
\label{dihcomp}

$D_\infty$ has a computable $d$-$\Sigma_2$ Scott sentence, and $I(D_\infty)$ is $m$-complete $d$-$\Sigma^0_2$.

\end{thm}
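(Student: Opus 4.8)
The plan is to assemble the theorem from the two pieces already established in this subsection, since the statement is exactly the conjunction of a complexity upper bound and a matching lower bound for $I(D_\infty)$. Nothing new needs to be constructed; the work is to verify that the syntactic complexity of the Scott sentence and the arithmetical complexity of the index set line up.

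First I would establish the upper bound. The preceding proposition exhibits a computable $d$-$\Sigma_2$ Scott sentence $\sigma$ for $D_\infty$, namely the conjunction of the $\Pi_1$ group axioms $\sigma_1$, the $\Pi_2$ sentence $\sigma_2$, and the $\Sigma_2$ sentence $\sigma_3$. By the general principle recorded in the introduction---satisfaction of a computable $\Sigma_2$ sentence is $\Sigma^0_2$ and satisfaction of a computable $\Pi_2$ sentence is $\Pi^0_2$, both with full uniformity in the index \cite{AK}---the set of indices of computable copies of $D_\infty$ is the intersection with $I(Mod(L))$ of a $\Sigma^0_2$ condition (from $\sigma_3$) and a $\Pi^0_2$ condition (from $\sigma_1 \wedge \sigma_2$). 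Hence $I(D_\infty)$ is $d$-$\Sigma^0_2$. Next I would invoke the lower bound already supplied by Lemma \ref{dihhard}: every $d$-$\Sigma^0_2$ set $S$ reduces to $I(D_\infty)$, via the uniformly computable family $(\G_n)$ built so that $\G_n \cong D_\infty$ exactly when $n \in S_1 - S_2 = S$ (and $\G_n$ is a copy of the infinitely generated group $H$, or merely a finite fragment of $D_\infty$, otherwise). This gives $d$-$\Sigma^0_2$-hardness.

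Combining the two bounds, $I(D_\infty)$ is both $d$-$\Sigma^0_2$ and $d$-$\Sigma^0_2$-hard, hence $m$-complete $d$-$\Sigma^0_2$, and the Scott sentence $\sigma$ is therefore optimal: no Scott sentence of strictly lower infinitary complexity can exist, in keeping with the thesis of \cite{idxsets}. I do not expect a genuine obstacle here, since the two halves are independent and were proved separately above; the only point requiring care is confirming that the uniformity statement for satisfaction of computable infinitary formulas really does transfer the computable $d$-$\Sigma_2$ form of $\sigma$ into a $d$-$\Sigma^0_2$ bound on $I(D_\infty)$. That transfer, together with Lemma \ref{dihhard}, completes the proof.
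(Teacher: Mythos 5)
Your proposal is correct and matches the paper exactly: Theorem \ref{dihcomp} is stated there as a summary, proved by combining the computable $d$-$\Sigma_2$ Scott sentence (which, via the uniformity of satisfaction for computable infinitary sentences, puts $I(D_\infty)$ in $d$-$\Sigma^0_2$) with the hardness of Lemma \ref{dihhard}. No further argument is needed, and your one point of care---that the $\Sigma_2$/$\Pi_2$ split of $\sigma$ transfers to a $\Sigma^0_2 \cap \Pi^0_2$ description of the index set within $I(Mod(L))$---is handled precisely as you describe.
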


\section{Torsion-free abelian groups}

The torsion-free abelian groups are precisely the subgroups of vector spaces over $\mathbb{Q}$. The \emph{rank} of such a group is the least dimension of a vector space in which the group can be embedded.  In \cite{idxsets}, there are Scott sentences and index sets for the $\mathbb{Q}$-vector spaces themselves.  Most torsion-free abelian groups of finite rank are not finitely generated.  However, we have the analogue of Proposition \ref{Sigma_3}.           

\begin{prop}
\label{sigma3desc}

Let $G$ be a computable torsion-free abelian group of finite rank. Then $G$ has a computable $\Sigma_3$ Scott sentence.  

\end{prop}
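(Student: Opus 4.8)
The plan is to mimic the $\Sigma_3$ description of the finitely generated case (Proposition \ref{Sigma_3}), but to replace the single generating tuple by a maximal linearly independent tuple together with an explicit, \emph{computably enumerable} record of all the divisibility facts that hold in $G$. Let $n$ be the rank of $G$, fix a computable maximal linearly independent tuple $\ol{a} = (a_1,\dots,a_n)$ in $G$, and regard $G$ as a subgroup of $\Q^n$ with $a_i$ mapping to the $i$-th standard basis vector. Every element of $G$ then has the form $\tfrac1m(k_1a_1+\cdots+k_na_n)$ for some $\ol{k}\in\Z^n$ and $m>0$. I would define $D$ to be the set of pairs $(\ol{k},m)$ such that $(\exists x)(mx = k_1a_1+\cdots+k_na_n)$ holds in $G$; since $G$ is computable, membership in $D$ amounts to a search for a witness $x$, so $D$ is c.e. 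The set $D$ records exactly which rational combinations of $\ol{a}$ lie in $G$, and hence determines $G$ as a subgroup of $\Q^n$.

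The Scott sentence I would take is the conjunction of the following. First, the axioms for torsion-free abelian groups, which are finitary (or computable) $\Pi_1$. Second, the sentence $(\exists \ol{a})$ of the conjunction of three parts: (i) linear independence of $\ol{a}$, namely $\bigdoublewedge_{\ol{k}\neq 0}(k_1a_1+\cdots+k_na_n\neq 0)$, which is computable $\Pi_1$; (ii) the positive divisibility facts $\bigdoublewedge_{(\ol{k},m)\in D}(\exists x)(mx = k_1a_1+\cdots+k_na_n)$, a c.e.\ conjunction of $\Sigma_1$ formulas, hence computable $\Pi_2$; and (iii) the closure condition $(\forall y)\bigdoublevee_{(\ol{k},m)\in D}(my = k_1a_1+\cdots+k_na_n)$, which is computable $\Pi_2$. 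The matrix inside the existential quantifier is thus computable $\Pi_2$, so prefixing $(\exists\ol{a})$ yields a computable $\Sigma_3$ sentence, and conjoining the $\Pi_1$ axioms keeps it $\Sigma_3$.

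To verify this is a Scott sentence, I would argue as follows. Clearly $G$ satisfies it, witnessed by $\ol{a}$. Conversely, if $B$ is any countable model with witnessing tuple $\ol{a}^B$, then part (iii) says every $y\in B$ satisfies $my = k_1a_1^B+\cdots+k_na_n^B$ for some $(\ol{k},m)\in D$, and torsion-freeness makes this $y$ unique; this lets me define $y\mapsto \tfrac1m\ol{k}\in\Q^n$. Linear independence of $\ol{a}^B$ shows the map is well defined and injective, it is visibly a homomorphism, part (ii) shows its image contains every $\tfrac1m\ol{k}$ with $(\ol{k},m)\in D$, and part (iii) shows the image is contained in that same set. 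Hence the image is exactly $G\subseteq\Q^n$, so $B\cong G$.

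The step I expect to be the real obstacle is staying inside computable infinitary logic. The naive way to pin down $G$ is to also assert the \emph{negative} divisibility facts $(\forall x)(mx\neq k_1a_1+\cdots+k_na_n)$ for every $(\ol{k},m)\notin D$; but the complement of $D$ is only $\Pi^0_1$, so such a conjunction need not be a computable infinitary formula. The resolution, and the point I would emphasize, is that the closure condition (iii) expresses ``$G$ has no further elements'' \emph{positively}, as a c.e.\ disjunction indexed by $D$ itself, and this is a legitimate computable $\Pi_2$ formula. This positive reformulation is what keeps the whole description computable $\Sigma_3$.
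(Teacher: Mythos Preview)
Your proof is correct and follows essentially the same approach as the paper: fix a maximal linearly independent $n$-tuple, let the c.e.\ set $D$ (the paper calls it $\Lambda$) record which rational combinations of the tuple occur in $G$, and take the Scott sentence to be $(\exists\ol{x})$ applied to the conjunction of ``every combination in $D$ is realized'' and ``every element realizes some combination in $D$''. Your version is more explicit---you add the linear-independence clause and actually verify that the sentence characterizes $G$, both of which the paper's terse proof leaves to the reader---but the underlying idea is identical.
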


\begin{proof}

Suppose $G$ has rank $n$.  We have $n$ elements $x_1, \ldots, x_n$ such that every element of $G$ is uniquely represented as a linear combination of elements of $x_1, \ldots, x_n$.  Let $\Lambda$ be the set of linear combinations that actually occur.  We have a Scott sentence that is the conjunction of the axioms for torsion-free abelian groups, which are finitary $\Pi_1$ and the computable $\Sigma_3$ sentence
\[(\exists x_1) \cdots (\exists x_n) [\bigdoublewedge_{\lambda\in\Lambda}(\exists y) y = \lambda(x_1, \ldots, x_n)\ \wedge \ (\forall y)\bigdoublevee_{\lambda\in\Lambda}y = \lambda(x_1, \ldots, x_n)]\ .\]  
\end{proof}

We focus on torsion-free abelian groups of rank $1$.  These are the additive subgroups of $\Q$.  For more about these groups, see \cite{BZ}.  In some cases, we can show that the index set is $m$-complete 
$\Sigma^0_3$, so the computable $\Sigma_3$ description in Proposition \ref{sigma3desc} is optimal.  In other cases, there is a computable $d$-$\Sigma_2$ Scott sentence, and we show that this is the best possible description.  

We can describe a subgroup $G$ of $\mathbb{Q}$ by fixing an element to play the role of $1$, and saying which prime powers divide this element.  
Let $G$ be a computable subgroup of $\mathbb{Q}$, and let $P$ denote the set of primes.  We distinguish among cases by partitioning $P$ into sets $P^0$, $P^{fin}$, and $P^\infty$, where:

\begin{enumerate}

\item $P^0 = \{p \in P : G \models p \nmid 1\}$

\item $P^{fin} = \{p \in P : G \models p^k \mid 1\text{\emph{ and }}p^{k+1} \nmid 1\text{\emph{ for some }} k > 0 \}$

\item $P^\infty = \{p \in P : G \models p^k \mid 1\text{\emph{ for all }} k > 0\}$
\end{enumerate}

Notice that $P^0$ is $\Pi^0_1$ and $P^{fin} \cup P^\infty$ is $\Sigma^0_1$.  Also, $P^\infty$ is $\Pi^0_2$ and $P^{fin}$ is $\Sigma^0_2$. 

\subsection{Hardness}  

In this subsection, we give some hardness results on the index sets of subgroups $G$ of $\Q$.

\begin{lem}
\label{dsigma2hard}

Suppose $G$ is a computable subgroup of $\mathbb{Q}$ such that $P^0 \cup P^{fin} \ne \emptyset$ and $P^\infty \ne \emptyset$. Then $I(G)$ is 
$d$-$\Sigma^0_2$-hard.   

\end{lem}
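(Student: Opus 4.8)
The plan is to reduce an arbitrary $d$-$\Sigma^0_2$ set $S = S_1 - S_2$ (with $S_1,S_2$ both $\Sigma^0_2$) to $I(G)$ by producing a uniformly computable sequence of subgroups $\G_n\le\Q$ with $\G_n\cong G$ iff $n\in S$. The key structural fact I would use is the classical criterion that two subgroups of $\Q$ are isomorphic exactly when their height sequences (characteristics) are equivalent, i.e. they agree at all but finitely many primes and, wherever they differ, both entries are finite (see \cite{BZ}). So to flip the isomorphism type it suffices to change the height at a single prime between a finite value and $\infty$. I would fix a prime $q\in P^\infty$, where $G$ has height $\infty$, and a prime $p\in P^0\cup P^{fin}$, where $G$ has some finite height $h_p$; this is exactly where the two hypotheses $P^\infty\ne\emptyset$ and $P^0\cup P^{fin}\ne\emptyset$ are used, and it forces $p\ne q$. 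Each $\G_n$ will agree with $G$ at every prime other than $p$ and $q$, so that $\G_n\cong G$ iff the $q$-height of $\G_n$ is $\infty$ and its $p$-height is finite.

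Both heights would be controlled \emph{monotonically}, which avoids the collapsing and re-mapping needed in the abelian and dihedral cases. I take the standard $\Sigma^0_2$ approximations $S_{1,s},S_{2,s}$, so that $n\in S_i$ iff $S_{i,s}=1$ for all large $s$, equivalently $n\notin S_i$ iff $S_{i,s}=0$ infinitely often. Let $D_G=\{m : G\models m\mid 1\}$, a divisor-closed c.e.\ set that I can enumerate uniformly. I build $\G_n$ from a divisor-closed c.e.\ set $D_n$: at stage $s$, with counters $P_s=|\{t<s : S_{1,t}=0\}|$ and $Q_s=|\{t<s : S_{2,t}=0\}|$, I put $m$ into $D_n$ once $m=p^aq^bm'$ with $\gcd(m',pq)=1$, $m'\in D_G$ enumerated, $a\le P_s$, and $b\le Q_s$. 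Then $\G_n$ is the subgroup of $\Q$ with characteristic given by $D_n$: it agrees with $G$ off $\{p,q\}$, its $p$-height is $\lim_s P_s$, and its $q$-height is $\lim_s Q_s$. Since $\lim_s P_s$ is finite iff $S_{1,s}=0$ only finitely often iff $n\in S_1$, and $\lim_s Q_s$ is infinite iff $S_{2,s}=0$ infinitely often iff $n\notin S_2$, I would conclude $\G_n\cong G$ iff ($p$-height finite) and ($q$-height $\infty$) iff $n\in S_1$ and $n\notin S_2$, i.e.\ iff $n\in S$.

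The hard part is checking that $\G_n$ is genuinely computable, uniformly in $n$, from the c.e.\ data $D_n$. For this I would represent nonzero elements as reduced fractions $a/m$; since a reduced fraction lies in the group exactly when its denominator is in $D_n$, and the sum of two group elements reduces to such a fraction, addition and equality become decidable by ordinary integer arithmetic together with a search of the uniformly c.e.\ set $D_n$, yielding a uniformly computable copy. The remaining points are routine: $D_n$ is divisor-closed because $D_G$ is and because the $p$- and $q$-thresholds are downward closed, and Baer's equivalence criterion, applied to the three possible outcomes ($p$-height $\infty$, or $q$-height finite, or both heights matching $G$), confirms $\G_n\cong G$ precisely when $n\in S$. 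This gives $S\le_m I(G)$, and hence $I(G)$ is $d$-$\Sigma^0_2$-hard.
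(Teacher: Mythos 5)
Your proposal is correct, but it reaches the conclusion by a genuinely different route from the paper. The paper uses the same three-outcome skeleton (a group that is ``too divisible'' at $p$ when $n\notin S_1$, a copy of $G$ when $n\in S_1-S_2$, a group not divisible enough at $q$ when $n\in S_1\cap S_2$), but realizes it by a hands-on, stage-by-stage construction: at each stage there is a finite partial isomorphism $f_{n,s}$ from $\G_n$ onto the current target structure $H$, $G$, or $K$; whenever the $\Sigma^0_2$ approximations change their mind, the target changes, the designated $1$ is re-chosen (e.g.\ replaced by $p^k/p^{k_s}$), and $f_{n,s+1}$ is arranged to extend $f_{n,t}$ for the last stage $t$ at which the target was the same; the final isomorphism is the union of a suitable subsequence of the $f_{n,s}$. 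You avoid all of that bookkeeping by making the construction monotone: the heights of $\G_n$ at $p$ and $q$ only ever increase, driven by counters of the stages at which the approximations assert $n\notin S_1$ and $n\notin S_2$, so that ``$p$-height finite'' is exactly $n\in S_1$ and ``$q$-height infinite'' is exactly $n\notin S_2$. The price is two outside ingredients the paper never invokes: the Beaumont--Zuckerman/Baer classification of subgroups of $\Q$ by equivalence of characteristics, which converts your height computation into an isomorphism statement, and the standard fact that a c.e.\ set of admissible denominators yields, uniformly, a computable copy of the corresponding subgroup of $\Q$. On the latter point, two details deserve to be made explicit: you need $D_n$ to be closed under least common multiples, not only under divisors, for ``reduced fractions with denominator in $D_n$'' to be a group (this does hold, since $D_G$ is lcm-closed and your exponent thresholds are caps); and since $D_n$ is only c.e., your computable copy has as universe a computable set of indices for an injective enumeration of the c.e.\ set of reduced fractions, with operations computed by search---which is what your remark amounts to, and is fine. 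As for what each approach buys: yours is shorter, injury-free, and isolates exactly the $\Sigma^0_2$/$\Pi^0_2$ content of the problem (finiteness versus infinitude of a monotone limit); the paper's partial-isomorphism template is heavier but needs no classification theorem, which is why essentially the same argument serves it for $\Z^k$ in Lemma \ref{abhard} and for $D_\infty$ in Lemma \ref{dihhard}, where no analogue of Baer's criterion is available.
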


\begin{proof}

Fix primes $p \in P^0 \cup P^{fin}$ and $q \in P^\infty$, and let $k$ be greatest such that $G\models p^k|1$ . We must show that for an arbitrary $d$-$\Sigma^0_2$ set $S$, $S\leq_m I(G)$.  Let $S = S_1 - S_2$, where $S_1$ and $S_2$ are $\Sigma^0_2$.  We describe a uniformly computable sequence of groups 
$(\mathcal{G}_n)_{n\in\omega}$ such that $\mathcal{G}_n \cong G$ if and only if $n\in S$.  We use alternative structures $H$ and $K$, where $H\subseteq\mathbb{Q}$ is the smallest extension of $G$ in which $p$ divides $1$ infinitely, and $K$ is the largest subgroup of $G$ in which $q$ does not divide $1$.  We will arrange that 

\[ \mathcal{G}_n \cong \left\{ \begin{array}{ll}
H & \mbox{if $n \notin S_1$} \\ 
G & \mbox{if $n \in S_1 - S_2$} \\
K & \mbox{if $n \in S_1 \cap S_2$} \end{array} \right. \]  
Let $S_{1, s}$ and $S_{2, s}$ be computable approximations for $S_1$ and $S_2$, respectively, such that
$n \in S_i$ if and only if for all but finitely many $s$, $n \in S_{i, s}$.  

Let $C$ be an infinite computable set of constants. This will be the universe of $\mathcal{G}_n$ for all $n$.  At stage $s$, we determine a finite partial isomorphism $f_{n,s}$ from $\mathcal{G}_n$ to a target structure.  The choice of the target structure at stage $s$ is based on $S_{1,s}$ and $S_{2,s}$.  The target structure is $H$ if $n \notin S_{1,s}$, $G$ if $n \in S_{1,s} - S_{2,s}$, and $K$ if $n \in S_{1,s} \cap S_{2,s}$.  We obtain the desired isomorphism by taking the union of certain $f_{n,s}$.  We suppose that $n \notin S_{1, 0}$, so at stage $0$, our target structure is 
$H$. Fix a $1 \in H$. We enumerate the atomic diagram of $\mathcal{G}_n$ in stages.  At stage $s$, we have a finite part $d_{n,s}$, and we suppose that $f_{n,s}$ maps the constants that appear in $d_{n,s}$ to the target structure so as to make the $d_{n,s}$ true.  Given $f_{n,s}$, and a $d_{n,s}$, we describe what happens at stage $s+1$; in particular, what happens when we change target structures.  

We first say what occurs when we switch between $H$ and $G$. If $n \notin S_{1, s}$ but $n \in S_{1, s+1}$, then we switch from $H$ to $G$.  If $n \in S_{1, s}$ but $n \notin S_{1, s+1}$, then we switch from $G$ to $H$, so that $f_{n,s+1}$ extends $f_t$ for the greatest stage $t < s$ for which the target structure was $H$.  Supposing our target structure is $H$, we keep dividing our designated $1$ by $p$. If we switch to thinking that our structure is actually $G$, we stop dividing by $p$.  We need to change our $1 \in G$, since $1$ is now divisible by $p$. So, we replace $1$ by 
$p^k/p^{k_s}$, where $k_s$ is the highest power of $p$ dividing $1$ at stage $s$. If, at some later stage, we return to $H$ as our target structure, we go back to our original $1 \in H$ and continue dividing it by $p$. In this way, if we return to $H$ infinitely often, then $p$ will divide $1$ infinitely. If we eventually stop having $H$ as the target structure, then we are left with the special element $1$ that is divisible by $p$ only $k$ times.

Now, consider the cases where we switch between structures $G$ and $K$. If $n \notin S_{2, s}$ but $n \in S_{2, s+1}$, then we switch from $G$ to $K$.  While we believe our target structure is $G$, we keep dividing $1$ by $q$.  Suppose that at stage $s$, the target structure is $G$, but at stage $s+1$, it is $K$. Then we stop dividing $1$ by $q$, and replace $1 \in G$ by $1/q^{\ell_s}$, where $\ell_s$ is the highest power of $q$ dividing $1$ at stage $s$. If at a later stage, the target structure is again $G$, then we return to the original $1 \in G$, and we continue dividing $1$ by powers of $q$.  We let $f_{n,s+1}$ extend $f_{n,t}$ for the greatest $t < s$ such that the target structure was $G$.  Thus, if we return to $G$ infinitely often, then $q$ will divide $1$ infinitely, as it should. Otherwise we will eventually settle on $K$, in which case, $q$ does not divide $1$ even once.

Given the sequence $(f_{n,s})_{s \in \omega}$ constructed by this procedure, we obtain an isomorphism from $\mathcal{G}_n$ to the desired structure as follows.  If 
$n \notin S_1$, then $f_n$ is the union of all $f_{n,s}$ such that $n \notin S_{1, s}$.  This is an isomorphism from $\mathcal{G}_n$ to $H$.  If $n \in S_1 - S_2$, there is some $s_1$ such that for all $s\geq s_1$ $n\in S_{1,s}$, and for infinitely many $s\geq s_1$, $n\notin S_{2,s}$.  In this case, $f_n$ is the union of the $f_{n,s}$ for 
$s \ge s_1$ such that $n \notin S_{2, s}$.  This is an isomorphism from $\mathcal{G}_n$ to $G$.  If $n \in S_1 \cap S_2$, there is some $s_2\geq s_1$ such that for all $s\geq s_2$, $n \in S_{2,s}$.  In this case, $f_n$ is the union of $f_{n,s}$ for $s \ge s_2$.  This is an isomorphism from $\mathcal{G}_n$ to $K$.  This concludes the proof.
\end{proof}

Lemma \ref{dsigma2hard} gives conditions guaranteeing that $I(G)$ is $d$-$\Sigma^0_2$-hard.  The lemma below gives conditions guaranteeing that $I(G)$ is 
$\Sigma^0_3$-hard.  We will use the well-known fact that the set $Cof = \{e : W_e\text{ is co-finite}\}$ is $m$-complete $\Sigma^0_3$.  

\begin{lem}
\label{sigma3hard}

Suppose $G$ is a computable subgroup of $\mathbb{Q}$, with $P^{fin}$ having an infinite computable subset $A$.  Then $I(G)$ is $\Sigma^0_3$-hard.   

\end{lem}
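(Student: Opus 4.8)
The plan is to reduce the set $Cof = \{e : W_e\text{ is co-finite}\}$, which is $m$-complete $\Sigma^0_3$, to $I(G)$. The engine behind the reduction is the classical classification of rank-$1$ torsion-free abelian groups (see \cite{BZ}): fixing the element $1$, the group is determined by its \emph{characteristic}, the sequence $(h_p)$ of $p$-heights $h_p\in\{0,1,2,\ldots,\infty\}$ of $1$, and two such groups are isomorphic precisely when their characteristics differ at only finitely many primes and, at every prime where they differ, \emph{both} heights are finite. Enumerate the given infinite computable set $A\subseteq P^{fin}$ as $p_0<p_1<\cdots$, and for each $i$ let $k_i\geq 1$ be the (finite) $p_i$-height of $1$ in $G$. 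Uniformly in $e$ I will build a computable subgroup $\mathcal{G}_e$ of $\Q$ that agrees with $G$ at every prime outside $A$, and whose $p_i$-height equals $k_i$ exactly when $i\in W_e$ and equals $k_i-1$ (a finite value $\neq k_i$, using $k_i\geq1$) when $i\notin W_e$. Then $\mathcal{G}_e$ and $G$ have characteristics that agree except at the primes $p_i$ with $i\notin W_e$, where they differ by $1$ and both heights are finite; hence $\mathcal{G}_e\cong G$ iff the set $\{i:i\notin W_e\}$ is finite, i.e.\ iff $e\in Cof$.

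For the construction I would follow the template of Lemma \ref{dsigma2hard}: fix a computable universe $C$, maintain at each stage a finite partial isomorphism $f_{e,s}$ from $\mathcal{G}_e$ onto a target subgroup of $\Q$, and take an appropriate union at the end. At primes $p\notin A$ I simply copy $G$ (possible since $G$ is computable), so those heights—whether finite or $\infty$—match $G$ exactly and contribute no difference. At a prime $p_i\in A$ I run a \emph{one-step lag}: letting $m_{i,s}$ be the largest power with $G\models p_i^{m_{i,s}}\mid 1$ confirmed by stage $s$ (so $m_{i,s}\nearrow k_i$), I keep the $p_i$-height of $\mathcal{G}_e$ equal to $\max(0,m_{i,s}-1)$ while $i\notin W_{e,s}$, and the moment $i$ enters $W_e$ I drop the lag and thereafter copy $G$ faithfully. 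Because heights in this scheme only ever increase, no downward revision (``collapsing'') is ever needed, and the enumeration of $D(\mathcal{G}_e)$ is computable uniformly in $e$.

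The verification of the two limiting cases is then routine: if $i$ never enters $W_e$ the lagged height converges to $k_i-1$, while if $i$ enters at some stage $s_0$ the height is $m_{i,s_0}-1\leq k_i-1$ at that point and climbs faithfully to $k_i$, each step an increase. The main obstacle to anticipate is exactly the reason the lag is needed: the target value $k_i$ is only $\Delta^0_2$ (the height is approximable from below via the c.e.\ predicate $p_i^k\mid 1$, but we never learn that the maximum has been reached), whereas the construction can only raise heights over time. A naive ``match $G$, then add one extra division in the bad case'' strategy risks overshooting $k_i$ and getting stuck at $k_i+1$ in a case that is supposed to match; lagging one step below $k_i$ in the default case guarantees that the catch-up after entering $W_e$ is purely upward and terminates exactly at $k_i$, which is what makes the reduction correct and computable. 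Together with the classification recalled above this yields $Cof\leq_m I(G)$, so $I(G)$ is $\Sigma^0_3$-hard.
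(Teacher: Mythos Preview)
Your proposal is correct and follows essentially the same approach as the paper: both reduce $Cof$ to $I(G)$ by building $\mathcal{G}_e$ whose $p_i$-height agrees with $G$ when $i\in W_e$ and is one less when $i\notin W_e$, and both appeal to the Baer--Beaumont--Zuckerman classification for the verification. Your ``one-step lag'' makes explicit a point the paper glosses over, namely how to achieve the target height $k_i-1$ computably when $k_i$ is only approximable from below; this is a genuine detail the paper's construction needs but does not spell out.
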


\begin{proof} 

To show that $Cof\leq_m I(G)$, we produce a uniformly computable sequence of groups $(\mathcal{G}_n)_{n\in\omega}$ such that $\mathcal{G}_n\cong G$ if and only if $n\in Cof$.  Let $(p_i)_{i \in\omega}$ be the elements of $A$, in order.  As in the previous proof, the universe of all $\mathcal{G}_n$ is an infinite computable set $C$ of constants.  We designate an element of $C$ to play the role of $1$, and we construct $\mathcal{G}_n$ such that 
\begin{enumerate}
\item  for primes $p\notin A$, $\mathcal{G}_n\models p^r\mid1$ if and only if $G\models p^r\mid1$, 
\item  for $p_k\in A$, if $G\models p_k^{r+1} \mid 1$, then 
\begin{enumerate}
\item  $\mathcal{G}_n\models p_k^r \mid1$,
and 
\item  $\mathcal{G}_n\models p_k^{r+1} \mid 1$ if and only $k\in W_n$.  
\end{enumerate}
\end{enumerate}

If $n\in Cof$, then $W_n$ is co-finite. Let $m$ be the product of those $p_k$ for which $k\notin W_n$.  We replace our original $1$ with $m \cdot 1$.  Then our new $1$ is divisible by exactly the same prime powers as the designated $1$ in $G$, so $\mathcal{G}_n\cong G$. If $n\notin Cof$, then, since $W_n$ is co-infinite, there is no element of $\mathcal{G}_n$ divisible by the same prime powers as the designated $1$ in $G$, so $\mathcal{G}_n\not\cong G$.

The existence of a such a sequence $\G_n$ proves that $Cof\leq_m I(G)$. Furthermore, since $Cof$ is known to be $m$-complete $\Sigma^0_3$, we conclude that $I(G)$ is $\Sigma^0_3$-hard.
\end{proof}

\subsection{Completeness}

Suppose $G$ is a computable subgroup of $\mathbb{Q}$.  By Proposition \ref{sigma3desc}, $G$ has a computable $\Sigma_3$ Scott sentence.  Then $I(G)$ is 
$\Sigma^0_3$.  If $G$ satisfies the hypothesis of Lemma \ref{sigma3hard}, then $I(G)$ is $m$-complete $\Sigma^0_3$, so the computable $\Sigma_3$ Scott sentence is best possible.  If we find a computable $d$-$\Sigma_2$ Scott sentence for $G$, then $I(G)$ is $d$-$\Sigma^0_2$.  If $G$ satisfies the very mild conditions of Lemma \ref{dsigma2hard}, then $I(G)$ is $m$-complete $d$-$\Sigma^0_2$, so the computable $d$-$\Sigma_2$ Scott sentence is best possible.  Bearing this in mind, we state some theorems.

\begin{thm}

Let $G$ be a computable subgroup of $\mathbb{Q}$, with $P^0$ and $P^\infty$ both finite, and $P^{fin}$ infinite.  Then $G$ has a computable $\Sigma_3$ Scott sentence, and $I(G)$ is $m$-complete $\Sigma^0_3$.

\end{thm}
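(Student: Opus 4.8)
The plan is to derive both assertions—the existence of a computable $\Sigma_3$ Scott sentence and the $m$-completeness of $I(G)$ at level $\Sigma^0_3$—by combining results already established earlier in the excerpt, so that the only genuinely new content is a short computability observation. First I would note that since $P^{fin}$ is infinite, $G$ contains an element (the designated $1$) divisible by infinitely many distinct prime powers, so $G$ is a nontrivial subgroup of $\mathbb{Q}$ and hence a torsion-free abelian group of rank $1$. Proposition \ref{sigma3desc} then immediately furnishes a computable $\Sigma_3$ Scott sentence for $G$, and, as recorded in the introduction, a computable $\Sigma_3$ Scott sentence forces $I(G)$ to be $\Sigma^0_3$. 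This settles the upper bound.

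For hardness I would invoke Lemma \ref{sigma3hard}, whose hypothesis asks that $P^{fin}$ contain an infinite computable subset $A$. The key point—the only real step beyond citing prior lemmas—is that under the present hypotheses $P^{fin}$ is \emph{itself} computable. Indeed, the three sets $P^0$, $P^{fin}$, $P^\infty$ partition the computable set $P$ of all primes; since $P^0$ and $P^\infty$ are both finite, their union $P^0 \cup P^\infty$ is a finite, hence computable, set, and therefore $P^{fin} = P \setminus (P^0 \cup P^\infty)$ is computable. As $P^{fin}$ is infinite by hypothesis, we may take $A = P^{fin}$ in Lemma \ref{sigma3hard}, which yields a reduction $Cof \leq_m I(G)$ and hence that $I(G)$ is $\Sigma^0_3$-hard.

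Combining the $\Sigma^0_3$ upper bound with the $\Sigma^0_3$-hardness gives that $I(G)$ is $m$-complete $\Sigma^0_3$; in particular the computable $\Sigma_3$ Scott sentence of Proposition \ref{sigma3desc} is optimal, exactly as the governing thesis predicts. I do not expect any substantial obstacle here, since the difficult constructions—the general rank-$1$ $\Sigma_3$ description and the coding of $Cof$ into the divisibility pattern of $1$ along an infinite computable set of primes—have already been carried out in Proposition \ref{sigma3desc} and Lemma \ref{sigma3hard}. The single point deserving care is the passage from ``$P^0$ and $P^\infty$ finite'' to ``$P^{fin}$ computable'': in general $P^{fin}$ is only $\Sigma^0_2$, and it is precisely the finiteness of the other two pieces of the partition that upgrades it to a computable set eligible to serve as the coding locus $A$. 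Beyond flagging this upgrade, the proof is essentially an assembly of the earlier machinery.
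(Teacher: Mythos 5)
Your proposal is correct and follows essentially the same route as the paper's proof: invoke Proposition \ref{sigma3desc} for the $\Sigma^0_3$ upper bound, then observe that finiteness of $P^0$ and $P^\infty$ makes $P^{fin}$ co-finite (hence computable and infinite), so Lemma \ref{sigma3hard} applies with $A = P^{fin}$. The only difference is that you spell out the computability upgrade in more detail than the paper, which is a reasonable elaboration rather than a new idea.
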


\begin{proof}

We have a computable $\Sigma_3$ Scott sentence, and $I(G)$ is $\Sigma^0_3$.  Since $P^0$ and $P^\infty$ are both finite, $P^{fin}$ is co-finite, so we can apply Lemma \ref{sigma3hard}.  We get the fact that $I(G)$ is $m$-complete $\Sigma^0_3$.
\end{proof}

As a concrete example, consider $G \subseteq \mathbb{Q}$ such that $G \models (p_n^n \mid 1\ \wedge\ p_n^{n+1}\nmid 1)$, where $(p_n)_{n \in \omega}$ is the standard enumeration of the primes in order.  With this $G$, $P^{fin} = P$, and $P^0 = P^\infty = \emptyset$, so $I(G)$ is $m$-complete $\Sigma^0_3$, and the computable $\Sigma_3$ Scott sentence is best possible.

\begin{thm}

Suppose $P^0$ is infinite, $P^{fin}$ is infinite, and $P^\infty$ is finite. Then $G$ has a computable $\Sigma_3$ Scott sentence, and $I(G)$ is $m$-complete $\Sigma^0_3$.
\end{thm}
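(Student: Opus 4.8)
The plan is to dispatch the two assertions separately, since the first is essentially automatic. The existence of a computable $\Sigma_3$ Scott sentence follows immediately from Proposition \ref{sigma3desc}: as a subgroup of $\mathbb{Q}$, the group $G$ is a computable torsion-free abelian group of rank $1$, hence of finite rank, so that proposition applies verbatim. This gives that $I(G)$ is $\Sigma^0_3$, so the remaining work is entirely to establish $\Sigma^0_3$-hardness, after which $m$-completeness follows by combining the two bounds.

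For hardness I would reduce to Lemma \ref{sigma3hard}, whose hypothesis asks that $P^{fin}$ contain an infinite \emph{computable} subset $A$. The present hypothesis supplies only that $P^{fin}$ is infinite, so the first step is to argue that $P^{fin}$ is computably enumerable. This is where the finiteness of $P^\infty$ enters: we have already noted that $P^{fin} \cup P^\infty$ is $\Sigma^0_1$, and since $P^\infty$ is finite it is computable, so $P^{fin} = (P^{fin} \cup P^\infty) \setminus P^\infty$ is c.e., being the difference of a c.e. set and a computable set.

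The key step is then to pass from an infinite c.e. set to an infinite computable subset. Here I would invoke the standard fact that every infinite c.e. set contains an infinite computable subset: enumerating $P^{fin}$ and retaining the elements that exceed all previously listed ones yields a strictly increasing, hence computable, infinite subsequence. Taking $A$ to be such a subset of the infinite c.e. set $P^{fin}$, the hypotheses of Lemma \ref{sigma3hard} are satisfied, so $I(G)$ is $\Sigma^0_3$-hard. Together with the upper bound $I(G) \in \Sigma^0_3$ from the Scott sentence, this gives that $I(G)$ is $m$-complete $\Sigma^0_3$.

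The main obstacle is the middle step: in the previous theorem $P^{fin}$ was co-finite and therefore computable outright, whereas here $P^0$ is infinite, so $P^{fin}$ need not be computable. The point to recognize is that the finiteness of $P^\infty$ still forces $P^{fin}$ to be c.e., and its infinitude then furnishes an infinite computable subset on which to run the reduction underlying Lemma \ref{sigma3hard}. The hypothesis that $P^0$ is infinite is not used in the argument; it serves only to distinguish this case from the preceding theorem.
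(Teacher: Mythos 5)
Your proof is correct and follows essentially the same route as the paper: both deduce the upper bound from Proposition \ref{sigma3desc}, observe that $P^{fin} \cup P^\infty$ being $\Sigma^0_1$ together with the finiteness (hence computability) of $P^\infty$ makes $P^{fin}$ c.e., extract an infinite computable subset from the infinite c.e.\ set $P^{fin}$, and apply Lemma \ref{sigma3hard}. Your additional remarks---spelling out the increasing-enumeration argument for the computable subset and noting that the infinitude of $P^0$ is never used---are accurate but not a departure from the paper's argument.
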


\begin{proof}
We have a computable $\Sigma_3$ Scott sentence, so $I(G)$ is $\Sigma^0_3$.  We know that $P^{fin} \cup P^\infty$ is $\Sigma^0_1$. Since $P^\infty$ is finite, this means $P^{fin}$ itself is $\Sigma^0_1$.  Since $P^{fin}$ is also infinite, we can conclude that $P^{fin}$ has a computable infinite subset. So, by Lemma \ref{sigma3hard}, $I(G)$ is $\Sigma^0_3$-hard, and thus $m$-complete $\Sigma^0_3$. 
\end{proof}

These theorems give us some examples of groups for which the computable $\Sigma_3$ Scott sentence is best possible.  There are other examples with computable $d$-$\Sigma_2$ Scott sentences.  Here is a condition on $G$ that guarantees this.

\begin{lem}
\label{dsigma2desc}

Let $G$ be a computable subgroup of $ \mathbb{Q}$. If $P^{fin}$ is finite, and $P^\infty$ is computable, then $G$ has a computable $d$-$\Sigma_2$ Scott sentence.  

\end{lem}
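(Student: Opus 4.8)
The plan is to exploit the two hypotheses to pin down the isomorphism type of $G$ completely in terms of the single computable set $P^\infty$, and then to package this type into a conjunction of a computable $\Pi_2$ sentence with a computable $\Sigma_2$ sentence. Write $Q = P^\infty$. Since $Q$ is computable by assumption and the set of primes $P$ is computable, the complement $P \setminus Q = P^0 \cup P^{fin}$ is computable as well; this is what will let every infinitary conjunction below range over a computable (hence c.e.) index set, so that each formula is a genuine computable infinitary formula.

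Next I would record the structural fact that makes the problem tractable. Because $P^{fin}$ is finite, say $P^{fin} = \{p_1, \dots, p_r\}$ with $p_i^{k_i}$ the largest power of $p_i$ dividing $1$, the element $x = 1 / \prod_{i} p_i^{k_i}$ lies in $G$ (it is an integer combination of the elements $1/p_i^{k_i} \in G$) and has $p$-height $0$ for every prime $p \notin Q$, while retaining infinite $p$-height for every $p \in Q$. Conversely, any torsion-free abelian group of rank $1$ that is $p$-divisible for all $p \in Q$ and contains a nonzero element $x$ with $p \nmid x$ for all $p \notin Q$ must be isomorphic to $G$: embedding it in $\Q$ with $x$ in the role of $1$, the characteristic of $1$ is $\infty$ on $Q$ and $0$ off $Q$, and this characteristic determines the group. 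Thus the isomorphism type of $G$ depends only on $Q$, and it suffices to describe that type.

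With this in hand I would take the Scott sentence $\sigma = \sigma_{\Pi_2} \wedge \sigma_{\Sigma_2}$. Here $\sigma_{\Pi_2}$ is the conjunction of: the abelian group axioms (finitary $\Pi_1$); the torsion-free axiom $(\forall x)[x \ne 0 \rightarrow \bigdoublewedge_{n>0} nx \ne 0]$; the rank-at-most-one axiom $(\forall y_1)(\forall y_2)\bigdoublevee_{(a,b)\ne(0,0)} ay_1 + by_2 = 0$; and the $Q$-divisibility axioms $\bigdoublewedge_{p \in Q}(\forall y)(\exists z)\, pz = y$. Each conjunct is $\Pi_2$ (or lower), so $\sigma_{\Pi_2}$ is computable $\Pi_2$. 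For $\sigma_{\Sigma_2}$ I would take $(\exists x)[x \ne 0 \wedge \bigdoublewedge_{p \notin Q}(\forall z)\, pz \ne x]$, whose matrix is computable $\Pi_1$, so $\sigma_{\Sigma_2}$ is computable $\Sigma_2$ and $\sigma$ is computable $d$-$\Sigma_2$. To verify that $\sigma$ is a Scott sentence I would check that $G \models \sigma$, where $\sigma_{\Pi_2}$ is immediate and the witness for $\sigma_{\Sigma_2}$ is exactly the element $x$ built above using finiteness of $P^{fin}$; and that any model of $\sigma$ is torsion-free abelian of rank $\le 1$, made rank exactly $1$ by the existential witness, so that the structural fact of the previous paragraph forces it to be isomorphic to $G$.

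The main obstacle, and the point that dictates the shape of $\sigma$, is keeping the total complexity at $d$-$\Sigma_2$ rather than $\Sigma_3$. The naive description asks for an element divisible by all powers $p^k$ with $p \in Q$, but that divisibility condition is itself $\Pi_2$ and, placed under $(\exists x)$, yields $\Sigma_3$. The resolution is that infinite $Q$-divisibility is an isomorphism-invariant property of the \emph{whole} group, expressible as the $\Pi_2$ statement $(\forall y)(\exists z)\, pz = y$ that needs no existential witness, so it can be moved out into $\sigma_{\Pi_2}$; the existential conjunct then only has to assert the $\Pi_1$ condition that some nonzero element has height $0$ off $Q$. A second subtlety is that this single witness $x$ simultaneously rules out extra $p$-divisibility for $p \notin Q$ and forbids an infinite $P^{fin}$ in the model, so that one avoids the expensive conjunction over $p \notin Q$ of $\Sigma_2$ non-divisibility statements, which would otherwise push the sentence above $d$-$\Sigma_2$.
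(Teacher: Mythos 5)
Your proposal is correct and takes essentially the same route as the paper: the same conjunction of a computable $\Pi_2$ part (torsion-free abelian of rank $1$, plus divisibility of every element by each $p \in P^\infty$) with a computable $\Sigma_2$ part asserting a nonzero element not divisible by any $p \notin P^\infty$, with the same key insight that $P^\infty$-divisibility must be stated as a global $\Pi_2$ property rather than placed under the existential quantifier. Your explicit witness $x = 1/\prod_i p_i^{k_i}$ is exactly the paper's ``without loss of generality $P^{fin} = \emptyset$, by changing what element we consider to be $1$'' step made concrete.
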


\begin{proof} 

Let $G$ be such a group. Then it is clear that $G$ is isomorphic to a computable group with the same $P^\infty$ set but with $P^{fin} = \emptyset$. Such an isomorphism is obtained by changing what element we consider to be $1$. Therefore, without loss of generality, we may assume that $P^{fin} = \emptyset$. Using this fact, we form a computable Scott sentence for $G$ by taking the conjunction of the computable $\Pi_2$ stating that $G$ is a torsion-free abelian group of rank $1$ with the following computable $d$-$\Sigma_2$ sentence:

\[ (\forall y) \displaystyle\bigdoublewedge_{p \in P^\infty, k \in \omega} (p^k \mid y) \wedge (\exists x) \displaystyle\bigdoublewedge_{p \notin P^\infty} (p \nmid x) \]

The $\Pi_2$ part of the sentence above says that every element is infinitely divisible by those elements in $P^\infty$. The $\Sigma_2$ part of the sentence says that there exists an element $x$ such that for each $p$ not in $P^\infty$, $p\nmid x$. This $x$ is a witness for $1$ in $G$.  
\end{proof}

We can use the results above to conclude that certain $G$ have a $d$-$\Sigma^0_2$ Scott sentence that is optimal.  

\begin{thm}

Let $G \subseteq \mathbb{Q}$ be a computable subgroup, with $P^0$ infinite, $P^{fin}$ finite, and $P^\infty$ finite. Then $G$ has a computable $d$-$\Sigma_2$ Scott sentence, and $I(G)$ is $m$-complete $d$-$\Sigma^0_2$.
\end{thm}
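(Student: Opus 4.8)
The statement combines an upper bound and a matching lower bound, and both halves are essentially packaged in the two preceding lemmas; the plan is to verify that their hypotheses are met and then to patch the one boundary case that Lemma~\ref{dsigma2hard} does not cover.

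For the upper bound I would simply invoke Lemma~\ref{dsigma2desc}. Its hypotheses are that $P^{fin}$ is finite (given) and that $P^\infty$ is computable; since $P^\infty$ is finite by assumption, it is trivially computable. Hence $G$ has a computable $d$-$\Sigma_2$ Scott sentence, and by the remarks in the introduction this makes $I(G)$ a $d$-$\Sigma^0_2$ set. It then remains only to establish $d$-$\Sigma^0_2$-hardness, since hardness together with the $d$-$\Sigma^0_2$ upper bound yields $m$-completeness and hence optimality of the Scott sentence.

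For the hardness the first move is to apply Lemma~\ref{dsigma2hard}, whose hypotheses are $P^0\cup P^{fin}\ne\emptyset$ and $P^\infty\ne\emptyset$. The first holds because $P^0$ is infinite. The second holds whenever $P^\infty$ is finite but nonempty, in which case Lemma~\ref{dsigma2hard} gives that $I(G)$ is $d$-$\Sigma^0_2$-hard and we are finished. The genuine gap is the case $P^\infty=\emptyset$, where the lemma does not literally apply. Here I would observe that when $P^\infty=\emptyset$ and $P^{fin}$ is finite, the designated $1$ is divisible in $G$ by only finitely many prime powers; writing $M$ for the product of the maximal such prime powers, the element $1/M$ lies in $G$ and every element of $G$ is an integer multiple of it, so $G=\Z\cdot(1/M)\cong\Z$. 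The index set $I(\Z)$ is $m$-complete $d$-$\Sigma^0_2$ by Theorem~\ref{abcomp} (the case $G\cong\Z$, established in \cite{idxsets}), so hardness holds in this case as well.

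The step I expect to require the most care is precisely this boundary case: one must notice that Lemma~\ref{dsigma2hard} fails to apply and then correctly identify $G$ with $\Z$. If one prefers a uniform argument that avoids the case split, I would instead run the construction of Lemma~\ref{dsigma2hard} verbatim, but with \emph{two} primes $p\ne q$ drawn from the infinite set $P^0$ in place of one prime from $P^0\cup P^{fin}$ and one from $P^\infty$, building the three pairwise non-isomorphic subgroups in which $p$ (respectively neither, respectively $q$) becomes infinitely divisible; these are distinguished by their $P^\infty$ sets, which are isomorphism invariants. The only change is to the control logic for the second prime: since now $G$ is the ``doubly settled'' structure rather than one in which $q$ is already infinitely divisible, one divides by $q$ exactly when $n\in S_{1,s}\cap S_{2,s}$, so that the limit structure is the copy with $p$ infinitely divisible, $G$, or the copy with $q$ infinitely divisible according as $n\notin S_1$, $n\in S_1-S_2$, or $n\in S_1\cap S_2$. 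Reversibility of the transitions is handled by the same ``keep dividing / stop and redesignate $1$'' mechanism already used in Lemma~\ref{dsigma2hard}, with the two independent primes treated separately.
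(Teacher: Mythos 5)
Your main argument is correct and is essentially identical to the paper's own proof: Lemma~\ref{dsigma2desc} gives the upper bound (finite $P^\infty$ is computable), Lemma~\ref{dsigma2hard} gives hardness when $P^\infty\neq\emptyset$, and the boundary case $P^\infty=\emptyset$ is handled exactly as in the paper by noting $G\cong\Z$ and invoking Theorem~\ref{abcomp} (whose proof of the $\Z$ case defers to \cite{idxsets}). Your explicit verification that $G\cong\Z$ via the element $1/M$ is a detail the paper leaves implicit, and it is right.

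The optional ``uniform'' alternative, however, does not work as stated. In Lemma~\ref{dsigma2hard} the three targets form a chain $K\subseteq G\subseteq H$, and this monotonicity is what makes the approximation errors harmless: $p$-divisions occur only at stages with $n\notin S_{1,s}$, and when the true outcome is $n\notin S_1$ the limit target $H$ is the structure of \emph{maximal} divisibility, so any extra $q$-divisions performed during oscillations are absorbed ($H$ also has $q\in P^\infty$). In your variant both alternative structures have strictly \emph{more} divisibility than $G$ at their respective primes, and the monotonicity is lost. Concretely, suppose $n\notin S_1$ but the approximations satisfy $n\in S_{1,s}\cap S_{2,s}$ for infinitely many $s$; this is entirely possible for $\Sigma^0_2$ approximations, since $n\notin S_1$ only forces $n\notin S_{1,s}$ for infinitely many $s$, not cofinitely many. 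Then your rule ``divide by $q$ exactly when $n\in S_{1,s}\cap S_{2,s}$'' performs infinitely many $q$-divisions. Redesignating the unit cannot undo divisibility facts already enumerated into the diagram: each old unit is an integer multiple of the new one, so the $q$-divisions chain, and the originally designated element acquires infinite $q$-height in the limit. The limit group then contains an element infinitely divisible by both $p$ and $q$, hence is not isomorphic to the intended target $H_p$, in which every element has finite $q$-height. Repairing this requires either restoring a chain structure among the three targets (which is precisely what the paper's choice of $H\supseteq G\supseteq K$ accomplishes, and why it draws one prime from $P^0\cup P^{fin}$ and one from $P^\infty$) or a genuinely more delicate control of when $q$-divisions are permitted; it is not merely a matter of treating the two primes ``separately.'' Since your case-split argument is complete on its own, the theorem is proved, but the alternative should be dropped or reworked.
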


\begin{proof}

First, we show that $G$ has a computable $d$-$\Sigma_2$ Scott sentence.  Since $P^\infty$ is finite, it is computable, so by Lemma \ref{dsigma2desc}, we have a computable $d$-$\Sigma_2$ Scott sentence.  Then $I(G)$ is $d$-$\Sigma^0_2$.  To show hardness, first we assume that $P^\infty$ is nonempty, and we refer back to Lemma \ref{dsigma2hard}.  Since $P^0$ is infinite and thus nonempty, we can conclude that $I(G)$ is $d$-$\Sigma^0_2$-hard.  If $P^\infty = \emptyset$, then 
$G \cong \mathbb{Z}$, which is a finitely generated abelian group. By Theorem \ref{abcomp}, $I(G)$ is $m$-complete $d$-$\Sigma^0_2$.
\end{proof}

\begin{thm}

Suppose $P^0$ and $P^{fin}$ are finite, and $P^\infty$ is infinite but not all of $P$. Then $G$ has a computable $d$-$\Sigma_2$ Scott sentence, and $I(G)$ is $m$-complete $d$-$\Sigma^0_2$.
\end{thm}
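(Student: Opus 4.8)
The plan is to recognize this theorem as a direct combination of the two lemmas already established in this subsection, namely Lemma \ref{dsigma2desc} (existence of a computable $d$-$\Sigma_2$ Scott sentence) and Lemma \ref{dsigma2hard} ($d$-$\Sigma^0_2$-hardness of $I(G)$). The entire argument reduces to verifying that the hypotheses of each lemma are met under the standing assumptions that $P^0$ and $P^{fin}$ are finite while $P^\infty$ is infinite but not all of $P$. Since a $d$-$\Sigma^0_2$ upper bound together with $d$-$\Sigma^0_2$-hardness yields $m$-completeness, no further machinery is needed.

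First I would establish the upper bound. To invoke Lemma \ref{dsigma2desc} I need $P^{fin}$ finite and $P^\infty$ computable. The former is a hypothesis. For the latter, observe that $P^0 \cup P^{fin}$ is finite, being a union of two finite sets, so its complement $P^\infty$ is co-finite and hence computable. Lemma \ref{dsigma2desc} then supplies a computable $d$-$\Sigma_2$ Scott sentence for $G$, and by the general principle relating the complexity of a Scott sentence to that of the index set (recorded in the introduction), $I(G)$ is $d$-$\Sigma^0_2$.

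Next I would establish hardness via Lemma \ref{dsigma2hard}, whose hypotheses are $P^0 \cup P^{fin} \ne \emptyset$ and $P^\infty \ne \emptyset$. The second holds because $P^\infty$ is infinite. The first holds precisely because $P^\infty$ is assumed not to be all of $P$: any prime in $P \setminus P^\infty$ lies in $P^0 \cup P^{fin}$, so that union is nonempty. Lemma \ref{dsigma2hard} then gives that $I(G)$ is $d$-$\Sigma^0_2$-hard. Combining this with the upper bound, $I(G)$ is $m$-complete $d$-$\Sigma^0_2$, and the Scott sentence from Lemma \ref{dsigma2desc} is optimal.

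As for the main obstacle, there is essentially none beyond the bookkeeping above, since the substantive content resides in the two lemmas. The only point requiring care is the computability of $P^\infty$: in general it is merely $\Pi^0_2$, but here the finiteness of $P^0$ and $P^{fin}$ forces $P^\infty$ to be co-finite, and hence computable, which is exactly the hypothesis Lemma \ref{dsigma2desc} demands. The clause ``not all of $P$'' is likewise doing real work: it is precisely what guarantees the nonemptiness condition needed for the hardness lemma, and without it one would be in the degenerate case $G \cong \mathbb{Q}$, where every prime divides $1$ infinitely and a simpler description is available.
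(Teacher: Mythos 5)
Your proposal is correct and follows essentially the same route as the paper: the paper likewise obtains hardness from Lemma \ref{dsigma2hard} via $P^\infty \ne P \Rightarrow P^0 \cup P^{fin} \ne \emptyset$, and the upper bound from Lemma \ref{dsigma2desc} after noting that the finiteness of $P^0$ and $P^{fin}$ makes $P^\infty$ computable. Your verification of the hypothesis $P^\infty \ne \emptyset$ (which the paper leaves implicit) and your closing remark about the degenerate case $P^\infty = P$, $G \cong \mathbb{Q}$, match the paper's own follow-up observation.
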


\begin{proof}

If $P^\infty \ne P$, then $P^0 \cup P^{fin} \ne \emptyset$, so we can apply Lemma 2 to conclude that $I(G)$ is $d$-$\Sigma^0_2$-hard.  Since $P^0$ and $P^{fin}$ are finite, they are computable, so we can conclude that $P^\infty$ is also computable. Thus, by Lemma \ref{dsigma2desc}, $G$ has a computable $d$-$\Sigma_2$ Scott sentence, and $I(G)$ is $d$-$\Sigma_2$. So $I(G)$ is $m$-complete $d$-$\Sigma^0_2$.
\end{proof}

Note that if $P^\infty = P$, then $G \cong \mathbb{Q}$.  In \cite{idxsets}, it was shown that this group has a computable $\Pi_2$ Scott sentence, and $I(G)$ is $m$-complete $\Pi^0_2$.

\bigskip

Our results on subgroups of $\mathbb{Q}$ are summarized in the following table.

\begin{center}
\begin{tabular}{|c|c|c|c|}
\hline
Case & Description & Lower Bound & Upper Bound \\
\hline
1 & $P_0$ is infinite, $P_{fin}$ is finite, and $P_\infty$ is finite & $d$-$\Sigma^0_2$ & $d$-$\Sigma^0_2$ \\
\hline
2 & $P_0$ is finite, $P_{fin}$ is infinite, and $P_\infty$ is finite & $\Sigma^0_3$ & $\Sigma^0_3$ \\
\hline
3 & $P_0$ is finite, $P_{fin}$ is finite, and $P_\infty$ is infinite & $d$-$\Sigma^0_2$ & $d$-$\Sigma^0_2$ \\
\hline
4 & $P_0$ is finite, $P_{fin}$ is infinite, and $P_\infty$ is infinite & $d$-$\Sigma^0_2$ & $\Sigma^0_3$ \\
\hline
5 & $P_0$ is infinite, $P_{fin}$ is finite, and $P_\infty$ is infinite & $d$-$\Sigma^0_2$ & $\Sigma^0_3$ \\
\hline
6 & $P_0$ is infinite, $P_{fin}$ is infinite, and $P_\infty$ is finite & $\Sigma^0_3$ & $\Sigma^0_3$ \\
\hline
7 & $P_0$ is infinite, $P_{fin}$ is infinite, and $P_\infty$ is infinite & $d$-$\Sigma^0_2$ & $\Sigma^0_3$\\
\hline
\end{tabular}
\end{center}

\bigskip

We have completeness results for Cases (1), (2), (3), and (6), but the other cases remain open. There are some interesting open questions. We can show $\Sigma^0_3$ $m$-completeness of $I(G)$ if we know that $P^{fin}$ has an infinite computable subset, but what if this is not the case? That is, what if $P^{fin}$ is \emph{immune}? Since $P^{fin} \cup P^\infty$ is always $\Sigma^0_1$, we know it always has a computable infinite subset, but unfortunately we cannot generalize our proof technique, because we need to be able to distinguish between elements of $P^{fin}$ and elements of $P^\infty$.\footnote{There are recent results of Ho \cite{Ho} that give further information.}

Let $X \subseteq \N$ be computable from the halting set $\emptyset'$, or $X \leq_T \emptyset'$. Then recall that $X$ is \emph{low} if its jump $X'$ is Turing equivalent to $\emptyset '$, and \emph{high} if $X'$ is Turing equivalent to $\emptyset ''$. Similarly, $X$ is called \emph{$2$-high} if $X'' \equiv_T \emptyset ^{(3)}$. Then we have the following theorem.
 
\begin{prop}

Suppose $G$ is a computable subgroup of $\mathbb{Q}$, with $P^{fin} = \emptyset$ and $P^{\infty} = X$ (it follows that $X$ is c.e.).    
Then $G$ has a Scott sentence that is the conjunction of a computable $\Pi_2$ sentence and an ``$X$-computable'' $\Sigma_2$ sentence---there is a conjunction c.e.\ relative to $X$, but not c.e. Furthermore: 

\begin{enumerate}
\item  If $X$ is low, then $I(G)$ is $d$-$\Sigma^0_2$.
\item  If $X$ is not $2$-high, then $I(G)$ is not $m$-complete $\Sigma^0_3$.  
\end{enumerate}
\end{prop}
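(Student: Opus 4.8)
The plan is to reuse the Scott sentence from Lemma~\ref{dsigma2desc}, track exactly where the noncomputability of $X$ enters, and then read the complexity of $I(G)$ off the complexities of satisfaction for the two conjuncts. Since $P^{fin}=\emptyset$, every prime lies either in $P^0$ or in $P^\infty=X$, and we may take the designated $1$ to be divisible to infinite order by exactly the primes of $X$ and not at all by the primes of $P^0$. As in Lemma~\ref{dsigma2desc}, I would take the Scott sentence to be the conjunction of the computable $\Pi_2$ axioms for a torsion-free abelian group of rank $1$ with $\sigma_{\Pi_2}\wedge\sigma_{\Sigma_2}$, where
\[ \sigma_{\Pi_2} = (\forall y)\bigdoublewedge_{p\in X,\,k\in\omega}(p^k\mid y) \quad\text{and}\quad \sigma_{\Sigma_2} = (\exists x)\bigdoublewedge_{p\notin X}(p\nmid x). \]
The conjunction $\sigma_{\Pi_2}$ ranges over the c.e.\ set $X$, so it is computable $\Pi_2$; the conjunction $\sigma_{\Sigma_2}$ ranges over $P\setminus X$, which is co-c.e.\ and hence computable from $X$ but (when $X$ is noncomputable) not c.e. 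Thus $\sigma_{\Sigma_2}$ is $X$-computable $\Sigma_2$ and not computable $\Sigma_2$, which gives the first assertion.

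For part (1), I would compute the complexity of $I(G)$ from the decomposition $e\in I(G)$ iff $\mathcal{B}_e$ is an $L$-structure (a $\Pi^0_2$ condition, namely $e\in I(Mod(L))$) satisfying $\sigma_{\Pi_2}\wedge\sigma_{\Sigma_2}$. By the relativized form of the uniform satisfaction facts cited from \cite{AK}, the set $\{e:\mathcal{B}_e\models\sigma_{\Pi_2}\}$ is $\Pi^0_2$, while $\{e:\mathcal{B}_e\models\sigma_{\Sigma_2}\}$ is $\Sigma^0_2(X)=\Sigma^0_1(X')$ by the relativized Post theorem. When $X$ is low we have $X'\equiv_T\emptyset'$, so $\Sigma^0_1(X')=\Sigma^0_1(\emptyset')=\Sigma^0_2$. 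Intersecting the two $\Pi^0_2$ conditions with this $\Sigma^0_2$ condition exhibits $I(G)$ as the intersection of a $\Sigma^0_2$ set with a $\Pi^0_2$ set, i.e.\ as a $d$-$\Sigma^0_2$ set, proving (1).

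For part (2), the key observation is that $I(G)\leq_T X''$. Indeed, the two $\Pi^0_2$ conditions are computable from $\emptyset''\leq_T X''$, and the $\Sigma^0_2(X)=\Sigma^0_1(X')$ condition is c.e.\ in $X'$, hence computable from $X''$. Now suppose toward a contradiction that $I(G)$ is $m$-complete $\Sigma^0_3$. Then the $\Sigma^0_3$ set $\emptyset^{(3)}$ satisfies $\emptyset^{(3)}\leq_m I(G)\leq_T X''$, so $\emptyset^{(3)}\leq_T X''$. Since $X$ is c.e.\ we have $X\leq_T\emptyset'$, whence $X''\leq_T\emptyset^{(3)}$ by monotonicity of the jump; combining, $X''\equiv_T\emptyset^{(3)}$, i.e.\ $X$ is $2$-high. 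Contrapositively, if $X$ is not $2$-high then $I(G)$ is not $m$-complete $\Sigma^0_3$.

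The routine but load-bearing ingredients are the relativized satisfaction bounds---that satisfaction of the $X$-computable $\Sigma_2$ conjunct is exactly $\Sigma^0_2(X)$ with full uniformity---together with the identifications $\Sigma^0_2(X)=\Sigma^0_1(X')$ and the characterization of the class $d$-$\Sigma^0_2$ as the intersections $A\cap B$ with $A\in\Sigma^0_2$ and $B\in\Pi^0_2$. The one genuinely delicate point, and what I expect to be the main obstacle, is the Turing bound $I(G)\leq_T X''$ in part (2): everything there hinges on the fact that an $m$-reduction is in particular a Turing reduction, so that $\Sigma^0_3$-completeness of $I(G)$ would push the $\Sigma^0_3$-complete set $\emptyset^{(3)}$ below $X''$ and force $X$ to be $2$-high. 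Keeping the complexities of the two conjuncts cleanly separated---so that the noncomputable data $X$ enters only through the single $\Sigma^0_2(X)$ condition---is the crux of both parts.
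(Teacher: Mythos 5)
Your proposal is correct and follows essentially the same route as the paper's own proof: the same Scott sentence (computable $\Pi_2$ rank-one axioms and divisibility by primes in $X$, plus the $X$-computable $\Sigma_2$ existence of an element not divisible by primes outside $X$), the same use of lowness to collapse the $\Sigma^0_2(X)$ satisfaction condition to $\Sigma^0_2$ for part (1), and the same Turing-degree argument $\emptyset^{(3)}\leq_m I(G)\leq_T X''$ for part (2). Your write-up is slightly more explicit than the paper in noting that $X\leq_T\emptyset'$ gives $X''\leq_T\emptyset^{(3)}$, so that the conclusion is exactly $2$-highness, but this is the same argument.
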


\begin{proof}

We have a Scott sentence that is the conjunction of the following:

\begin{enumerate}
\item  a computable $\Pi_2$ sentence describing the torsion-free abelian groups of rank~$1$,
\item  a computable $\Pi_2$ sentence saying that for all $p\in X$, all elements are divisible by $p$,
\item  an $X$-computable $\Sigma_2$ sentence saying that there exists $x$ such that $x$ is not divisible by any 
$p\notin X$. 
\end{enumerate}

Let $\varphi$ be the conjunction of the first two parts---this is computable $\Pi_2$.  The set of computable indices for groups satisfying $\varphi$ is $\Pi^0_2$.  Let $\psi$ be the third part---this is $X$-computable $\Sigma_2$.  It follows that the set of indices for computable groups satisfying $\psi$ is $\Sigma^0_2$ relative to $X$.  If $X$ is low, then this is $\Sigma^0_2$.  Now, $I(G)$ is the intersection of the $\Pi^0_2$ set of indices for groups satisfying $\varphi$ with the $\Sigma^0_2$ set of indices satisfying $\psi$, so it is $d$-$\Sigma^0_2$.  If $I(G)$ is $m$-complete $\Sigma^0_3$, then since $\emptyset^{(3)}$ is $\Sigma^0_3$, we would have 
$\emptyset^{(3)}\leq_m I(G)$, so $\emptyset^{(3)}\leq_T I(G)$.  We have seen that $I(G)$ is the intersection of a $\Pi^0_2$ set and a set that is $\Sigma^0_2$ relative to $X$.  It follows that $I(G)\leq_T X^{''}$.  If $X$ is not $2$-high, then $X^{''}$ is strictly below $\emptyset^{(3)}$ in Turing degree.      
\end{proof}

In the case where $X = P^\infty$ is low, we have shown that $I(G)$ is $d$-$\Sigma^0_2$, but we have been unable to find a computable $d$-$\Sigma_2$ Scott sentence.\footnote{In \cite{KMc}, it is shown that this group does not have a computable $d$-$\Sigma_2$ Scott sentence.}   
  
\section{Conclusion}

In the original version of this paper (written in 2013), we asked whether every finitely generated computable group $G$ has a computable $d$-$\Sigma_2$ Scott sentence. This was recently shown to be false by Harrison-Trainor and Ho \cite{HH}, who constructed an example of a finitely generated group for which a computable $\Sigma_3$ Scott sentence is optimal. However, their example was not finitely presented, so one may further ask whether every finitely \emph{presented} computable group has a computable $d$-$\Sigma_2$ Scott sentence. There is evidence for a positive answer in the earlier paper \cite{freegroups}, and in Section 2.  There is further evidence in more recent work by Raz \cite{raz} and Ho \cite{Ho}.          

\begin{conj}
\label{fingen}

Every finitely presented computable group $G$ has a computable $d$-$\Sigma_2$ Scott sentence.   

\end{conj}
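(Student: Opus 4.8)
The plan is to descend from the computable $\Sigma_3$ Scott sentence of Proposition~\ref{Sigma_3} to a computable $d$-$\Sigma_2$ one by reusing the template that worked for the free groups and for $D_\infty$. Fix a finite presentation $\la \bar a \mid R_1,\dots,R_m\ra$ of $G$ with $\bar a$ a generating tuple of minimal length $n$, and take $\sigma$ to be the conjunction of (i) the group axioms, which are finitary $\Pi_1$, (ii) a computable $\Pi_2$ sentence asserting that every tuple is generated by some $n$-tuple satisfying the defining relators, and (iii) a computable $\Sigma_2$ sentence asserting the existence of an $n$-tuple $\bar x$ satisfying exactly the relations of $\bar a$ such that $\bar w(\bar y)\ne\bar x$ for every $n$-tuple $\bar y$ and every \emph{imprimitive} tuple of words $\bar w$. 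The verification that $\sigma$ is a Scott sentence would run exactly as in the free and dihedral cases: part (iii) pins down the orbit of $\bar a$ as a rigid generating tuple, and part (ii) forces the ambient group to collapse onto $\la\bar x\ra\cong G$. Equivalently, by the criterion of Alvir, Knight, and McCoy \cite{AKM}, it suffices to show that the orbit of $\bar a$ under $\mathrm{Aut}(G)$ is defined by a \emph{computable} $\Pi_1$ formula.

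First I would record a positive fact that holds for \emph{any} finitely presented computable group: the set of \emph{primitive} tuples of words is computably enumerable. Indeed, $\bar w$ is primitive exactly when $\bar w(\bar a)$ lies in the orbit of $\bar a$, and this happens if and only if there is a second tuple of words $\bar v$ such that, evaluated at $\bar a$, the four finite systems $R_j(\bar w(\bar a))=e$, $R_j(\bar v(\bar a))=e$, $v_i(\bar w(\bar a))=a_i$, and $w_i(\bar v(\bar a))=a_i$ all hold. The first two conditions say that $\bar w$ and $\bar v$ induce endomorphisms $\varphi,\psi$ of $G$, and the last two say that $\varphi\psi$ and $\psi\varphi$ fix $\bar a$ and hence are the identity; thus $\varphi$ is an automorphism carrying $\bar a$ to $\bar w(\bar a)$. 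Here the finiteness of the relator set makes the first two systems finite, and solvability of the word problem makes each of the four systems decidable, so searching over $\bar v$ enumerates the primitive tuples. This supplies the computable $\Sigma_2$ lower half of the desired description, and it is the analogue of the ``easy'' half of Nielsen's Theorem~\ref{prim}.

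The main obstacle is the \emph{complementary} enumeration: to make part (iii) a genuinely computable $\Sigma_2$ sentence I need the set of imprimitive word tuples to be computably enumerable as well---equivalently, primitivity must be \emph{decidable}, so that the complement of the orbit of $\bar a$ is definable by a computable $\Sigma_1$ formula. This is precisely what Nielsen's Theorem~\ref{prim} provides for free groups and what Benois' Proposition~\ref{Benois} provides for $D_\infty$, and it is where the argument genuinely uses the geometry of the group rather than mere solvability of the word problem. The difficulty is that a tuple $\bar b$ failing to lie in the orbit for the ``bad'' reason---$\bar b$ satisfies exactly the relations of $\bar a$ yet does not generate $G$---is a non-generation phenomenon, and for a general finitely presented group the generation problem, together with the possible failure of Hopficity, need not be decidable, so non-generation need not be existentially witnessable. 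Thus the crux is to show that for finitely presented groups this bad case can always be detected effectively; one would try to do this by analogy with Nielsen's length-reduction algorithm, bounding the size of a primitivity certificate in terms of the word lengths of $\bar w$, perhaps exploiting small-cancellation or automatic structure as in the groups treated by Kapovich and Schupp (Property~\ref{p1}) and in the more recent work of Raz \cite{raz} and Ho \cite{Ho}. I expect this decidability-of-primitivity step to be the true heart of the matter, and it is exactly the point at which the statement remains a conjecture rather than a theorem.
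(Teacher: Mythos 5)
You were asked to prove Conjecture \ref{fingen}, which the paper explicitly leaves open: there is no proof in the paper to compare against, only the surrounding evidence (the free groups of \cite{freegroups}, $D_\infty$, and the later work of Raz \cite{raz} and Ho \cite{Ho}), together with the Harrison-Trainor--Ho counterexample \cite{HH} showing the analogous claim is \emph{false} for finitely generated groups that are not finitely presented. Your proposal, as you yourself say in the final paragraph, is not a proof: everything reduces to the set of imprimitive word tuples being computably enumerable, i.e.\ to primitivity being decidable, and neither finite presentability nor solvability of the word problem is known to give this. By the Alvir--Knight--McCoy criterion you invoke, a computable $d$-$\Sigma_2$ Scott sentence exists if and only if the orbit of a generating tuple is defined by a computable $\Pi_1$ formula, so the missing enumeration is not a technical step in your argument---it essentially \emph{is} the conjecture. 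Your positive lemma, on the other hand, is correct and worth keeping: for a finitely presented computable group, $\bar{w}$ is primitive iff there is a word tuple $\bar{v}$ making the four finite, decidable systems hold (the relator conditions make $\bar{w}$ and $\bar{v}$ induce endomorphisms, and the mixed conditions force these to be mutually inverse automorphisms), so the primitive tuples are c.e.; this is the correct analogue of the easy halves of Theorem \ref{prim} and Proposition \ref{Benois}, and it is why Ho's sufficient condition (a computable $\Sigma_2$ definition of a nonempty set of generating tuples) is the natural intermediate target.

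There is, however, a concrete flaw in the template beyond the acknowledged gap: even granting decidability of primitivity, your sentence $\sigma$ need not be true in $G$, because the witness verification secretly requires a rigidity property in the spirit of Property \ref{p1}, or at least Hopficity. Suppose $G$ is non-Hopfian, with a surjective non-injective endomorphism $h$, and let $\bar{y} = h(\bar{a})$. Then $\bar{y}$ satisfies the relators and generates $G$, so $\bar{a} = \bar{u}(\bar{y})$ for some word tuple $\bar{u}$; and $\bar{u}$ must be imprimitive, since if $\bar{u}(\bar{a}) = \alpha(\bar{a})$ for an automorphism $\alpha$, then $h\alpha$ fixes $\bar{a}$, hence $h\alpha = \mathrm{id}$ and $h$ is injective, a contradiction. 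Thus the imprimitive $\bar{u}$ and the tuple $\bar{y}$ falsify clause (iii) at the witness $\bar{x} = \bar{a}$ (and the same argument works at any witness in the orbit). Since there are finitely presented computable non-Hopfian groups---$BS(2,3) = \la a,t \; | \; ta^2t^{-1} = a^3 \ra$ is finitely presented, has solvable word problem as a one-relator group, and is non-Hopfian---your proposed sentence is simply not a Scott sentence for such $G$, no matter how the enumeration issue is resolved. (In the free case this step is saved by the fact that every generating $n$-tuple of $F_n$ is a basis, and in the dihedral case by Lemma \ref{lem2}; replacing the relator condition on $\bar{y}$ by ``satisfies exactly the relations of $\bar{a}$'' repairs the verification but turns the clause into a computable $\Sigma_3$ sentence, landing you back at Proposition \ref{Sigma_3}.) So any successful attack on the conjecture must either handle non-Hopfian finitely presented groups by a genuinely different sentence, or use their structure in a way this template cannot.
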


In Section 3, we proved completeness results for the index sets of a number of different classes of subgroups of $\mathbb{Q}$. But there were also a number of different classes for which we merely provided upper and lower bounds.  In a version of this paper circulated earlier, we asked for more precise results for $G\subseteq\mathbb{Q}$ in the case where $P^0$ and $P^\infty$ both infinite, but $P^{fin} = \emptyset$.  We mentioned already a result in \cite{KMc} for the case where $P^\infty$ is a low non-computable c.e.\ set.  Ho \cite{Ho} gave results for the case where $P^\infty$ is high, in particular, when it is equal to the halting set.    


\newpage

%
%

\end{document}